\documentclass[12pt,a4paper]{scrartcl}

\usepackage[utf8]{inputenc}
\usepackage[british]{babel}
\usepackage{amsmath,amsthm,amssymb,amsfonts}
\usepackage{mathtools}
\mathtoolsset{centercolon}
\usepackage{hyperref}
\usepackage{float,caption}
\usepackage[nameinlink, noabbrev, capitalise]{cleveref}
\usepackage{geometry,enumitem}
\usepackage{algorithm}
\usepackage{algpseudocode}

\usepackage{dsfont}
\usepackage{textpos}
\usepackage{thm-restate}
\usepackage{titling}
\usepackage{soul}
\usepackage{xcolor}

\usepackage{macros}
\usepackage{tikz}
\usetikzlibrary{shapes.geometric,hobby,calc}
\usetikzlibrary{shapes,decorations}
\usetikzlibrary{decorations.markings,intersections}
\usepackage{tikzmacros}

\makeatletter
\@addtoreset{claim}{theorem}
\makeatother

\hypersetup{
	colorlinks=true,
	linkcolor=myBlue,
	citecolor=myBlue,
	urlcolor=myLightBlue,
	bookmarksopen=true,
	bookmarksnumbered,
	bookmarksopenlevel=2,
	bookmarksdepth=3
}

\newif\ifcomment
\commentfalse
\commenttrue

\title{Torsoids in Path-Like Graphs without nontrivial even $2$-separation}
\author{Nathan Bowler
	\and 
	Florian  Reich
	\and
	Qiuzhenyu Tao
}

\preauthor{}
\DeclareRobustCommand{\authorthing}{
	\begin{center}
		Nathan Bowler -- Universität Hamburg, Germany\\
		\href{mailto:nathan.bowler@uni-hamburg.de}{nathan.bowler@uni-hamburg.de}
		
		\medskip
		Florian Reich -- Universität Hamburg, Germany\\
		\href{mailto:florian.reich@uni-hamburg.de}{florian.reich@uni-hamburg.de}
		
		\medskip
		Qiuzhenyu Tao -- Universität Hamburg, Germany\\
		\href{mailto:qiuzhenyu.tao@uni-hamburg.de}{qiuzhenyu.tao@uni-hamburg.de}
\end{center}}
\author{\authorthing}
\postauthor{}    

\begin{document}
	\maketitle

\begin{abstract}
    Bowler et al. introduced the concept of torsoids that describes the 1-separations of a directed graph in a canonical way building on previous progress by Lov{\'a}sz.
    To fully understand the structure of directed graphs with respect to their 1-separations it remains to find a global structure along which the torsoids are arranged.

    In this paper, we start the investigation of this global structure for a specific class of directed graphs.

    \textbf{Keywords:} directed separations, directed graphs, decomposition, torsoids, perfect matching
\end{abstract}
	
	\section{Introduction}
	An unordered pair $\{A,B\}$ is a \textit{$k$-separation} of an undirected graph $G$ if $A \cup B = V(G)$, $|A\cap B|=k$, and there exists no edge between $A \setminus B$ and $B \setminus A$. 
	Graph separations are a fundamental concept in graph theory that has captivated mathematicians for decades.
	For small values of $k$, canonical combinatorial tree structures exist that describe the $k$-separations of a $k$-connected undirected graph and their relationships.
	Specifically, for $k = 0$, graphs trivially decompose into their connected components; for $k = 1$, the block-cut decomposition applies \cite[Chapter 3.1]{diestel17}; and for $k = 2$, this decomposition is known as the Tutte decomposition \cite{Tutte66}. Recent work by Carmesin and Kurkofka has extended this line of research to $k = 3$ \cite{Carmesin23}.

    In the context of directed graphs however, edges between $A \setminus B$ and $B \setminus A$ are permitted in one direction.
    Therefore, the structure of separations in directed graphs is more complex than in undirected graphs.
    For $k=0$ the resulting decomposition~\cite[Chapter 1.5]{BG2018digraph} forms a directed acyclic graph.
    Already for $k = 1$, the structure of $1$-separations is not fully understood. For many years, our understanding was limited to a partial result by Lov{\'a}sz \cite{Lovasz1987}.
    Recently, Bower et al.~\cite{bowler2023decomposition} improved Lov{\'a}sz result by representing the $1$-separations of a directed graph in a canonical way, called \emph{torsoids}.
    However, to fully understand the structure of $1$-separations, it remains to find a global structure along which the torsoids are arranged.
    In this paper, we start the investigation of such a global structure.
    
    As Lov{\'a}sz and Bowler et al., we work in the setting of \emph{matching covered graphs}, i.e.\ connected undirected graphs in which every edge is covered by some perfect matching.
    This is motivated by the well-established correspondence between strongly connected directed graphs and bipartite matching-covered graphs \cite{mccuaig2000evendicycles, rst1999pfaffianorientations}, which links directed $1$-separations to tight sets.
    
    Indeed, a torsoid in a matching covered graph $G$ is an equivalence class of tight set partitions of $G$.
    A \emph{tight set partition} of $G$ is a partition of $V(G)$ in which every partition class is a tight set.
    Furthermore, given a tight set partition $\cP$, let $\collapse{\cP}$ denote the graph obtained by contracting each tight set in $\cP$ and deleting any parallel edges resulting from this process.
    Note that, given a torsoid $\cT$, there is either a cycle $H$ of even length $k \geq 4$ or a noncyclic graph $H$ without nontrivial tight cuts such that $\collapse{\cP} = H$ for all tight set partitions $\cP$ corresponding to $\cT$.

    Bowler et al.~\cite{bowler2023decomposition} showed that torsoids are related to the torsos of a given maximal family of nested tight cuts $\cC$.    
    Two cuts in $\cC$ are said to be \emph{nested} if they do not cut through each other.
    A \emph{maximal star} of $\cC$ is a tight set partition $\cP$ of size at least $4$ whose cuts are $\cC$ such that $\collapse{\cP}$ has no nontrivial tight cuts, and we call $\collapse{\cP}$ a \emph{torso} of $\cC$.
    If $\collapse{\cP}$ is a $C_4$, then we call it a \emph{$C_4$-torso of $\cC$ at $\cP$}, or simply a \emph{$C_4$-torso}.
    Furthermore, $\collapse{\cP}$ is called a \emph{$C_4^i$-torso} if there are $i$ singleton tight sets in $\cP$ that are adjacent in $\collapse{\cP}$.



	
    Due to the complexity of finding a global structure along which the torsoids are arranged, we focus on matching covered graphs with a special maximal family of nested tight cuts $\cC$.
    More precisely, we consider graphs with a maximal family of nested tight cuts $\cC$ such that exactly two torsos of $\cC$ are $C_4^3$-torsos and all other torsos of $\cC$ are $C_4^2$-torsos. We refer to such graphs as \emph{path-like}.
    In this paper we find the desired global structure for path-like graphs without even $2$-separation (i.e., a $2$-separation where each proper side is nonempty and contains an even number of vertices).
    In the second paper of this series, we will generalize our result to arbitrary path-like graphs.

    The structure of this paper is as follows. We begin by introducing basic concepts in \Cref{sec:pre}. In \Cref{sec:path-like}, we define path-like graphs and establish their fundamental structural properties. In \Cref{sec:sourcesink}, we present a source-sink model for path-like graphs without nontrivial even $2$-separation that contain a distinguished edge, and we show how this model directly captures all torsoids and their interactions. In \Cref{sec:torsoidsG}, we extend this approach to path-like graphs without nontrivial even $2$-separation that do not contain a distinguished edge.

	\section{Preliminaries}\label{sec:pre}
    We follow~\cite{diestel17,BG2018digraph} for basic concepts and notations.
    A directed path from $u$ to $v$ is called a \textit{directed $u$--$v$ path}. We use $\operatorname{dist}(uv)$ to denote the number of edges in a shortest directed $u$--$v$ path.
    For two disjoint sets $X$ and $Y$ in a graph, $E(X,Y)$ denotes the set of edges with one endpoint in $X$ and the other in $Y$. 

    \begin{definition}[Directed Separation]\label{di-separation}
		Let $D$ be a directed graph. A tuple $(A, B)$ with $A, B \subseteq V (D)$ and $A \cup B = V(D)$ is a \emph{directed separation} of $D$ if there is no edge with tail in $A \setminus B$ and head in $B \setminus A$. The integer $k \coloneqq |A\cap B|$ is called the \emph{order} of the separation and we also refer to $(A, B)$ as a \emph{directed $k$-separation}.
	\end{definition}
    
    We refer to $A$ and $B$ as the \emph{sides} of the (directed) separation. The \emph{separator} of $(A,B)$ is the vertex set $A\cap B$.
    Moreover, for the directed $k$-separation $(A,B)$, we call $A \cap B$ the \emph{out-$k$-separator} of $A$ and the \emph{in-$k$-separator} of $B$.
    
    A separation $(A,B)$ is said to be \textit{trivial} if either $A \subseteq B$ or $B \subseteq A$, and \textit{decisive} if $|A \setminus B|, |B \setminus A| \geq 2$.
    Furthermore, we call a $(A,B)$ \emph{even} if $A\setminus B$ and $B \setminus A$ contain an even number of elements.
    Note that every nontrivial even separation is also decisive.

    
	Let $G$ be a bipartite graph with bipartition $\{V_1,V_0\}$ and let $M$ be a perfect matching of $G$.
    A path in $G$ is called an \emph{$M$-alternating path} if its edges are alternately in $M$ and not in $M$.
    We can construct a directed graph by directing all non-matching edges from $V_1$ to $V_0$ and contracting the matching edges to single vertices. The resulting graph is called the \textit{$M$-direction} of $G$ and is denoted $D(G, M)$.
    
	\begin{definition}[Matching Covered Graph]
		A graph $G$ is \emph{matching covered} if it is connected, has at least one edge, and for every edge $e\in E(G)$, there exists a perfect matching of $G$ containing $e$.
	\end{definition}

    
    Given a graph $G$ and a set $X\subseteq V (G)$, we call the set $\partial(X)\coloneqq \{e\in E(G)\colon |e\cap X|=1 \}$ the \emph{cut induced by $X$}.
    Let $G$ be a matching covered graph. A subset $X\subseteq V (G)$ is a \emph{tight set} in $G$ if every perfect matching of $G$ has precisely one edge in $\partial(X)$ and we refer to $\partial(X)$ as a \emph{tight cut}. Two tight cuts $C_1$ and $C_2$ are \emph{nested} if there exist disjoint sets $X_1, X_2 \subseteq V(D)$ such that $\boundary{X_1}=C_1$ and $\boundary{X_2}=C_2$.
    

  	\begin{proposition}[\cite{bowler2023decomposition}]\label{tightsettoseparation}
		The tight cuts in a matching covered bipartite graph $G$ with respect to a perfect matching $M$ correspond one-to-one to the $1$-separations in $D(G, M)$.
	\end{proposition}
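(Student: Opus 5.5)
The plan is to build an explicit map from tight sets of $G$ to directed $1$-separations of $D \coloneqq D(G,M)$ and check that it is a bijection. Throughout, each vertex of $D$ is a matching edge $e \in M$. As usual for such cuts, ``tight cut'' is read up to the trivial ones, so we consider tight sets $X$ with $|X|,|V(G)\setminus X|\ge 2$. Let $G$ have bipartition $\{V_1,V_0\}$, and fix such a tight set $X$. Since $M$ is perfect and $X$ is tight, exactly one edge $e_X\in M$ lies in $\partial(X)$.

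For the map itself, let $A$ consist of the matching edges with an endpoint in $X$, and let $B$ consist of those with an endpoint outside $X$. Then $A\cup B=V(D)$ and $A\cap B=\{e_X\}$, so the order is $1$.

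\textbf{Key step 1 (parity).} Since $|X|$ is odd, every perfect matching has an odd number of edges in $\partial(X)$. With $X$ tight, exactly one of $|X\cap V_1|-|X\cap V_0|=\pm1$ must hold. By passing to the complement if needed, we may assume $X$ has one more vertex in $V_1$. Hence the endpoint of $e_X$ in $X$ lies in $V_1$.

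\textbf{Key step 2 (no wrong-way edge).} Suppose an edge of $D$ has tail in $A\setminus B$ and head in $B\setminus A$. This arc comes from a non-matching edge $uv$ with $u\in V_1\cap X$ and $v\in V_0\setminus X$. Since $G$ is matching covered, some perfect matching $M'$ contains $uv$. Then $X\setminus\{u\}$ has equally many vertices in each colour class and is disjoint from $u$'s partner under $M'$. Counting colour classes, $M'$ must use an even number $\ge 2$ of edges of $\partial(X)$ once $uv$ is counted. This contradicts tightness, so $(A,B)$, or $(B,A)$ in the complementary orientation, is a directed $1$-separation.

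\textbf{Key step 3 (converse).} Given a directed $1$-separation $(A,B)$ with separator $\{e\}$, let $e=ab$ with $a\in V_1$. Set $X$ to be the union of the matching edges in $A\setminus B$, together with $a$. For an arbitrary perfect matching $M'$, consider $M'\triangle M$, which is a union of $M$-alternating cycles. Each such cycle corresponds to a directed closed walk in $D$. A directed closed walk can leave $A\setminus B$ only by passing through the separator $e$, because there are no arcs from $A\setminus B$ to $B\setminus A$. Tracking where the cycles cross $\partial(X)$ shows that $|M'\cap\partial(X)|=1$, so $X$ is tight.

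Finally, the two constructions are mutually inverse. I expect the main obstacle to be the converse: choosing $X$ correctly (deciding which endpoint of the separator edge goes in) and carrying out the alternating-cycle counting cleanly.
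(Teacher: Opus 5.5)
There is a genuine gap. With the paper's convention for $D(G,M)$ (non-matching edges directed from $V_1$ to $V_0$), you have the orientation backwards, and both key steps fail as written.

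In Key step 2 the parity count gives no contradiction. $X\setminus\{u\}$ is balanced and $u$ is matched out of $X$ by $uv$, so $|M'\cap\partial(X)|$ is $1$ plus an even number, and that even number can be $0$. In fact the conclusion is false. Take $G=C_6$ with vertices $1,\dots,6$ in cyclic order, $V_1=\{1,3,5\}$ and $M=\{12,34,56\}$. Then $D(G,M)$ is the directed triangle $12\to 56\to 34\to 12$. The set $X=\{1,2,3\}$ is tight, with $\partial(X)=\{34,61\}$, and has a $V_1$-surplus, so your map gives $A=\{12,34\}$ and $B=\{34,56\}$. But the arc $12\to 56$ goes from $A\setminus B$ to $B\setminus A$. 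It comes from the edge $16$, which lies in the perfect matching $\{23,45,61\}$, and that matching meets $\partial(X)$ only once.

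Dually, in Key step 3 the directed $1$-separation $(\{34,56\},\{12,34\})$ gives your $X=\{3,5,6\}$. The perfect matching $\{23,45,61\}$ meets $\partial(\{3,5,6\})=\{23,34,45,61\}$ three times, so this $X$ is not tight; the correct set is $\{4,5,6\}$.

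The missing idea is identifying which edges tightness actually forbids. Let $X$ be tight and let $V_\epsilon$ be the class containing its surplus vertex. Then there is no edge between $X\cap V_{1-\epsilon}$ and $V_\epsilon\setminus X$. Indeed, a perfect matching through such an edge exists since $G$ is matching covered; it leaves a surplus of two $V_\epsilon$-vertices in the rest of $X$, so it crosses $\partial(X)$ at least three times. Equivalently, every edge of $\partial(X)$ has its endpoint in $X$ in $V_\epsilon$.

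With your $V_1$-surplus normalisation, this forbids exactly the arcs from $B\setminus A$ to $A\setminus B$, so the separation to output is $(B,A)$. Alternatively, normalise to the side with a $V_0$-surplus and keep $(A,B)$.

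For the converse, take $(A,B)$ with separator $ab$, where $a\in V_1$ and $b\in V_0$, and put $X=\bigcup(A\setminus B)\cup\{b\}$. Every edge of $\partial(X)$ then has its endpoint in $X$ in $V_0$, and $X$ has exactly one more vertex in $V_0$ than in $V_1$. Hence every perfect matching crosses $\partial(X)$ exactly once. No alternating-cycle tracking is needed, and the two maps are visibly mutually inverse.

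Finally, you should not restrict to nontrivial cuts, since you do not exclude trivial $1$-separations on the other side. The trivial cuts $\partial(\{b\})$ and $\partial(\{a\})$ correspond to $(\{ab\},V(D))$ and $(V(D),\{ab\})$.
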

    
    \begin{lemma}[\cite{bowler2023decomposition}]\label{crossing_tight_sets}
		Let \(X\) and \(X'\) be tight sets such that \(X \cap X'\) is odd. Then both \(X \cap X'\) and \(X \cup X'\) are tight sets and there is no edge with endpoints in both \(X \setminus X'\) and \(X' \setminus X\). If \(X \setminus X' \neq \emptyset\), then there is an edge with endpoints in \(X \cap X'\) and \(X \setminus X'\).
	\end{lemma}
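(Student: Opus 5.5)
Let $G$ be matching covered with tight sets $X,X'$ and $|X\cap X'|$ odd. My plan is to count edges of an arbitrary perfect matching $M$ across the four ``corner'' cuts $\partial(X\cap X')$, $\partial(X\cup X')$, $\partial(X\setminus X')$ and $\partial(X'\setminus X)$. The key tool is the submodularity-type identity
\[
|M\cap\partial(X\cap X')|+|M\cap\partial(X\cup X')| + 2\,|M\cap E(X\setminus X',X'\setminus X)| = |M\cap \partial(X)|+|M\cap\partial(X')| = 2 .
\]
It is checked by classifying each edge of $M$ according to which of the four regions $X\cap X'$, $X\setminus X'$, $X'\setminus X$, $V\setminus(X\cup X')$ contain its endpoints.

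Next I use parity. Because $|X\cap X'|$ is odd, every perfect matching has an odd number of edges in $\partial(X\cap X')$, so this number is at least $1$. Since $|X|$ is odd (being tight) and $|V(G)|$ is even, $|X\cup X'| = |X|+|X'|-|X\cap X'|$ is odd. Hence $|M\cap\partial(X\cup X')|\ge 1$ as well. Plugging these into the identity gives $|M\cap\partial(X\cap X')| = |M\cap\partial(X\cup X')|=1$ and $M\cap E(X\setminus X',X'\setminus X)=\emptyset$. As $M$ was arbitrary, $X\cap X'$ and $X\cup X'$ are tight. Also, no edge between $X\setminus X'$ and $X'\setminus X$ lies in any perfect matching. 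Since $G$ is matching covered, every edge lies in some perfect matching, so no such edge exists at all.

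For the last claim, suppose $X\setminus X'\neq\emptyset$ and there is no edge between $X\cap X'$ and $X\setminus X'$. By the previous paragraph, $X\setminus X'$ has no edges to $X'\setminus X$ either, so $\partial(X\setminus X')\subseteq \partial(X)$. The set $X\setminus X'$ has even size (odd minus odd). So for any perfect matching $M$, $|M\cap \partial(X\setminus X')|$ is even, and it is at most $|M\cap\partial(X)|=1$; hence it is $0$. The same parity argument shows that $X\cap X'$, whose cut also lies inside $\partial(X)\cup\partial(X')$ here, behaves consistently, but the real point is the following. Every edge of $\partial(X\setminus X')$ lies in $\partial(X)$, and by matching coveredness each edge lies in some perfect matching. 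Therefore $\partial(X\setminus X')$ must be empty. But $G$ is connected and $X\setminus X'$ is a nonempty proper subset of $V(G)$ (it misses $X\cap X'\neq\emptyset$), which gives a contradiction.

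The main obstacle is only the bookkeeping in the edge-counting identity. It must correctly weight the diagonal edges between $X\setminus X'$ and $X'\setminus X$ by $2$, while edges between $X\cap X'$ and the complement of $X\cup X'$ are counted once on each side. I would verify this by a short case table over the six possible region pairs.
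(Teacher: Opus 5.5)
Your proof is correct and complete. The paper does not prove this lemma itself; it is quoted from Bowler et al.\ without proof. Your argument is the standard uncrossing proof of it:
\begin{itemize}
\item The identity $|M\cap\partial(X)|+|M\cap\partial(X')| = |M\cap\partial(X\cap X')|+|M\cap\partial(X\cup X')|+2|M\cap E(X\setminus X',X'\setminus X)|$, combined with the two parity lower bounds, forces every term.
\item Matching-coveredness then rules out edges between $X\setminus X'$ and $X'\setminus X$.
\item For the last claim, the even set $X\setminus X'$ would satisfy $\partial(X\setminus X')\subseteq\partial(X)$, so no edge of $\partial(X\setminus X')$ lies in a perfect matching. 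Hence $\partial(X\setminus X')=\emptyset$, which contradicts connectivity because $X\cap X'\neq\emptyset$.
\end{itemize}

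Three cosmetic points:
\begin{itemize}
\item The sentence about $X\cap X'$ ``behaving consistently'' does no work and can be deleted.
\item The evenness of $|V(G)|$ is not needed to get $|X\cup X'|$ odd.
\item In your closing remark, the edges between $X\cap X'$ and $V(G)\setminus(X\cup X')$ lie in both cuts on each side of the identity, so they have coefficient $2$ on both sides and cancel. The displayed identity itself is right.
\end{itemize}
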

    
    A graph without nontrivial tight cuts is called a \textit{brace} if it is bipartite, or a \textit{brick} otherwise. Such a graph is referred to as a \textit{BoB} (short for Brick or Brace).
    A set $S$ is \emph{passable} for $P$ if $P\cup S$ and $P\setminus S$ are tight.
	A partition $\mathcal{P}$ of the vertex set of a matching covered graph $G$ is a \emph{tight set partition} of $G$ if every $P\in \mathcal{P}$ is a tight set. For every tight set partition $\mathcal{P}$ of a matching covered graph $G$, a $\operatorname{coll}(\mathcal{P})$ is the graph with vertex set $\mathcal{P}$ and an edge between $P$ and $Q$ if and only if there are $p\in P$ and $q\in Q$ such that $pq\in E(G)$.
	
	
	\begin{definition}[Torso]\label{def_torso}
		Let $\mathcal{C}$ be a maximal family of nested tight cuts in $G$.
		A \emph{maximal star} of $\mathcal{C}$ is a tight set partition $\Partition$ of size at least $4$ such that $\collapse{\Partition}$ is a \BoB and $\boundary{P} \in \mathcal{C}$ for every $P \in \Partition.$
		Then we call $\collapse{\Partition}$ a \emph{torso of $\mathcal{C}$ at the maximal star $\mathcal{\Partition}$}, or a \emph{torso} for short.
		
		Furthermore, we simply call a torso at some maximal star of some maximal family of nested tight cuts in $G$ a \emph{torso in $G$}.
	\end{definition}
	If a torso is a $C_4$, we call it a \emph{$C_4$-torso}. Otherwise, the torso is a \BoB other than $C_4$, and we call it a \emph{non-$C_4$-torso}.
	Given a torso $\mathcal{S}$, we call a vertex $P$ of $\mathcal{S}$ \emph{single} if the tight set $P$ is a singleton. Further, let $i$ be the maximum number of adjacent single vertices in a $C_4$-torso $\mathcal{S}$, then we call $\cS$ a \emph{$C_4^i$-torso}.
	For any torso $\cS$, we call the union of single vertices of $\cS$ the \emph{flag} of $\cS$, denoted by $\mathcal{F}$.
    We remark that the flag of $\cS$ is a set of vertices of $G$.
	Note that the flags of different torsos of the same maximal family of nested tight cuts are disjoint.
	
	\begin{definition}[Torsoid]\label{def-torsoid}
		A \emph{torsoid} $\mathcal{T}$ in a matching covered graph $G$ is a pair $(H, \varepsilon)$ with:
		\begin{enumerate}
			\item[(T1)] $H$ is a matching covered graph on at least 4 vertices that is a BoB or a cycle
			\item[(T2)] the elements of $V(H)$ are tight sets of $G$
			\item[(T3)] $\varepsilon : E(H) \to 2^{V(G)}$
			\item[(T4)] $V(H) \cup \text{Im}(\varepsilon)$ is a near partition of $V(G)$, where $\text{Im}(\varepsilon)$ is the image of $\varepsilon$
			\item[(T5)] for $vw \in E(H)$, there is an edge from $v \cup \varepsilon(vw)$ to $w$ and $\varepsilon(vw)$ is largest among all subsets of $v \cup \varepsilon(vw) \cup w$ that are passable for both $v$ and $w$
			\item[(T6)] for $vw \notin E(H)$, there is no edge from $v$ to $w$ in $G$
			\item[(T7)] if $H$ is a cycle and $v$ is a vertex of $H$ with neighbours $u$ and $w$ then there is no partition of $\varepsilon(uv) \cup v \cup \varepsilon(vw)$ into tight sets $P_1, P_2, P_3$ such that both $u \cup P_1 \cup P_2$ and $P_2 \cup P_3 \cup w$ are tight.
		\end{enumerate}
		We call $\mathcal{T}$ \emph{cyclic} if $H$ is a cycle and \emph{noncyclic} otherwise.
	\end{definition}

    We say a torso $\mathcal{S}$ \emph{cleaves} a torsoid $\mathcal{T}$ in $G$ if every vertex of $\mathcal{S}$ contains a vertex of $\mathcal{T}$.
	Then by~\cite[Section 7]{bowler2023decomposition} we have following relations between torsos and torsoids that we will need:
	\begin{theorem} \label{rem:cyclic_torsoids}
		Let $G$ be a matching covered graph and let $\mathcal{C}$ be a maximal family of nested tight cuts in $G$. Then:
        \begin{enumerate}[label=(\arabic*)]
            \item\label{cleave-one} Each torso of $\mathcal{C}$ cleaves precisely one torsoid of $G$.
            \item\label{cleave-number} A cyclic torsoid with $|H|=n$ is cleaved by $\frac{n}{2}-1$ $C_4$-torsos of $\cC$.
            \item $G$ contains only cyclic torsoids if and only if all torsos of $\mathcal{C}$ are $C_4$-torsos.
        \end{enumerate}
	\end{theorem}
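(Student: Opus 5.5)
This theorem is quoted from \cite[Section 7]{bowler2023decomposition}, so within this paper it is simply cited. If I had to reprove it, I would go through the correspondence between tight set partitions and torsoids developed there. The key input is that a torsoid is an equivalence class of tight set partitions $\cP$ with a fixed collapse $\collapse{\cP}=H$. Here $H$ is either a BoB or an even cycle of length at least $4$. Two partitions are equivalent when one arises from the other by moving passable sets between adjacent classes, and the sets $\varepsilon(vw)$ record the maximal such passable material.

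For \ref{cleave-one}, I would take a maximal star $\cP$ of $\cC$ with torso $\collapse{\cP}$. The plan is to show that $\cP$ can be refined by passable moves to a partition representing some torsoid $\cT$. Concretely, shrink each $P\in\cP$ to a vertex $v_P$ of $H$ by stripping off the maximal passable sets, using \cref{crossing_tight_sets} to see that intersections and unions of the relevant odd tight sets remain tight. This gives existence, and every vertex of the torso then contains a vertex of $\cT$.

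For uniqueness, I would argue as follows. Suppose two torsoids $\cT,\cT'$ are both cleaved by the same torso. Each class of $\cP$ contains vertices of both. Then uncrossing with \cref{crossing_tight_sets} and the maximality in (T5) forces the vertex sets to agree up to the $\varepsilon$-parts, so $\cT=\cT'$.

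For \ref{cleave-number}, I would use the nestedness of $\cC$. Let $\cT$ be cyclic with $H=C_n$. The cuts of $\cC$ that separate vertices of $H$ must be nested, and each cuts the cycle into two arcs of odd total size, i.e.\ they are chords of an $n$-gon. A maximal family of pairwise non-crossing such chords, together with the vertex-cuts, is a triangulation-like decomposition of the $n$-gon into quadrilaterals, since the torsos have size $4$ by (T7) and the BoB condition. A $C_4$-dissection of an $n$-gon has exactly $\frac{n-2}{2}=\frac n2-1$ faces, by an Euler count: $k$ quadrilaterals use $\frac{n}{2}-2$ diagonals, and $4k = n + 2(\frac n2 -2)$. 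It then remains to show that each such face is exactly a $C_4$-torso cleaving $\cT$, and that no other torso cleaves $\cT$. The second part follows from \ref{cleave-one}.

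The third item follows from the first two. If all torsos are $C_4$, every torsoid cleaved by one is cyclic, and every torsoid is cleaved by some torso by a surjectivity part of the correspondence. Conversely, a non-$C_4$-torso cleaves a torsoid whose $H$ equals that BoB, which is noncyclic.

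The main obstacle I expect is the surjectivity and the precise face-to-torso identification in \ref{cleave-number}. Specifically, one must show that maximality of $\cC$ forces the chord family to be a full quadrangulation and cannot leave a larger cyclic face.
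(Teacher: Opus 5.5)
You are right that the paper gives no proof here and simply imports the result from \cite[Section 7]{bowler2023decomposition}. The problem is your reproof sketch: its existence step for (1) is wrong as stated. You treat the maximal star $\cP$ as a representative of the torsoid it cleaves, shrinking each class $P$ to one vertex $v_P$ of $H$ by passable moves. But passable moves never change the number of classes, while every representative of a torsoid $\cT=(H,\varepsilon)$ has exactly $|H|$ classes.

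By (2) itself, a $C_n$-torsoid with $n\ge 6$ is cleaved by $C_4$-torsos, so some class of such a torso contains several vertices of $H$. For $H=h_1h_2\dots h_6$, one cleaving torso has classes containing $h_1$, $h_2$, $h_3$ and $h_4\cup h_5\cup h_6$. No stripping of passable material turns this $4$-class partition into a representative of a $C_6$-torsoid. The $4$-cycle you obtain also violates (T7) at the class containing $h_4,h_5,h_6$: split it into three tight sets around $h_4$, $h_5$, $h_6$, and the arcs $h_3h_4h_5$ and $h_5h_6h_1$ give the forbidden tight sets.

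So the missing idea is the (T7)-driven step. You must split such a class into three tight sets and lengthen the cycle, and show that this terminates and is independent of choices. This is essentially the content of (2), so (1) cannot be proved first in the way you propose. Your uniqueness argument (``the vertex sets agree up to the $\varepsilon$-parts'') fails for the same reason: the vertices of $\cT$ do not correspond to the classes of $\cP$.

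The later steps have further gaps.
\begin{itemize}
\item In (2), ``no other torso cleaves $\cT$'' does not follow from (1). Item (1) bounds the number of torsoids per torso, not the number of torsos per torsoid. You must show directly that the cuts of any torso cleaving $\cT$ are among your chords and bound a face.
\item Also in (2), (T7) is a condition on torsoids and plays no role in the faces having four sides. What you need is that a torso cleaving a cyclic torsoid has cyclic collapse, together with the fact that $C_4$ is the only BoB that is a cycle.
\item In (3), you use two statements that are not contained in (1) and (2): that every torsoid is cleaved by some torso of $\cC$, and that a $C_4$-torso cannot cleave a noncyclic torsoid. Each needs its own argument.
\end{itemize}
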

	
    
	
	
	
	\section{Path-like graphs}\label{sec:path-like}
	Let $G$ be a matching covered graph and $\mathcal{C}$ be a maximal family of nested tight cuts in $G$.
    Given a tree $T$, we let $\mathcal{L}(T)$ be the set of leaves of $T$ and set $\mathcal{I}(T):= V(T) \setminus \mathcal{L}(T)$.
	Further, let $\mathcal{P}(v)$ be the partition of $\mathcal{L}(T)$ into the vertices that are contained in a common component of $T-v$.
	
	\begin{definition}[Representation tree]\label{representation_tree}
		A \emph{representation tree} of $\mathcal{C}$ is a triple $(T, \alpha, \beta)_{\mathcal{C}}$, where $T$ is a tree, $\alpha$ is a bijection between $\mathcal{L}(T)$ and $V(G)$ and $\beta$ is a bijection between $\mathcal{I}(T)$ and the maximal stars of $\mathcal{C}$ such that $\alpha(\mathcal{P}(v)) = \beta(v)$ for any $v \in \mathcal{I}(T)$.
	\end{definition}
	
	\begin{proposition}
		There exists a unique representation tree of $\mathcal{C}$.
	\end{proposition}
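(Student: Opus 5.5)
Existence and uniqueness both come from the standard correspondence between a nested family of cuts and a tree.

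\textbf{Existence.} I would build $T$ from the maximal stars of $\mathcal{C}$ together with the vertices of $G$.
\begin{enumerate}[label=(\roman*)]
\item First orient the cuts. Each cut $C \in \mathcal{C}$ has two sides $X$ and $V(G)\setminus X$, and I consider both oriented cuts $(X, V(G)\setminus X)$. Since the cuts are pairwise nested, for any two such sides $X$ and $Y$ (belonging to different cuts) we have $X\subseteq Y$, $Y\subseteq X$, $X\cap Y=\emptyset$, or $X\cup Y=V(G)$. So these sides form a laminar-type system, a cross-free family in the sense of Edmonds--Giles.
\item Apply the standard theorem that a cross-free family of subsets of a finite ground set $V(G)$ is represented by a tree. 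Concretely, there is a tree $T'$ together with a map $\varphi\colon V(G)\to V(T')$ such that each cut of $\mathcal{C}$ corresponds to exactly one edge $e$ of $T'$, and its two sides are the $\varphi$-preimages of the two components of $T'-e$.
\item Add the trivial cuts $\partial(\{v\})$, which are tight and nested with everything, so maximality puts them in $\mathcal{C}$. Their edges become pendant edges, so every vertex of $G$ is mapped to a distinct leaf. This gives the bijection $\alpha$ between $\mathcal{L}(T)$ and $V(G)$.
\item Identify the internal vertices with the maximal stars. For an internal vertex $t$, the partition $\alpha(\mathcal{P}(t))$ consists of sides of cuts in $\mathcal{C}$, so it is a tight set partition whose cuts all lie in $\mathcal{C}$.
\end{enumerate}

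\textbf{The main obstacle} is showing that $\alpha(\mathcal{P}(t))$ has size at least $4$ and that its collapse is a \BoB, and conversely that every maximal star arises from some internal vertex. This is where maximality of $\mathcal{C}$ is used.
\begin{itemize}
\item \emph{The collapse is a \BoB.} Suppose $\collapse{\mathcal{P}}$ had a nontrivial tight cut. Its preimage in $G$ would be a tight cut of $G$, since contracting tight sets preserves tightness of cuts. This cut is nested with all of $\mathcal{C}$ but does not belong to it, contradicting maximality.
\item \emph{Degree at least $4$.} A degree-$2$ vertex does not occur, because the tree representation has no redundant vertices. A degree-$3$ vertex does not occur, because a tight set partition into three parts would make one part's complement equal to the union of the other two. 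Parity then forces a contradiction: an odd tight set partition's collapse must be matching covered with an even number of vertices, and a \BoB has at least $4$ vertices.
\item \emph{Surjectivity onto maximal stars.} Given a maximal star $\mathcal{P}$, its sides $P\in\mathcal{P}$ each correspond to a tree edge. These edges must all be incident with one common vertex $t$; otherwise some cut of $\mathcal{C}$ would separate the parts and yield a nontrivial tight cut in $\collapse{\mathcal{P}}$. Hence $\mathcal{P}=\alpha(\mathcal{P}(t))$.
\end{itemize}
This yields $\beta$.

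\textbf{Uniqueness.} Given two representation trees, the condition $\alpha(\mathcal{P}(v))=\beta(v)$ determines adjacency: internal vertices $u$ and $v$ are adjacent exactly when some part of $\beta(u)$ is the complement of some part of $\beta(v)$. A leaf is attached to the internal vertex whose star contains that vertex as a singleton part. Therefore the map that is the identity on leaves (via $\alpha$) and matches internal vertices via $\beta$ is a tree isomorphism compatible with both $\alpha$ and $\beta$. Since a tree with a given leaf set is determined by its leaf bipartitions, this is essentially Buneman's splits-equivalence theorem.
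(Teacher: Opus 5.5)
Your proposal is correct, but it takes a different route from the paper.

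\textbf{The paper's route.} The paper argues by recursion on the number of torsos. It picks a cut $\partial(Y)\in\mathcal{C}$ with both sides non-singleton and contracts each side to obtain $G_1$ and $G_2$. It then takes their representation trees for the induced subfamilies and glues the two trees by an edge joining the stars that contained the contracted vertices. Uniqueness is justified only by the remark that every representation tree must obey the same recursion.

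\textbf{Your route.} You start from the global tree representation of the cross-free family of shores (Edmonds--Giles). You then check directly that the node partitions are exactly the maximal stars:
\begin{itemize}
\item Maximality of $\mathcal{C}$ gives the BoB property: a nontrivial tight cut of $\operatorname{coll}(\mathcal{P})$ lifts to a tight cut of $G$ that is nested with every member of $\mathcal{C}$ but is not in $\mathcal{C}$.
\item Non-redundancy together with parity gives degree at least $4$. The real reason for excluding degree $3$ is that an even vertex set cannot be partitioned into three odd tight sets; the complement remark in your write-up is not what does the work.
\item The BoB property forces each maximal star to sit at a single node.
\end{itemize}
Uniqueness then follows from splits-equivalence.

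\textbf{Comparison.} Your route makes explicit exactly where maximality and matching-coveredness enter, and it gives an actual uniqueness argument. The paper's recursion mirrors the tight cut decomposition procedure. However, it tacitly uses that the induced subfamilies are maximal nested families in $G_1$ and $G_2$, and that the maximal stars of $\mathcal{C}$ are precisely those of the two pieces.

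\textbf{Points to spell out when writing yours up.}
\begin{itemize}
\item In the surjectivity step, each side of an edge inside the subtree spanned by the star receives at least two parts, because internal nodes have degree at least $3$.
\item A tight cut of $G$ that is a union of parts is tight in $\operatorname{coll}(\mathcal{P})$, because perfect matchings of the collapse lift to $G$. This uses matching-coveredness: for each part $P$ and each $u\in P$ incident with $\partial(P)$, the graph $G[P]-u$ has a perfect matching.
\item $\beta$ is injective, because distinct internal nodes induce distinct leaf partitions.
\end{itemize}
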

	\begin{proof}
        Let $\mathbb{S}$ be the family of all torsos based on $\cC$.
		We construct the desired representation tree by recursion on $|\mathbb{S}|$.
		For $|\mathbb{S}|=1$, let $T$ be a star with $|V(G)|$ with $4$ leaves.
		Further, let $\alpha$ map the leaves to distinct vertices of $G$ and let $\beta$ map the non-leave vertex of $T$ to the unique maximal star of $G$.
		Note that $(T, \alpha, \beta)$ is the desired represenation tree of $G$.
		For $|\mathbb{S}| > 1$, pick some non-singleton set $Y \subseteq V(G)$ with $\boundary{Y} \in \mathcal{C}$. Let $G_1$ be the graph obtained by contracting $Y$ to a single vertex $y_1$ and let $G_2$ be the graph obtained by contracting $V(G) \setminus Y$ to a single vertex $y_2$.
		Further, let $(T_1, \alpha_1, \beta_1)$ and $(T_2, \alpha_2, \beta_2)$ be the representation trees of $G_1$ and $G_2$ with respect to the corresponding subfamilies of $\mathcal{C}$.
		Finally, let $T$ be the tree obtained from $T_1-y_2$ and $T_2-y_1$ by adding an edge incident with the maximal star of $G_1$ that is incident with $y_2$ in $T_1$ and the maximal star of $G_2$ that is incident with $y_1$ in $T_2$.
		Then $(T, \alpha, \beta)$ is the desired representation tree, where $\alpha$ and $\beta$ extend the maps $\alpha_1, \alpha_2, \beta_1, \beta_2$, respectively.
		Every representation tree of $\mathcal{C}$ has to obey the conditions used for this recursion.
		Thus the representation tree is indeed unique.
	\end{proof}

	\begin{definition}\label{path-like}
		Given a representation tree $(T, \alpha, \beta)_\mathcal{C}$ of $\mathcal{C}$, we say $\mathcal{C}$ is \emph{path-like} if the subgraph $T[\mathcal{I}(T)]$ is a path with only $C_4^3$- and $C_4^2$-torsos.
        If such a path-like $\mathcal{C}$ exists in $G$, then we say $G$ is \emph{path-like}.
	\end{definition}
	
	From \Cref{def_torso,representation_tree,path-like,rem:cyclic_torsoids}, we can get some properties of path-like graphs as following.
	\begin{observation}\label{ob_path-like}
		Let $G$ be a path-like graph, $\mathcal{C}$ be a path-like maximal family of nested tight cuts in $G$ and $(T, \alpha, \beta)_{\mathcal{C}}$ be the representation tree of $\mathcal{C}$. Then the following properties hold:
		\begin{enumerate}[label=(\arabic*)]
			\item $G$ is a matching covered bipartite graph with only cyclic torsoids.
			\item\label{obs:order_of_path} The maximal stars in $\mathcal{C}$ admits a canonical linear ordering induced by the path $T[\mathcal{I}(T)]$, denoted $\cS_1,\cdots,\cS_n$.
			\item \label{obs:adjacent} Each maximal star $\cS_i$ contains at least two adjacent single vertices.
			\item \label{ob_path1} $\beta$ maps the endpoints of the path $T[\mathcal{I}]$ to $C_{4}^3$-torsos and all other vertices of $T[\mathcal{I}]$ to $C_4^2$-torsos.
			\item \label{obs:segment_of_path} For every initial or terminal segment $J$ of the path $T[\mathcal{I}]$, the subgraph of $G$ induced by all flags of $J$ is connected.
		\end{enumerate}
	\end{observation}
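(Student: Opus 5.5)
We go through the five items in turn, each time reading off the structure from the definitions. The only tools needed are \Cref{def_torso}, \Cref{representation_tree}, \Cref{path-like} and \Cref{rem:cyclic_torsoids}, plus two standard facts: tight sets are odd, and contracting a tight set preserves being matching covered.

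For (1), every torso of the path-like $\mathcal{C}$ is a $C_4$-torso. So the third part of \Cref{rem:cyclic_torsoids} gives that all torsoids of $G$ are cyclic. For bipartiteness we use that $G$ is built from its torsos by repeatedly uncontracting tight cuts, exactly as in the recursive construction of the representation tree. Each torso $C_4$ is bipartite. Gluing two bipartite matching covered graphs along a tight cut (splicing the contracted vertices $y_1,y_2$) gives a bipartite graph, because the colour classes can be matched at the spliced vertex. An induction on the number of torsos along this recursion therefore shows that $G$ is bipartite.

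For (2), $T[\mathcal{I}(T)]$ is a path, so its vertices come in a linear order that is unique up to reversal. Transporting this order through the bijection $\beta$ gives $\cS_1,\dots,\cS_n$. Item (4) is just a restatement of the definition of path-like, once we know which torsos are $C_4^3$ and which are $C_4^2$. This is fixed by the intended meaning of the definition: exactly two torsos are $C_4^3$, and these must be the ends of the path.

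To justify this, count leaves of $T$ at each internal vertex $v$. Its degree in $T$ is $4$, since every torso is a $C_4$. An endpoint of the path has exactly one internal neighbour, so three of its neighbours are leaves; through $\alpha$ these correspond to single vertices. An interior vertex of the path has two internal neighbours, so two of its neighbours are leaves. Since single vertices of a torso are exactly the tight sets that are singletons, i.e.\ leaf neighbours of $v$, endpoint torsos have three single vertices and interior ones have two.

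For (3), we must show that the two (or three) single vertices include two that are adjacent in the $C_4$. For an endpoint torso this is automatic, since any three vertices of a $C_4$ contain an adjacent pair. For an interior torso the leaves would be non-adjacent only if the two non-singleton parts, the two sides of the path, were opposite in the $C_4$. This is exactly the case excluded by the hypothesis "$C_4^2$" in the definition of path-like, which requires two adjacent singles. So (3) follows directly from that definition. I flag this as the point where the definition must be read carefully.

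For (5), let $J=\cS_1,\dots,\cS_k$ be an initial segment; the terminal case is symmetric. Let $X$ be the union of the flags of $\cS_1,\dots,\cS_k$. Then $X$ is exactly the set of leaves on one side of the edge $\cS_k\cS_{k+1}$ of $T$, so $\partial(X)\in\mathcal{C}$ and $X$ is tight. We prove connectivity of $G[X]$ by induction on $k$. When $k=1$, the flag consists of three vertices of a $C_4$-torso, which induce a path in $\collapse{\cS_1}$. The corresponding edges exist in $G$ because singleton parts are adjacent in the torso only when an actual edge joins them. For the step, write $X=X'\cup F_k$. Here $G[X']$ is connected by induction, and the two single vertices in $F_k$ are adjacent by (3). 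In $\cS_k$ the part $X'$ is adjacent to at least one single vertex, because in a $C_4$ each vertex has two neighbours and at most one of them is the other non-singleton part. That adjacency is realised by an edge of $G$ from $X'$ to that single vertex, so $G[X]$ is connected.

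The main obstacle is this step of (5). We need that the non-singleton part $X'$ of the $C_4$ is not adjacent only to the other non-singleton part. Here we use that in a $C_4$ every vertex has degree $2$ and there is just one other non-singleton vertex.
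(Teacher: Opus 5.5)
Your proposal is correct and follows the paper's approach. The paper gives no separate argument and simply reads all five items off \Cref{def_torso}, \Cref{representation_tree}, \Cref{path-like} and \Cref{rem:cyclic_torsoids}; you do the same, with leaf-counting at internal vertices of $T$ for (3)--(4), the standard gluing argument along tight cuts for bipartiteness in (1), and induction along the path for (5). The only slip is that in the step $k=n$ of (5) the flag has three single vertices rather than two, which is harmless because then $X=V(G)$, and $G$ is connected since it is matching covered.
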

	In particular, there are precisely two $C_4^3$-torsos of $\mathcal{C}$, and all the other torsos of $\mathcal{C}$ are $C_4^2$-torsos.
	We denote the flag of $\mathcal{S}_i$ as $\mathcal{F}_i$. Note that $\bigcup_{i \in [n]} \mathcal{F}_i$ is a partition of $V(G)$, and the flag $\mathcal{F}_i$ has size 3 if $i\in\{1,n\}$ and size 2 otherwise.

    \begin{observation} \label{obs:deletion_of_flag_edge}
        For every edge $xy \in E(G)$ with $x,y \in \cF_i$ for some $i \in [n]$, the graph $G - x - y$ is connected.
    \end{observation}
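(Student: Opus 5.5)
The plan is to combine the linear structure from \Cref{ob_path-like} with the fact that matching covered graphs on at least four vertices are $2$-connected. The key idea is that removing an edge inside a flag is the same as removing one perfect-matching-compatible pair, and the graph splits along the path of stars into an initial and a terminal part, each connected by \Cref{ob_path-like}\ref{obs:segment_of_path}.

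Let $xy\in E(G)$ with $x,y\in\cF_i$. The vertex sets $\cF_1\cup\dots\cup\cF_{i-1}$ and $\cF_{i+1}\cup\dots\cup\cF_n$ are the flags of an initial and a terminal segment of $T[\mathcal{I}(T)]$. By \Cref{ob_path-like}\ref{obs:segment_of_path}, each induces a connected subgraph of $G$ (when nonempty), and neither contains $x$ or $y$.

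What remains of $\cF_i$ is $\cF_i\setminus\{x,y\}$. If $i\notin\{1,n\}$, then $|\cF_i|=2$, so $\cF_i=\{x,y\}$ and nothing remains. If $i\in\{1,n\}$, exactly one vertex $z$ remains. I will treat these two cases separately.

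\textbf{Case $1<i<n$.} It suffices to find an edge of $G-x-y$ between the two segments. In the maximal star $\cS_i$, the torso is a $C_4$ whose two non-single vertices are the tight sets
\[
A=\textstyle\bigcup_{j<i}\cF_j \quad\text{and}\quad B=\textstyle\bigcup_{j>i}\cF_j
\]
(via the path order in \Cref{ob_path-like}\ref{obs:order_of_path}). The single vertices $x,y$ are adjacent in the $C_4$, so $A$ and $B$ are the other two vertices of the cycle.

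Here lies the main obstacle: $A$ and $B$ are \emph{not} adjacent in this $C_4$. So there is no edge between the segments, and $G-x-y$ would be disconnected. I would therefore recheck the cyclic order. In a $C_4^2$-torso the order is $x,y,B,A$ around the cycle, so $A$ is adjacent to $x$ and $B$ to $y$, and $A,B$ are opposite. This suggests the statement holds only if an edge of $G$ crosses between $A$ and $B$ outside $\cS_i$.

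In a bipartite matching covered graph, two parts opposite in a $C_4$ collapse lie in the same colour class pattern. Thus $A$ and $B$ are tight sets with the same majority colour, so no such edge need exist. To resolve this, I would use that $xy$ is a flag edge with $\cS_i$ a $C_4^2$-torso, and that contracting gives $\{A\cup\{x\},\, B\cup\{y\}\}$. By \Cref{crossing_tight_sets} applied to the tight sets $A\cup\{x\}$ and $\{x\}\cup A'$ from neighbouring stars, an edge must leave $A$ into $B$, or else some $C_4$ in the sequence is degenerate. Concretely, I would attempt to show that $A\cup\{x,y\}$ is tight, giving the needed edge from $A$ to $B$. I flag this as the step most likely to need a different argument, possibly by re-deriving connectivity through a $2$-connectivity argument on $G$ instead.

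\textbf{Case $i\in\{1,n\}$.} The torso is a $C_4^3$ with singles $x,y,z$ and one non-single part $B$. The vertex $z$ is adjacent in the $C_4$ to one of $x,y$ and to $B$, so $z$ has a neighbour in $B$. The set $B$ is connected by \Cref{ob_path-like}\ref{obs:segment_of_path}, which yields connectivity of $G-x-y$.
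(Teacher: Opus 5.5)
\textbf{Gap in the case $1<i<n$.} This case rests on a false claim and is never completed. If $x,y$ are adjacent single vertices of the $C_4$-torso $\collapse{\cS_i}$, the other two vertices $A=\bigcup_{j<i}\cF_j$ and $B=\bigcup_{j>i}\cF_j$ are joined by the edge of the $4$-cycle opposite to $xy$. You write the cyclic order $x,y,B,A$ yourself. Its edges are $xy$, $yB$, $BA$, $Ax$, so $A$ and $B$ are \emph{adjacent}; the opposite pairs are $\{x,B\}$ and $\{y,A\}$.

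So the ``main obstacle'' does not exist, and the detour that follows does not lead anywhere. The majority-colour argument actually points the other way, since adjacent parts of the bipartite collapse have different majority colours. The set $A\cup\{x,y\}$ is tight trivially, as the complement of the tight set $B$, but that alone yields no edge between $A$ and $B$. Appealing to $2$-connectivity only gives connectivity of $G-x$, not of $G-x-y$. As written, your proposal proves nothing for any interior flag, which is the bulk of the statement.

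\textbf{The missing step.} Adjacency of $A$ and $B$ in $\collapse{\cS_i}$ means some edge of $G$ joins $A$ and $B$. By \Cref{ob_path-like}-\Cref{obs:segment_of_path}, both $G[A]$ and $G[B]$ are connected. Since $V(G)\setminus\{x,y\}=A\cup B$, it follows that $G-x-y$ is connected.

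This is the same fact you used correctly in the endpoint case: there $z$ is adjacent to $B$ because $zB$ is the edge of the $C_4$ opposite to $xy$. Your endpoint case is therefore fine, and both cases follow from the single observation that the two vertices of a $C_4$ not on a given edge are adjacent.
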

    
    By the property of torsos, we can get the following two observations about flags.

	\begin{observation}\label{ob_flag}
		For any vertex $x$ in $\mathcal{F}_i$, $N(x)\cap (V(G)\setminus \mathcal{F}_i)$ is non-empty, and is contained either entirely in the preceding flags or entirely in the succeeding flags.
	\end{observation}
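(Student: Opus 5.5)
Fix $x \in \mathcal{F}_i$ and let $v_i \in \mathcal{I}(T)$ be the internal vertex with $\beta(v_i) = \mathcal{S}_i$. Since $x$ is single in $\mathcal{S}_i$, the leaf $\alpha^{-1}(x)$ is adjacent to $v_i$ in $T$. The plan is to read off the neighbourhood of $x$ from the $C_4$-structure of $\collapse{\mathcal{S}_i}$. The vertices of $\mathcal{S}_i$ are the parts of $\mathcal{P}(v_i)$, that is, the leaf sets of the components of $T - v_i$. Because $T[\mathcal{I}(T)]$ is a path, these components are the single leaves adjacent to $v_i$ (which give exactly the flag $\mathcal{F}_i$), at most one component containing $v_{i-1}$, and at most one containing $v_{i+1}$. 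The first of these has leaf set $L_i^- := \bigcup_{j<i}\mathcal{F}_j$ and the second $L_i^+ := \bigcup_{j>i}\mathcal{F}_j$.

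Consider first $i \notin \{1,n\}$. Then $\mathcal{S}_i$ consists of two singletons $x,y$ together with $L_i^-$ and $L_i^+$, and $\collapse{\mathcal{S}_i}$ is a $C_4$. Since $\mathcal{S}_i$ is a $C_4^2$-torso, $x$ and $y$ are adjacent in the $C_4$. Hence $L_i^-$ and $L_i^+$ are also adjacent, and each of them is adjacent to exactly one of $x,y$. So in $\collapse{\mathcal{S}_i}$, $x$ has exactly one neighbour other than $y$, which is either $L_i^-$ or $L_i^+$. By the definition of $\collapse{\cdot}$, $x$ has a neighbour in $G$ inside that set and none inside the opposite one. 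Since $V(G)\setminus\mathcal{F}_i = L_i^- \cup L_i^+$, this gives both non-emptiness and one-sidedness.

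For $i=1$, the star $\mathcal{S}_1$ consists of the three singletons of $\mathcal{F}_1$ and $L_1^+$. We have $L_1^- = \emptyset$, and $L_1^+ \neq \emptyset$ because $|\mathcal{S}_1| \ge 4$. Moreover, $V(G)\setminus \mathcal{F}_1 = L_1^+$, so one-sidedness is automatic. It remains to show that every $x\in\mathcal{F}_1$ has a neighbour in $L_1^+$. In the $C_4$ on the four parts, the vertex $L_1^+$ has two neighbours and the three singletons form a path, so the two ends of that path are adjacent to $L_1^+$. The middle singleton, however, is adjacent only to the other two singletons in $\collapse{\mathcal{S}_1}$, so it has no neighbour in $L_1^+$. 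The case $i=n$ is symmetric.

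This last point is the main obstacle: read literally, the statement fails for the middle vertex of the flag of a $C_4^3$-torso, since that vertex's neighbours all lie in $\mathcal{F}_i$. I would therefore restrict the non-emptiness claim to $i\notin\{1,n\}$ and to the two end vertices of $\mathcal{F}_1$ and $\mathcal{F}_n$. For the middle vertex of $\mathcal{F}_1$ or $\mathcal{F}_n$, the one-sidedness claim still holds trivially and its neighbourhood is exactly the rest of the flag. I expect the authors either tacitly exclude this vertex or intend the restricted reading.

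In a write-up I would also check the tree-combinatorics claim that the non-singleton parts of $\mathcal{S}_i$ are exactly $L_i^\pm$. This follows because every leaf of $T$ is adjacent to exactly one internal vertex, and $T[\mathcal{I}(T)]$ is the path $v_1\cdots v_n$ by \cref{ob_path-like}\,\ref{obs:order_of_path}.
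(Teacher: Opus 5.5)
Your argument is correct and is what the paper's one-line justification (``by the property of torsos'') amounts to. You read off each flag vertex's neighbours outside $\mathcal{F}_i$ from the $C_4$ structure of $\operatorname{coll}(\mathcal{S}_i)$, whose non-singleton parts are $\bigcup_{j<i}\mathcal{F}_j$ and $\bigcup_{j>i}\mathcal{F}_j$. One small fix: $L_1^+\neq\emptyset$ holds because $n\ge 2$ (there are two distinct $C_4^3$-torsos), not because $|\mathcal{S}_1|\ge 4$.

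You are also right that non-emptiness fails for the middle vertex of $\mathcal{F}_1$ and of $\mathcal{F}_n$. The paper tacitly concedes this in the very next sentence, where it speaks of ``the unique vertex of $\mathcal{F}_1$ ($\mathcal{F}_n$) that is only adjacent to elements of $\mathcal{F}_1$ ($\mathcal{F}_n$)'', so your restricted reading is the intended one.
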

	
	In the former case, we call $x$ the \textit{upper vertex} of $\mathcal{F}_i$ and in the latter case the \textit{lower vertex} of $\mathcal{F}_i$.
	Furthermore, we call the unique vertex of $\mathcal{F}_1$ ($\mathcal{F}_n$) that is only adjacent to elements of $\mathcal{F}_1$ ($\mathcal{F}_n$) an \emph{upper vertex} (\emph{lower vertex}).

    \begin{observation}\label{ob_flag_2}
        For each $i \in [n-1] \setminus \{1\}$, the two vertices in $\cF_i$ are classified as the upper vertex and the lower vertex, respectively.
    \end{observation}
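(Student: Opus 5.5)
The statement to prove is \Cref{ob_flag_2}: for each $i\in[n-1]\setminus\{1\}$, one of the two vertices of $\cF_i$ is an upper vertex and the other a lower vertex. The plan is to use the structure of the $C_4^2$-torso $\cS_i$ together with \Cref{ob_flag}. First I would fix the local picture. By \Cref{ob_path-like}\ref{ob_path1}, $\collapse{\cS_i}$ is a $C_4$ whose vertices are, in cyclic order, $x, y, A, B$. Here $x,y$ are the two adjacent single vertices forming $\cF_i=\{x,y\}$, and $A,B$ are non-singleton tight sets. Since $T[\mathcal I(T)]$ is a path and $2\le i\le n-1$, the node $\cS_i$ has exactly two non-leaf neighbours in $T$, namely $\cS_{i-1}$ and $\cS_{i+1}$. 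Its four neighbours in $T$ are therefore the leaves $x,y$ and these two stars. Via $\alpha(\mathcal P(v))=\beta(v)$, one of $A,B$ is the union of the flags $\cF_1,\dots,\cF_{i-1}$ and the other is the union of $\cF_{i+1},\dots,\cF_n$. I name them $U$ (preceding flags) and $L$ (succeeding flags).

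Next I use the adjacency in the $C_4$. The vertex $x$ is adjacent in $\collapse{\cS_i}$ to exactly $y$ and one of $U,L$, and not to the opposite vertex. Likewise $y$ is adjacent to $x$ and to the other one of $U,L$. Since the cycle is $x,y,A,B$, the vertex $x$ is adjacent to $B$ but not $A$, while $y$ is adjacent to $A$ but not $B$. So $N(x)\setminus\cF_i$ lies in exactly one of $U$ and $L$, $N(y)\setminus\cF_i$ lies in the other, and both are non-empty because the torso edges exist. This already yields \Cref{ob_flag} for these vertices and shows that their classifications differ. Hence exactly one of $x,y$ is the upper vertex and the other is the lower vertex.

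The main point needing care is the identification of $A$ and $B$ with the unions of preceding and succeeding flags. This requires that the component of $T-\cS_i$ containing $\cS_{i-1}$ has as leaves exactly the vertices of $\cF_1\cup\dots\cup\cF_{i-1}$. I would justify it by noting that the leaves attached to $\cS_j$ are exactly its single vertices, that is, the flag $\cF_j$. Every leaf of $T$ hangs on some internal node, and the components of $T-\cS_i$ other than the leaves $x,y$ are the subpaths $\cS_1\dots\cS_{i-1}$ and $\cS_{i+1}\dots\cS_n$ together with their pendant leaves. With that in place, the rest is immediate from the $C_4$ structure.
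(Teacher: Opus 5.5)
Your proof is correct and follows the route the paper has in mind: the paper gives no argument beyond ``by the property of torsos'', and you unpack this by using that $\cS_i$ is a $C_4^2$-torso whose two non-singleton parts are, via the representation tree, the unions of the preceding and the succeeding flags. The $C_4$ structure then forces the two adjacent single vertices to have their outside neighbours in different parts, and your identification of the leaves pendant at $\cS_j$ with $\cF_j$ (using that every internal node has degree at least $4$) supplies the step the paper leaves implicit.
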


	
	\subsection{$2$-separations in path-like graphs}
    Let $G$ be a path-like graph and let $\mathcal{C}$ be a path-like maximal family of nested tight cuts in $G$.
    
	\begin{lemma}\label{lem:decisive}
		If $G$ has a $C_k$-torsoid with $k\ge6$, then there exists a decisive $2$-separation of $G$.
	\end{lemma}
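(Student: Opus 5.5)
The plan is to build the separation directly from the cyclic torsoid, taking two suitable singleton vertices of $G$ as the separator.

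Let $\mathcal{T}=(H,\varepsilon)$ be a torsoid with $H=C_k$ and $k\ge 6$. By \Cref{rem:cyclic_torsoids}\ref{cleave-number}, $\mathcal{T}$ is cleaved by $\frac k2-1\ge 2$ $C_4$-torsos of $\mathcal{C}$. By \Cref{ob_path-like}, every torso of $\mathcal{C}$ is a $C_4^2$- or $C_4^3$-torso, so each cleaving torso $\mathcal{S}_i$ has at least two adjacent single vertices.

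\emph{Step 1: single torso vertices are torsoid vertices.} Since $\mathcal{S}_i$ cleaves $\mathcal{T}$, each vertex of $\mathcal{S}_i$ contains a vertex of $H$. A single vertex $\{x\}$ of $\mathcal{S}_i$ can only contain the nonempty tight set $\{x\}$, so $\{x\}\in V(H)$. Hence the flags of the cleaving torsos give at least four singleton vertices of $H$, at least two from each of two distinct torsos $\mathcal{S}_i,\mathcal{S}_j$.

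\emph{Step 2: choose the separator.} I want two singletons $\{x\},\{y\}\in V(H)$ such that both components (arcs) of $H-\{x\}-\{y\}$ contain at least two vertices of $H$. For $k=6$ this means $x$ and $y$ are antipodal on $H$. To locate them, I would analyse the cyclic order of $H$ using how the tight sets of $\mathcal{S}_i$ and $\mathcal{S}_j$ refine one another along the path $T[\mathcal{I}(T)]$. A cut $\partial(P)\in\mathcal{C}$ with $P$ a union of consecutive vertices of $H$ splits $H$ into two arcs, and the adjacent singletons of $\mathcal{S}_i$ and of $\mathcal{S}_j$ lie in different such arcs. I would then take $x$ from $\mathcal{S}_i$ and $y$ from $\mathcal{S}_j$, each on the "far" side, i.e.\ chosen so that the other adjacent singleton of its own torso lies in a different arc of $H-\{x\}-\{y\}$ from the other torso's pair. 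Each arc then contains at least one of these remaining singletons and at least one further vertex of $H$.

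\emph{Step 3: define the sides.} Let $R_1,R_2$ be the two arcs of $H-\{x\}-\{y\}$. Put
\[
A=\{x,y\}\cup\bigcup R_1\cup\bigcup\{\varepsilon(e): e\in E(H),\ e \text{ has an end in } R_1\},
\]
and define $B$ analogously with $R_2$. Any part of $V(G)$ not yet covered is assigned to one of the two sides; by (T4) this can only be the $\varepsilon$-set of an edge $xy$, which does not occur when $x,y$ are nonadjacent in $H$. Then $A\cap B=\{x,y\}$, and both $A\setminus B$ and $B\setminus A$ contain at least two vertices, so the separation is decisive.

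\emph{Step 4: no edges between the proper sides.} This is the main obstacle. By (T6) there is no edge between nonadjacent vertices of $H$. The remaining risk is an edge leaving some $\varepsilon(vw)$ towards a vertex of $H$ or an $\varepsilon$-set on the other arc. I expect to rule this out with the maximality/passability clause (T5) together with \Cref{crossing_tight_sets}. Suppose an edge left $\varepsilon(vw)$ to some $z\notin v\cup\varepsilon(vw)\cup w$. Then the sets $v\cup\varepsilon(vw)$ and $w\cup\varepsilon(vw)$, which are tight by passability, would have an edge to a tight set that is nonadjacent in the cyclic structure. Uncrossing the tight sets would then yield a nontrivial tight cut in some torso, or contradict (T5)/(T7), either of which is a contradiction. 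So every edge of $G$ lies inside some $v\cup\varepsilon(vw)\cup w$ with $vw\in E(H)$, and therefore within $A$ or within $B$.
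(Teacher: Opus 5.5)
Your overall strategy is the paper's: take two torsos $\mathcal{S}_i,\mathcal{S}_j$ cleaving the $C_k$-torsoid and their adjacent single pairs $\{p_i,q_i\}$, $\{p_j,q_j\}$. These pairs are edges of $H$ by (T6), and they are disjoint because flags of distinct torsos are disjoint. You then delete two nonadjacent vertices among them.

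\textbf{Step~2.} The genuine problem is your selection rule. Let $a,b\ge 0$ count the vertices of $H$ in the two gaps between the pairs, so $a+b=k-4\ge 2$. If you delete one vertex of each pair so that the remaining singletons lie in different arcs, the arcs have sizes $a+1$ and $b+1$. Your claim that each arc also contains ``at least one further vertex of $H$'' therefore fails whenever $a=0$ or $b=0$, i.e.\ when the two pairs are consecutive on $H$.

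This case really occurs. Take $G=C_{10}$ with the path-like family whose flags are $\{c_1,c_2,c_3\},\{c_4,c_5\},\{c_6,c_7\},\{c_8,c_9,c_{10}\}$. The two middle $C_4^2$-torsos have forced pairs $\{c_4,c_5\}$ and $\{c_6,c_7\}$. Deleting $\{c_4,c_6\}$ or $\{c_5,c_7\}$ leaves the one-vertex arc $\{c_5\}$ resp.\ $\{c_6\}$. The correct choice is $\{c_4,c_7\}$: it puts \emph{both} remaining singletons in the same arc and uses the fact that the other gap has $k-4\ge 2$ vertices.

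So replace Step~2 by the plain count the paper uses. If $a,b\ge 1$, your rule works. If, say, $a=0$, delete the two outer ends of the pairs. The analysis of arcs via cuts in $\mathcal{C}$ is not needed.

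\textbf{Step~4.} This is the right concern; the paper simply asserts that deleting the pair disconnects $G$. However, the mechanism you sketch (uncrossing to get a nontrivial tight cut in a torso, or a violation of (T5)/(T7)) is not how it works, and as written it proves nothing. The direct argument is a single application of \Cref{crossing_tight_sets}. For $vw\in E(H)$, the sets $X=v\cup\varepsilon(vw)$ and $X'=V(G)\setminus(w\cup\varepsilon(vw))$ are tight, by passability in (T5) and by complementation. Their intersection $X\cap X'=v$ is odd. Hence there is no edge between $X\setminus X'=\varepsilon(vw)$ and $X'\setminus X=V(G)\setminus(v\cup\varepsilon(vw)\cup w)$. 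Together with (T6), every edge of $G$ lies inside some $v\cup\varepsilon(vw)\cup w$, which is exactly what your Steps~3--4 need.

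With these two repairs your proof is complete, and it is more explicit than the printed one.
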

	\begin{proof}
		Let $\cT=(H, \edgefkt{})$ be a $C_k$-torsoid in $G$ with $k\ge6$. By \Cref{rem:cyclic_torsoids}-\Cref{cleave-number}, there exist two distinct torsos $\cS_1$ and $\cS_2$ both cleaving $\cT$ in $G$. Then \Cref{ob_path-like}-\Cref{obs:adjacent} ensures that each torso contains a pair of adjacent single vertices, say $p_1q_1\in \cS_1$ and $p_2q_2\in \cS_2$, which implies $p_1q_1,p_2q_2\in H$. Since $H$ is $C_k$ with $k\ge6$, every other vertex of $H$ lies between a vertex in $p_1q_1$ and a vertex in $p_2q_2$. Thus, deleting a suitable pair of two non-adjacent vertices from $\{p_1,q_1,p_2,q_2\}$ disconnects the graph $G$, leaving each side with at least two vertices.
        This implies that we can deduce a decisive $2$-separation of $G$ with the separator in $\{p_1,q_1,p_2,q_2\}$.
	\end{proof}
	
	\begin{proposition}\label{prop:decisive_distinct_flags}
		The vertices of the separator of every decisive $2$-separation are contained in distinct flags of $\mathcal{C}$.
	\end{proposition}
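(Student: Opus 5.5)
Suppose, for a contradiction, that $\{A,B\}$ is a decisive $2$-separation of $G$ whose separator $\{x,y\}=A\cap B$ lies in a single flag $\mathcal{F}_i$. The plan is to show that $G-x-y$ is connected; then one of $A\setminus B$, $B\setminus A$ is empty, contradicting $|A\setminus B|,|B\setminus A|\ge 2$. Here a $2$-separation is meant in the undirected sense: no edge joins $A\setminus B$ to $B\setminus A$. The argument splits according to whether $xy$ is an edge of $G$.

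\textbf{Case 1: $xy\in E(G)$.} Then \Cref{obs:deletion_of_flag_edge} says directly that $G-x-y$ is connected, and we are done.

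\textbf{Case 2: $xy\notin E(G)$.} This is possible only when $|\mathcal{F}_i|=3$, i.e.\ $i\in\{1,n\}$. Indeed, for $1<i<n$ the two single vertices of the $C_4^2$-torso $\mathcal{S}_i$ are adjacent by \Cref{ob_path-like}-\Cref{obs:adjacent}, so $\mathcal{F}_i=\{x,y\}$ is an edge. By symmetry we may take $i=1$. The three single vertices of the $C_4^3$-torso $\mathcal{S}_1$ form a path $x\,z\,y$ in the torso, since two non-adjacent vertices of a $C_4$ have a common neighbour. The fourth vertex of $\mathcal{S}_1$ is the tight set $V(G)\setminus\mathcal{F}_1$, which is adjacent to $x$ and $y$ in the torso. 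The middle vertex $z$ is the upper vertex of $\mathcal{F}_1$, so it is adjacent only to $x$ and $y$. Consequently, in $G-x-y$ the vertex $z$ is isolated, which makes $\{\{x,y,z\},V(G)\setminus\{z\}\}$ a $2$-separation. However, its side $\{x,y,z\}\setminus\{x,y\}=\{z\}$ has only one vertex, so this particular separation is not decisive.

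The real task in Case 2 is therefore to prove that $G-\mathcal{F}_1$ is connected. If so, every component of $G-x-y$ other than $\{z\}$ lies in $G-\mathcal{F}_1$, so there is exactly one such component. Then any $2$-separation with separator $\{x,y\}$ has a proper side contained in $\{z\}$, and so it is not decisive. For $n\ge 2$, connectivity of $G-\mathcal{F}_1$ follows from \Cref{ob_path-like}-\Cref{obs:segment_of_path} applied to the terminal segment $\mathcal{S}_2,\dots,\mathcal{S}_n$: the union of its flags is $V(G)\setminus\mathcal{F}_1$, and the subgraph it induces is connected. If $n=1$, then $G$ itself is the $C_4$, and the conclusion is trivial.

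The main obstacle is Case 2, specifically making precise that $z$ is the middle vertex of the path $x\,z\,y$ and that $z$ has no neighbours outside $\mathcal{F}_1$. I would obtain this from the definition of the upper vertex of $\mathcal{F}_1$, which immediately follows \Cref{ob_flag}. Case 1 is immediate from \Cref{obs:deletion_of_flag_edge}.
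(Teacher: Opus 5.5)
Your proof is correct and follows the same route as the paper. You split on whether $xy\in E(G)$, invoke \Cref{obs:deletion_of_flag_edge} in the adjacent case, and otherwise use $i\in\{1,n\}$ to force a proper side with at most one vertex. Your treatment of the non-adjacent case is more explicit than the paper's one-line assertion: it justifies that step by the isolation of the upper vertex $z$ in $G-x-y$ together with the connectivity of $G-\mathcal{F}_1$ from \Cref{ob_path-like}-\Cref{obs:segment_of_path}.
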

	\begin{proof}
		Let $(A, B)$ be a decisive $2$-separation in $G$ with $\{x, y\}:= A \cap B$. Note that \Cref{ob_path-like}-\Cref{ob_path1} ensures that $\cS_1$ and $\cS_n$ are $C_4^3$-torsos and any other $\cS_i$ is $C_4^2$-torso.
		Suppose for a contradiction that $x,y$ are contained in the same flag, say $\mathcal{F}_i$.
        If $xy\notin E(G)$, then $i\in \{1,n\}$, which means that one proper side of $(A,B)$ has size one, contradicting that $(A, B)$ is decisive.
        Otherwise, $\{x\},\{y\}$ are adjacent vertices of the corresponding torso $\cS_i$, which implies that $G - x - y$ is connected \cref{obs:deletion_of_flag_edge}.
		Then either $A$ or $B$ is the empty set, contradicting that $(A, B)$ is decisive.
	\end{proof}
	
	\begin{lemma}\label{lem-decisive-even}
		Every decisive $2$-separation in $G$ can be refined to a nontrivial even $2$-separation.
	\end{lemma}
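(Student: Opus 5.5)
Let $(A,B)$ be a decisive $2$-separation of $G$ with separator $\{x,y\}$. The plan is to keep the separator and only move components of $G-x-y$ between the two proper sides until both proper sides are even. The separation stays nontrivial as long as each side keeps at least one whole component, so the real content is a parity argument on the components of $G-x-y$.

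\textbf{Step 1 (parity).} Since $G$ is bipartite and matching covered (\Cref{ob_path-like}), it has a perfect matching and $|V(G)|$ is even. Hence $|A\setminus B|+|B\setminus A|=|V(G)|-2$ is even, so the two proper sides are either both even or both odd. If both are even we are done, so assume both are odd.

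\textbf{Step 2 (locating the separator).} By \Cref{prop:decisive_distinct_flags}, $x\in\cF_i$ and $y\in\cF_j$ with $i<j$. Using the linear order of flags (\Cref{ob_path-like}-\Cref{obs:order_of_path}), \Cref{ob_flag} and \Cref{ob_flag_2}, together with the connectivity of initial and terminal segments (\Cref{ob_path-like}-\Cref{obs:segment_of_path}), I would describe the components of $G-x-y$ as follows. There is one component containing the flags before $i$ together with the partner of $x$ in $\cF_i$ when that partner is attached upward. There is one component containing the flags after $j$. Finally there are the remaining vertices lying strictly between, meaning $\cF_{i+1},\dots,\cF_{j-1}$ and possibly the partners of $x$ and $y$.

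\textbf{Step 3 (choosing the refinement).} Because $x$ and $y$ lie in different colour classes or in the same one, a count of colour classes gives a constraint on the sizes of the components. Here one should use that each tight-set side of a tight cut is odd, and that $G$ is balanced bipartite. I would show that some single component $K$ of $G-x-y$ has even size and that $V(G)\setminus(K\cup\{x,y\})$ is nonempty. Given such a $K$, the pair $(K\cup\{x,y\},\,V(G)\setminus K)$ is a nontrivial even $2$-separation, and it refines $(A,B)$ in the sense that its separator is the same and one proper side is a union of components of the original side.

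The natural candidate for $K$ is the middle part between $\cF_i$ and $\cF_j$, since it consists of a union of whole size-$2$ flags, possibly adjusted by the partners of $x$ and $y$. Failing that, the initial or terminal part is the candidate: the initial segment has size $3+2(i-2)+\epsilon$, and its parity is controlled by whether the partner of $x$ is included.

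\textbf{Main obstacle.} The hard part is the case analysis in Step 3, namely showing that an even component always exists. This splits according to whether $x$ and $y$ are upper or lower vertices of their flags and whether $i$ or $j$ lies in $\{1,n\}$, where flags have size $3$. In each case I would compute the parity of the initial, middle and terminal pieces and check that at least one is even and nonempty while its complement side is still nonempty. Decisiveness of $(A,B)$ excludes the degenerate cases in which a piece would be a single vertex.
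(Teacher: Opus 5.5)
Your plan has a genuine gap. You keep the separator $\{x,y\}$ fixed, but an odd decisive $2$-separation need not admit any even $2$-separation with the same separator. So the even component $K$ you look for in Step~3 need not exist, whatever case analysis you do.

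Concretely, take $n=4$ with flags $\mathcal{F}_1=\{p,q,r\}$, $\mathcal{F}_2=\{u_2,l_2\}$, $\mathcal{F}_3=\{u_3,l_3\}$, $\mathcal{F}_4=\{p',q',r'\}$, and edges
\[
qp,\ qr,\ u_2l_2,\ u_3l_3,\ q'p',\ q'r',\ u_2p,\ u_2r,\ u_3r,\ l_2p',\ l_3r'.
\]
This graph has the following properties:
\begin{itemize}
\item It is bipartite with classes $\{p,r,l_2,l_3,q'\}$ and $\{q,u_2,u_3,p',r'\}$.
\item Every nonempty proper subset $S$ of the first class satisfies $|N(S)|\ge|S|+1$, so $G$ is matching covered.
\item The cuts $\partial(\mathcal{F}_1)$, $\partial(\mathcal{F}_1\cup\mathcal{F}_2)$ and $\partial(\mathcal{F}_1\cup\mathcal{F}_2\cup\mathcal{F}_3)$ are tight.
\item The resulting torsos are $C_4^3,C_4^2,C_4^2,C_4^3$, so $G$ is path-like.
\end{itemize}
Now take $x=u_2$ and $y=u_3$. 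Then $G-x-y$ has exactly two components, $\{p,q,r\}$ and $\{l_2,p',q',r',l_3\}$, of sizes $3$ and $5$. This gives a decisive odd $2$-separation. Every $2$-separation with separator $\{x,y\}$ has proper sides that are unions of components, so none of them is even.

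Your Step~2 picture of the components is also unreliable in general. The $C_4$-torsos force edges from $\bigcup_{k<m}\mathcal{F}_k$ to $\bigcup_{k>m}\mathcal{F}_k$ that skip $\mathcal{F}_m$. Hence the ``before $i$'' and ``after $j$'' parts need not be separate components.

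The missing idea is that the refinement must shift the separator by one vertex, which flips the parity of both proper sides at once. The paper's proof goes as follows.
\begin{itemize}
\item By the parity count you sketch, if $x$ is the lower vertex of $\mathcal{F}_i$ and $y$ the upper vertex of $\mathcal{F}_j$, then every component of $G-x-y$ is even. So, up to symmetry, $x$ is the upper vertex of $\mathcal{F}_i$.
\item The neighbours of $x$ are then lower vertices of $\mathcal{F}_i$ and vertices of $\bigcup_{k<i}\mathcal{F}_k$. The latter set is connected and avoids $y$, so it lies on one side.
\item Since $G-y$ is connected, $x$ has neighbours on both proper sides. Hence a lower vertex $z\in\mathcal{F}_i$ adjacent to $x$ lies on the opposite side. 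For $i=1$, the two lower vertices of $\mathcal{F}_1$ lie on different sides and $z$ is the one in $A$.
\item The swap $(A\setminus\{x\},B\cup\{z\})$ or $(A\cup\{z\},B\setminus\{x\})$ is a $2$-separation with separator $\{y,z\}$ and even proper sides. It is nontrivial because the side losing $z$ had odd size at least $3$.
\end{itemize}
In the example above this yields separator $\{u_3,l_2\}$ with proper sides $\{u_2,p,q,r\}$ and $\{l_3,p',q',r'\}$.
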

	\begin{proof}
		Let $(A, B)$ be an odd decisive $2$-separation in $G$ with $\{x, y\}:= A \cap B$.
		By~\Cref{prop:decisive_distinct_flags}, we can assume that $x, y$ are contained in distinct flags, say $x \in \mathcal{F}_i$ and $y \in \mathcal{F}_j$ for $i < j$.
        \begin{claim} \label{clm:upper_or_lower}
			Either $x$ is the upper vertex of $\mathcal{F}_i$ or $y$ is the lower vertex of $\mathcal{F}_j$.
		\end{claim}
		\begin{claimproof}
			Suppose for a contradiction that $x$ is a lower vertex of $\mathcal{F}_i$ and $y$ is an upper vertex of $\mathcal{F}_j$.
            Then $\bigcup_{k \leq i} \mathcal{F}_k \setminus \{x\}$ induces a connected subgraph of $G$ by~\Cref{ob_path-like}-\Cref{obs:segment_of_path}, \Cref{ob_flag,ob_flag_2}, and is even by \Cref{ob_path-like}-\Cref{obs:order_of_path,ob_path1}.
            By a similar argument, $\bigcup_{k \geq j} \mathcal{F}_k \setminus \{y\}$ is an even set that induces a connected subgraph of $G$.
			Since any flag $\mathcal{F}_k$ with $i < k < j$ has even size and induces a connected subgraph by \Cref{ob_path-like}-\Cref{obs:segment_of_path,ob_path1}, the separation $(A, B)$ is even, a contradiction.
		\end{claimproof}
		By~\Cref{clm:upper_or_lower}, we can assume without loss of generality that $x$ is the upper vertex of $\mathcal{F}_i$.
		Since $G- y$ is connected, the vertex $x$ has to be incident with $A \setminus \{x,y\}$ and $B \setminus \{x,y\}$.
		If $i = 1$, then the two lower vertices of $\mathcal{F}_1$ belong respectively to $A$ and $B$, since $x$ is only adjacent to these two vertices. Let $z$ be the one in $A$. Then $(A \setminus \{x\}, B \cup \{z\})$ is a nontrivial even $2$-separation as desired.
		If $i > 1$, then let $z$ be the unique lower vertex of $\mathcal{F}_i$.
		By~\Cref{ob_path-like}-\Cref{obs:segment_of_path}, \Cref{ob_flag,ob_flag_2}, $\bigcup_{k < i} \mathcal{F}_k$ is either contained in $A$ or $B$. Since $x$ is only incident with $z$ and $\bigcup_{k < i} \mathcal{F}_k$, we can choose the desired $2$-separation as $(A \cup \{z\}, B \setminus \{x\})$ in the former case, and $(A \setminus \{x\}, B \cup \{z\})$ in the latter case. Here we are done.
	\end{proof}

    \section{Graphs with distinguished edge}\label{sec:sourcesink}


    
	In this section, we start to analyze the torsoids in path-like graphs without nontrivial even $2$-separation.
    Note that such graphs contain only $C_4$-torsoids by~\Cref{lem:decisive,lem-decisive-even}.
	We begin by considering the subclass of such graphs that have a \emph{distinguished edge}, as their torsoids have a slightly simpler structure:
	An edge is \emph{distinguished} if it has one endpoint in the flag $\cF_1$ and one endpoint in the flag $\cF_n$.
    Then in \Cref{sec:torsoidsG}, we turn our attention to arbitrary path-like graphs without nontrivial even $2$-separation.

    Let $G$ be a path-like graph with a distinguished edge and no nontrivial even $2$-separations, and let $\cC$ be a path-like maximal family of nested tight cuts in $G$.
    We remark that \Cref{ob_path-like,ob_flag,ob_flag_2} ensure that the graph $G$ is bipartite with partition classes $V_1$ and $V_2$, where $V_1$ consists of the lower vertices of the flags of $\cC$ and $V_2$ consists of the upper vertices of the flags of $\cC$.
    
	Let $e=st$ be a distinguished edge of $G$ with $s \in \cF_1$ and $t \in \cF_n$.
	By~\Cref{ob_path-like}, each $G[\cF_i]$ contains a unique edge $f_i$ for $i \in [n-1] \setminus \{1\}$ and also $G[\cF_1\setminus\{s\}]$ and $G[\cF_n\setminus\{t\}]$ contain unique edges $f_1$ and $f_n$, respectively. For each $i\in [n]$, denote the lower and upper vertex of $f_i$ in $G$ by $x_i^+\in V_1$ and $x_i^-\in V_2$, respectively.
	We define $M_{\cC}^e:=\{f_i: i \in [n]\} \cup \{e\}$.
	Note that $M_{\cC}^e$ is a perfect matching of $G$.

    \begin{proposition}\label{prop:unique_perfect_matching}
		$M_{\cC}^e$ is the unique perfect matching of $G$ containing $e$.
	\end{proposition}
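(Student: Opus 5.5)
Let $M$ be any perfect matching of $G$ with $e\in M$. We show $M=M_{\cC}^e$ by induction along the linear order $\cS_1,\dots,\cS_n$ of \Cref{ob_path-like}-\Cref{obs:order_of_path}. The main tool is tightness: each cut $\partial(X)$ in $\cC$ meets $M$ in exactly one edge. For the induction we use the tight sets $X_k:=\bigcup_{j\le k}\cF_j$ for $k\in[n-1]$. These are unions of the flags of an initial segment of the path $T[\mathcal I(T)]$, so they are exactly the sides of the cuts corresponding to the edges of this path. Hence $\partial(X_k)\in\cC$ is tight.

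Since $e=st$ has $s\in\cF_1\subseteq X_k$ and $t\in\cF_n\not\subseteq X_k$, the edge $e$ lies in $\partial(X_k)$ for every $k\in[n-1]$. By tightness, $e$ is the only edge of $M$ in $\partial(X_k)$. Thus no other $M$-edge joins $X_k$ to its complement, for any $k$.

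We now show that every vertex other than $s,t$ is matched inside its own flag. Take $v\in\cF_i\setminus\{s,t\}$ with $M$-partner $w\in\cF_j$, and suppose $j\neq i$; say $j<i$. Then $vw\in\partial(X_j)$, and $vw\neq e$ because $v\notin\{s,t\}$. This contradicts the previous paragraph. The case $j>i$ is symmetric, using $\partial(X_i)$.

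It follows that $M\setminus\{e\}$ restricts to a perfect matching of $G[\cF_1\setminus\{s\}]$, of each $G[\cF_i]$ with $1<i<n$, and of $G[\cF_n\setminus\{t\}]$. Each of these vertex sets has size $2$, and each induced subgraph contains the unique edge $f_i$. So the restriction is forced to be $\{f_i: i\in[n]\}$, and therefore $M=M_{\cC}^e$.

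The only step needing care is identifying the tight sets $X_k$ with the cuts of $\cC$ along the path. This follows from the representation tree (\Cref{representation_tree}): removing the edge between consecutive internal vertices $\cS_k$ and $\cS_{k+1}$ separates the leaves into $\alpha^{-1}(X_k)$ and its complement. Since $\cS_k$ and $\cS_{k+1}$ are $C_4^2$- or $C_4^3$-torsos, this split consists of the single vertices of $\cS_1,\dots,\cS_k$, which are exactly the flags $\cF_1,\dots,\cF_k$.
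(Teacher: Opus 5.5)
Your proof is correct, but it takes a different route from the paper's.

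\textbf{The paper's route.} The paper uses a local forcing argument. The upper vertex of $\cF_1$ is adjacent only to the two lower vertices of $\cF_1$. One of these is $s$, already covered by $e$, so $f_1\in M$. Inductively, the upper vertex of $\cF_{i+1}$ (other than $t$) is adjacent only to the lower vertex of $\cF_{i+1}$ and to vertices of $\cF_1,\dots,\cF_i$. All of the latter are already covered by $e,f_1,\dots,f_i$, so $f_{i+1}\in M$.

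\textbf{Your route.} You use the tightness of the $n-1$ cuts $\partial(X_k)\in\cC$ along the path. Since $e$ crosses all of them, every other edge of $M$ must stay inside a flag, and this follows at once rather than by induction. Your argument avoids the upper/lower classification of \Cref{ob_flag,ob_flag_2}, which the paper asserts without proof. It replaces that classification by the defining property of tight cuts together with the representation tree.

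A minor imprecision: the leaves on one side of the tree edge $\cS_k\cS_{k+1}$ are exactly $\cF_1\cup\dots\cup\cF_k$ for a different reason than the one you give. The singleton parts of $\beta(v)$ are precisely the leaves adjacent to $v$, because every internal vertex has degree $|\beta(v)|\ge 4$. The $C_4^2$/$C_4^3$ type of the torsos plays no role there.

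\textbf{What the paper's route buys.} It uses only adjacencies, and these are unchanged when the edge $e$ is added. That is why the paper can reuse the same proof verbatim for $G^+$ in \Cref{prop:unique_perfect_matching_2}. Your argument does not transfer there directly. There you would need the cuts of $\cC^+$ to be tight in $G^+$, but the paper only deduces this afterwards, from \Cref{prop:unique_perfect_matching_2} itself. Tightness in $G$ alone says nothing about perfect matchings of $G^+$ that contain $e$.
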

	\begin{proof}
		Let $M$ be an arbitrary perfect matching in $G$ that contains $e$.
		By definition, the upper vertex of $\cF_1$ is only incident with lower vertices of $\cF_1$.
		This implies that $M$ contains $f_1$.
		We assume that $M$ contains $f_1, \dots, f_i$ for some $i \in [n]$.
		If $i=n$, then $M = M_{\cC}^e$, as desired.
		Otherwise, the upper vertex of $\cF_{i+1}$ (that is not $t$, if $i+1 = n$) is only incident with $f_{i+1}$ and edges with an endpoint in $\bigcup_{j \in [i]}\cF_j$ by \Cref{ob_flag,ob_flag_2,ob_path-like}.
		This shows $f_{i+1} \in M$ and we continue recursively.
	\end{proof}

    \begin{corollary}\label{cor:m_alternating_cycle}
		Every $M_\cC^e$-alternating cycle contains $e$.
	\end{corollary}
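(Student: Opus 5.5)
The plan is to derive the statement directly from \Cref{prop:unique_perfect_matching} via symmetric differences. Suppose for a contradiction that $C$ is an $M_\cC^e$-alternating cycle with $e \notin E(C)$. Consider
\[
M' := M_\cC^e \triangle E(C).
\]

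First, I will check that $M'$ is a perfect matching of $G$. This is the standard fact for alternating cycles. Every vertex of $C$ is incident with exactly one edge of $C$ lying in $M_\cC^e$ and exactly one edge of $C$ not in $M_\cC^e$. Swapping these edges therefore keeps each vertex of $C$ covered exactly once, and vertices off $C$ are unaffected. (Since $G$ is bipartite, $C$ is even, although alternation alone already forces this.)

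Next, because $e \notin E(C)$, the edge $e$ is not touched by the symmetric difference, so $e \in M'$. The cycle $C$ contains at least one non-matching edge, and that edge lies in $M'$ but not in $M_\cC^e$. Hence $M' \neq M_\cC^e$.

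This gives a second perfect matching of $G$ containing $e$, which contradicts the uniqueness in \Cref{prop:unique_perfect_matching}. So every $M_\cC^e$-alternating cycle contains $e$.

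There is no real obstacle here. The only point needing care is that an alternating cycle alternates all the way around, so that its matching edges in $M_\cC^e$ and its non-matching edges each form perfect matchings of $V(C)$. This is immediate from the definition of an $M$-alternating path, applied cyclically.
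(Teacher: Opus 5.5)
Your proposal is correct and follows the paper's proof: taking the symmetric difference of $M_\cC^e$ with an alternating cycle that avoids $e$ yields a second perfect matching containing $e$, which contradicts \Cref{prop:unique_perfect_matching}. Your explicit check that this new matching differs from $M_\cC^e$ is a detail the paper leaves implicit.
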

	\begin{proof}
		Suppose for a contradiction that $C$ is an $M_\cC^e$-alternating cycle that avoids $e$.
		Then the symmetric difference of $C$ and $M_{\cC}^e$ is a perfect matching in $G$ that contains $e$, contradicting~\Cref{prop:unique_perfect_matching}.
	\end{proof}

	Given an $M_\cC^e$-alternating $s$--$t$~path $P$ in $G-e$, let $\Delta(P):= (E(P) \cup \{e\}) \triangle M_\cC^e$.
	\begin{proposition}\label{prop:alternating_s_t_paths}
		The function $\Delta$ maps the $M_\cC^e$-alternating $s$--$t$~paths in $G-e$ one-to-one to the perfect matchings in $G-e$.
	\end{proposition}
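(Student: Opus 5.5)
We argue directly with the structure of alternating paths, treating $\Delta$ as the composite of two bijections: alternating $s$--$t$ paths in $G-e$ correspond to $M_\cC^e$-alternating cycles through $e$, and symmetric difference with $M_\cC^e$ maps these cycles to perfect matchings avoiding $e$. Throughout, an $M_\cC^e$-alternating $s$--$t$ path $P$ in $G-e$ is understood to start and end with a matching edge. Since $s$ and $t$ are covered only by $e$ in $M_\cC^e$, the first and last edges of $P$ are the matching edges $M_\cC^e$-partners of $s$ and $t$ would have to be $e$; so $P$ must instead start and end with non-matching edges, and all its interior matching edges lie in $M_\cC^e\setminus\{e\}$. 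Consequently $C:=E(P)\cup\{e\}$ is an $M_\cC^e$-alternating cycle containing $e$.

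\emph{Well-definedness.} For such a cycle $C$, the set $C\triangle M_\cC^e$ is a perfect matching: vertices off $C$ keep their $M_\cC^e$-edge, and each vertex of $C$ loses its $M_\cC^e$-edge on $C$ and gains its other $C$-edge. Because $e\in C\cap M_\cC^e$, we get $e\notin\Delta(P)$, so $\Delta(P)$ is a perfect matching of $G-e$.

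\emph{Injectivity.} $\Delta(P)\triangle M_\cC^e=E(P)\cup\{e\}$, so $P$ is recovered as $(\Delta(P)\triangle M_\cC^e)\setminus\{e\}$.

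\emph{Surjectivity.} This is the step that needs care. Let $N$ be a perfect matching of $G-e$. Then $N\triangle M_\cC^e$ is a disjoint union of $M_\cC^e$-alternating cycles, and it contains $e$ because $e\in M_\cC^e\setminus N$. By \Cref{cor:m_alternating_cycle}, every such cycle contains $e$. Since the cycles are disjoint, there is exactly one, call it $C$, so $N\triangle M_\cC^e = C$. Deleting $e$ from $C$ leaves an $s$--$t$ path $P$ in $G-e$, alternating with respect to $M_\cC^e$, and $\Delta(P)=C\triangle M_\cC^e=N$.

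The crux is that $N\triangle M_\cC^e$ is a single cycle rather than a union of several, and this is exactly what \Cref{cor:m_alternating_cycle} provides. Everything else is routine bookkeeping of symmetric differences.
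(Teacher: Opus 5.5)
Your proof is correct and follows the paper's route. Perfect matchings of $G-e$ correspond to $M_\cC^e$-alternating cycles through $e$, with \Cref{cor:m_alternating_cycle} forcing $N\triangle M_\cC^e$ to be a single cycle, and deleting $e$ turns these into the alternating $s$--$t$ paths; you also spell out well-definedness and injectivity, which the paper leaves implicit. The one slip is your opening convention that $P$ starts and ends with a matching edge: it contradicts your own (correct) observation that both end edges of $P$ are non-matching because the only $M_\cC^e$-edge at $s$ and $t$ is $e\notin E(G-e)$, so that sentence should simply be dropped.
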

	\begin{proof}
		Let $M$ be any perfect matching of $G-e$. 
        \cref{prop:unique_perfect_matching} ensures that $M$ is distinct to $M_\cC^e$. Note that the symmetric difference of two distinct perfect matchings is a vertex-disjoint union of alternating cycles. By \cref{cor:m_alternating_cycle}, $M$ is equal to the symmetric difference of $M_\cC^e$ and an $M_\cC^e$-alternating cycle containing $e$. This means there exits a one-to-one correspondence between perfect matchings of $G-e$ and $M_\cC^e$-alternating cycles containing $e$. 
        Then by removing the edge $e$, such cycles correspond one-to-one to $M_\cC^e$-alternating $s$--$t$~paths in $G-e$. This completes the proof.
	\end{proof}

    \subsection{Source-sink model}

    Let $D(G,M_\cC^e)$ be the $M_{\cC}^e$-direction of $G$ with the direction from $V_1$ to $V_2$.
	\begin{definition}[Source-sink model]\label{def-ssm}
        We construct a directed graph from $D(G,M_\cC^e)$ by splitting the vertex $v_e$ (which results from contracting $e$) into a source $s$ and a sink $t$. In this process, all out-edges of $v_e$ are assigned to $s$, and all in-edges to $t$. The resulting graph is called \emph{source-sink model} of $G$ and is denoted by $D_\cC^e$.
	\end{definition}
	The construction of the source-sink model of $G$ ensure that we can represent the torsoids of $G$ and their interactions in a simple way, which we will elaborate in this section. First, we examine some properties of the source-sink model.
    
    For each $i\in[n]$, let $x_i$ be the vertex in $D_\cC^e$ contracted by $f_i$. We note that all $x_i$ correspond to vertices of $D(G,M_\cC^e)$.
    Furthermore, we note that $s$ is a lower vertex of $\cF_1$ and $t$ is an upper vertex of $\cF_n$.

    \begin{proposition}\label{remark:alternating_path}
        Every $M_\cC^e$-alternating $s$--$t$~path in $G-e$ traverses every edge $f_i$ of $M_\cC^e$ from upper to lower vertex in the flag $\cF_i$, and every edge $f=uv$ of $E(G) \setminus M_\cC^e$ from $u$ as a lower vertex of some flag $\cF_j$ and $v$ as an upper vertex of a distinct flage $\cF_k$ with $j<k$.
    \end{proposition}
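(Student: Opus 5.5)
Since $G$ is bipartite with classes $V_1$ (lower vertices) and $V_2$ (upper vertices), every edge joins a lower vertex to an upper vertex. The plan is to walk along an $M_\cC^e$-alternating $s$--$t$ path $P = s\,u_1 u_2 \dots t$ in $G-e$, using the orientation given by bipartiteness, and apply \Cref{ob_flag,ob_flag_2} at each step.

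First I fix the parities along $P$. The start $s$ is a lower vertex of $\cF_1$ (in $V_1$) and is left unmatched by $P$, so the first edge of $P$ is a non-matching edge. Because $P$ alternates, its edges run non-matching, matching, non-matching, and so on. The end $t$ is an upper vertex of $\cF_n$ (in $V_2$), so the last edge is also non-matching. Each non-matching edge of $P$ therefore goes from a vertex of $V_1$ to a vertex of $V_2$ in the direction of travel. Each matching edge $f_i$ goes from $V_2$ to $V_1$, i.e.\ from $x_i^-$ (upper) to $x_i^+$ (lower), which is the first half of the claim.

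Next I treat the non-matching edges. Take such an edge $uv$ of $P$ with $u \in V_1$ a lower vertex of $\cF_j$ and $v \in V_2$ an upper vertex of $\cF_k$. If $j=k$, then $uv$ is an edge inside a flag. For $1<j<n$ the only such edge is $f_j$, which is a matching edge. For $j \in \{1,n\}$, an edge inside the flag is either $f_1$ or $f_n$, or it is incident with $s$ or $t$. Since $s$ and $t$ are endpoints of $P$, the only such edges $P$ uses are its first and last edges. Those run from $s$ to the upper vertex of $\cF_1$, which is matched to $x_1^+$, and symmetrically at $t$.

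This endpoint case is the main obstacle, and I will handle it by direct checking. In $\cF_1=\{s,x_1^+,x_1^-\}$, the vertex $x_1^-$ is the upper vertex adjacent only to $s$ and $x_1^+$. If $P$ starts with $s x_1^-$, it must continue with $f_1$ to $x_1^+$ and then leave $\cF_1$. So the first edge $s x_1^-$ is an edge within $\cF_1$. I will therefore read the statement's phrase ``distinct flag'' as applying to all non-matching edges other than these two boundary edges at $s$ and $t$, or else check whether the paper's upper/lower conventions at $\cF_1$ and $\cF_n$ exclude this case. In the remaining case $j\ne k$, \Cref{ob_flag} says that the neighbours of $u$ outside its flag lie entirely in succeeding flags, since $u$ is a lower vertex. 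Hence $k>j$, which gives the second half of the claim.

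Finally I sanity-check the result against \Cref{prop:unique_perfect_matching}. The argument shows that the flag index never decreases along $P$, and $P$ must climb from $\cF_1$ to $\cF_n$, which is consistent with that proposition. It also means that for intermediate flags the whole proof is just the bipartite orientation combined with \Cref{ob_flag,ob_flag_2}.
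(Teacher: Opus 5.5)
Your argument is the paper's proof, which only cites \Cref{ob_flag,ob_flag_2,ob_path-like}, written out in full: the lower/upper bipartition fixes the direction of every edge of the path, and \Cref{ob_flag} forces $j<k$ for every non-matching edge whose endpoints lie in distinct flags. The boundary case you flag is genuine, and no convention excludes it: $x_1^-$ is the middle single vertex of the $C_4^3$-torso $\cS_1$, so $sx_1^-\in E(G)$; since $G$ is matching covered, some perfect matching contains $sx_1^-$ and hence avoids $e$, so by \Cref{prop:alternating_s_t_paths} some $M_\cC^e$-alternating $s$--$t$~path starts with this non-matching edge inside $\cF_1$ (symmetrically, some path ends with $x_n^+t$ inside $\cF_n$). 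The clause ``distinct flag with $j<k$'' therefore needs exactly your exception, which the paper's one-line proof glosses over, whereas what is used later (the lower-to-upper orientation, and the strict increase of the flag index between consecutive matching edges of the path) is exactly what you prove.
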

    \begin{proof}
        By \Cref{ob_flag,ob_flag_2,ob_path-like}.
    \end{proof}

    \begin{corollary}\label{alternating_to_dipath}
        There exists a one-to-one correspondence between $M_\cC^e$-alternating $s$--$t$~paths in $G-e$ and directed $s$--$t$~paths in $D_\cC^e$.
    \end{corollary}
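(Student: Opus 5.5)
The natural approach is the standard correspondence between $M$-alternating paths and directed paths in the $M$-direction, with care at the split vertex $v_e$. Take an $M_\cC^e$-alternating $s$--$t$ path $P$ in $G-e$. Because $s$ and $t$ are matched by $e\notin E(P)$, the first and last edges of $P$ are non-matching. So $P$ has the form
\[ s,\; x_{i_1}^-,\; x_{i_1}^+,\; x_{i_2}^-,\; x_{i_2}^+,\;\dots,\; x_{i_m}^-,\; x_{i_m}^+,\; t, \]
where the edges $x_{i_j}^-x_{i_j}^+ = f_{i_j}$ are the matching edges and all other edges are non-matching. By \Cref{remark:alternating_path}, each $f_{i_j}$ is traversed from upper to lower vertex. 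Each non-matching edge is traversed from a lower vertex, in $V_1$, to an upper vertex, in $V_2$. Here $s$ counts as a lower vertex of $\cF_1$ and $t$ as an upper vertex of $\cF_n$.

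\textbf{From alternating paths to directed paths.} Define $\Phi(P)$ by contracting each $f_{i_j}$ to $x_{i_j}$. This gives the vertex sequence $s, x_{i_1}, \dots, x_{i_m}, t$ in $D_\cC^e$.

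\textbf{Consecutive pairs are arcs.} Take two consecutive vertices in this sequence. The corresponding $G$-edge is non-matching and goes from a lower vertex in $V_1$ to an upper vertex in $V_2$. In $D(G,M_\cC^e)$ such an edge is directed from the contracted vertex containing the $V_1$-endpoint to the one containing the $V_2$-endpoint. For the edge leaving $s$, the arc leaves $v_e$, so it is an out-edge of $v_e$ and is assigned to $s$ in $D_\cC^e$. Symmetrically, the edge entering $t$ is an in-edge of $v_e$ and is assigned to $t$.

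\textbf{$\Phi(P)$ is a directed path.} The $x_{i_j}$ are distinct because the $f_{i_j}$ are distinct edges of the path $P$. None of them equals $s$ or $t$, since $s,t$ are endpoints of $P$ and are not covered by any $f_i$. Hence $\Phi(P)$ is a directed $s$--$t$ path.

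\textbf{Inverse map.} Take a directed $s$--$t$ path $Q=s,y_1,\dots,y_m,t$ in $D_\cC^e$. Each $y_j$ is some $x_i$ and not $v_e$, because $s$ has no in-edges and $t$ has no out-edges in $D_\cC^e$. Each arc $y_jy_{j+1}$ comes from a $G$-edge from $V_1\cap y_j = \{x_i^+\}$ to $V_2\cap y_{j+1}$. Expand each $y_j=x_i$ back into the matching edge $x_i^-x_i^+$ and insert these underlying non-matching edges. This gives a walk $s,x_{i_1}^-,x_{i_1}^+,\dots,t$ that alternates between non-matching and matching edges. It does not use $e$, because every arc of $Q$ is incident with some $x_i$ or is an arc $s\to t$, and the latter comes from an edge of $G$ other than $e$.

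\textbf{Parallel arcs.} One caveat: if $D$ keeps parallel arcs, each arc corresponds to a unique $G$-edge, so the expansion is still well defined.

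\textbf{Main obstacle.} The step needing most care is checking that this walk is a path, and that the two constructions are mutually inverse. Vertex-distinctness follows from the distinctness of the $y_j$: each $G$-vertex other than $s,t$ lies in exactly one $f_i$, and $s,t$ appear only at the ends. The two maps invert each other by construction. We should also confirm that under the bipartition convention the non-matching edges are oriented $V_1\to V_2$, which agrees with the direction in $D(G,M_\cC^e)$. This is where \Cref{remark:alternating_path} is used.
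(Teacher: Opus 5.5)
Your proposal is correct and takes the same approach as the paper, which proves this corollary in one line by citing \Cref{def-ssm}, \Cref{remark:alternating_path} and the definition of $D(G,M_\cC^e)$. Your contraction/expansion argument, including the checks on vertex-distinctness and on the arcs at $s$ and $t$, is a careful unpacking of that citation.
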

    \begin{proof}
        Directly by \Cref{def-ssm,remark:alternating_path} and the definition of $D(G,M_\cC^e)$.
    \end{proof}
    
	\begin{proposition}\label{prop:acyclic_ordering}
		The source-sink model $D_\cC^e$ is an acyclic directed graph.
        Furthermore, $s,x_1,x_2,\cdots,x_n,t$ is an acyclic ordering of $D_\cC^e$.
	\end{proposition}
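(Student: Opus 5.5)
It suffices to show that every arc of $D_\cC^e$ goes forward in the ordering $s,x_1,\dots,x_n,t$, i.e.\ from an earlier vertex to a later one. Acyclicity follows immediately from this, since a directed cycle would have to contain an arc going backwards in a linear order. So the plan is to list the arcs of $D_\cC^e$ and check the direction of each, using the flag structure from \Cref{ob_flag,ob_flag_2,ob_path-like}.

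First recall what the arcs are. In $D(G,M_\cC^e)$ the vertices are the matching edges $f_1,\dots,f_n,e$. Every non-matching edge $uv$ with $u\in V_1$ and $v\in V_2$ gives an arc from the matching edge covering $u$ to the matching edge covering $v$. Splitting $v_e$ sends every out-arc of $v_e$ to $s$ and every in-arc to $t$. Consequently $s$ has no in-arcs and $t$ has no out-arcs. This already places $s$ first and $t$ last, as the ordering requires, and it leaves only arcs between two vertices $x_j,x_k$, arcs from $s$, and arcs into $t$.

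Next, orient each arc using the flags. Take a non-matching edge $uv$ with $u\in V_1$ a lower vertex of a flag $\cF_j$ and $v\in V_2$ an upper vertex of a flag $\cF_k$.
\begin{itemize}
\item By \Cref{ob_flag,ob_flag_2} and the definition of upper and lower vertices, a lower vertex has all its neighbours outside its own flag in succeeding flags, so $j<k$ whenever $j\ne k$. This is exactly \Cref{remark:alternating_path}.
\item If $j=k$, the edge lies inside a flag. For $1<j<n$ the only edge inside $\cF_j$ is $f_j$, which is a matching edge, so this case cannot occur. For $\cF_1$ and $\cF_n$ the edges inside the flag are $f_1$ or $f_n$ and edges incident with $s$ or $t$.
\end{itemize}
With $u$ covered by $f_j$ or by $e$, and $v$ covered by $f_k$ or by $e$, the arc therefore goes from $x_j$ to $x_k$ with $j<k$.

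The remaining arcs involve $s$ or $t$, and these need care. Since $s\in\cF_1$ is a lower vertex, any arc leaving $s$ goes to some $x_k$, and this is fine because $s$ comes first. Since $t\in\cF_n$ is an upper vertex, any arc entering $t$ comes from some $x_j$, and this is fine because $t$ comes last. Two degenerate cases must be ruled out separately:
\begin{itemize}
\item An arc from $x_1$ to $x_1$ would have to come from an edge $x_1^+x_1^-$ other than $f_1$, which is impossible in a simple graph.
\item Edges between $s$ and the upper vertex $x_1^-$ of $\cF_1$ become an arc from $s$ to $x_1$, which is forward. Edges between $x_n^+$ and $t$ become an arc from $x_n$ to $t$, which is also forward.
\end{itemize}

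The main obstacle is carrying out this bookkeeping correctly at the boundary flags $\cF_1$ and $\cF_n$, which have three vertices. The key points are that the vertex of $\cF_1$ only adjacent within $\cF_1$ is $x_1^-$, and that $s$ is a lower vertex while $t$ is an upper vertex. With these facts in hand, every arc runs strictly forward in $s,x_1,\dots,x_n,t$, and both claims of the proposition follow.
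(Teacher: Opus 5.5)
Your proof is correct. For the ordering you do the same check as the paper: by \Cref{ob_flag}, every non-matching edge joins a lower vertex of some $\cF_j$ to an upper vertex of some $\cF_k$ with $j<k$. Hence every arc between $x_j$ and $x_k$ points forward. You are in fact more careful than the paper at the boundary flags. You treat the arc $s\to x_1$ coming from the edge $sx_1^-$, the arc $x_n\to t$ coming from $x_n^+t$, and the absence of loops, all of which the paper passes over silently.

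Two wording fixes. The sentence concluding that the arc goes from $x_j$ to $x_k$ should exclude $u=s$ and $v=t$, which you handle afterwards. The loop argument you give for $x_1$ applies verbatim to $x_n$.

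The genuine difference is in the first claim. The paper proves acyclicity separately, by a matching argument. A directed cycle in $D_\cC^e$ would lift to an $M_\cC^e$-alternating cycle of $G$ avoiding $e$, which contradicts \Cref{cor:m_alternating_cycle}, that is, the uniqueness of the perfect matching containing $e$ (\Cref{prop:unique_perfect_matching}). Only then does the paper verify the ordering. You instead note that exhibiting an acyclic ordering already rules out directed cycles, so a single flag-based verification proves both statements.

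Your route is shorter and self-contained, and it makes the paper's first paragraph logically redundant. The paper's route has a different merit: it shows acyclicity follows purely from uniqueness of the perfect matching containing $e$, without reference to the flag ordering at all.
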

	\begin{proof}
		Suppose for a contradiction that $D_\cC^e$ contains a cycle $C$.
		Then $C$ corresponds to an $M_{\cC}^e$-alternating cycle $C'$ in $G$ that avoids $e$, contradicting \cref{cor:m_alternating_cycle}.

        We now turn to the furthermore part.
        \Cref{def-ssm} ensures that all edges incident with $s$ are out-going and all edges incident with $t$ are in-going.
		Let $x_ix_j$ be an arbitrary edge in $D_\cC^e - s - t$. Then $x_ix_j$ corresponds to an edge in $G$ between the lower vertex of $\mathcal{F}_i$ and the upper vertex of $\mathcal{F}_j$ by the definition of $D(G,M_\cC^e)$. This implies that $i < j$ by \Cref{ob_flag}, which completes the proof.
	\end{proof}
    Note that $s$ is the only source and $t$ is the only sink of $D_\cC^e$ by \Cref{ob_flag,ob_flag_2}.

    \begin{corollary}\label{cor:path_from_s_and_to_t}
        For every vertex $x \in V(D_\cC^e)$ there is a directed $s$--$x$~path and a directed $x$--$t$~path in $D_\cC^e$.
    \end{corollary}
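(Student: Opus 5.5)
The plan is to combine the acyclicity of $D_\cC^e$ from \Cref{prop:acyclic_ordering} with the remark that $s$ is the unique source and $t$ is the unique sink.

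For a vertex $x \in V(D_\cC^e)$, we build a walk backwards from $x$. If $x = s$, the trivial path suffices. Otherwise $x$ is not a source, so it has an in-neighbour $x'$. If $x' \neq s$, then $x'$ again has an in-neighbour, and we continue in this way. Since $D_\cC^e$ is acyclic, no vertex repeats along this backward sequence; in fact, by the ordering $s,x_1,\dots,x_n,t$, each step strictly decreases the position. As $D_\cC^e$ is finite, the process must terminate, and it can only terminate at a vertex with no in-edges. The only such vertex is $s$. Reversing the sequence gives a directed $s$--$x$~path. Symmetrically, following out-edges forward from $x$ yields a directed $x$--$t$~path, because $t$ is the only sink.

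The one step requiring care is the claim that $s$ and $t$ are the only source and sink, which the paper notes follows from \Cref{ob_flag,ob_flag_2}. I would verify it directly. Each $x_i$ comes from contracting $f_i = x_i^+x_i^-$. For $1<i<n$, \Cref{ob_flag} gives the upper vertex $x_i^-$ a neighbour in an earlier flag, and that edge is an in-edge of $x_i$. For $i=1$, the upper vertex of $\cF_1$ is adjacent only to the lower vertices of $\cF_1$, namely $s$ and $x_1^+$. Its edge to $s$ is therefore an in-edge of $x_1$ from $s$. The sink side is dual: use the lower vertices and later flags, and for $i=n$ the lower vertex of $\cF_n$ is adjacent to $t$.

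The remaining concern is degenerate cases, such as $n$ small or $\cF_1$ and $\cF_n$ interacting through $e$. These are harmless, since only the existence of at least one in-edge and one out-edge at each $x_i$ is used.
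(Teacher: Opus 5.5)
Your argument is correct and matches the paper, which gives no separate proof: the corollary is read off from the acyclicity in \Cref{prop:acyclic_ordering} together with the remark that $s$ is the only source and $t$ the only sink, exactly as in your backward and forward walk. One small point in your case check: the in-edge at $x_n$ (and, dually, the out-edge at $x_1$) is not covered by your cases $1<i<n$ and $i=1$, but the same use of \Cref{ob_flag} gives it, since the upper vertex $x_n^-\neq t$ of $\cF_n$ has a neighbour in an earlier flag and the lower vertex $x_1^+\neq s$ of $\cF_1$ has a neighbour in a later flag.
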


    \begin{proposition}\label{prop:source_sink_no_2_separation}
        There exist two internally-disjoint directed $s$--$t$~path in $D_\cC^e$.
    \end{proposition}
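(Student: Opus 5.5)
By Menger's theorem for directed graphs, it suffices to show that no single vertex $x \in V(D_\cC^e)\setminus\{s,t\}$ meets every directed $s$--$t$~path. Note that $s$ and $t$ are not adjacent in $D_\cC^e$: the only edge of $G$ between the two endpoints of $e$ is $e$ itself, and it is contracted. So suppose for a contradiction that some $x_i$ separates $s$ from $t$. Let $A'$ be the set of vertices of $D_\cC^e$ reachable from $s$ in $D_\cC^e - x_i$, and let $B'$ be the remaining vertices other than $x_i$. By \cref{cor:path_from_s_and_to_t}, $A'$ contains $s$ and $B'$ contains $t$, and no edge goes from $A'$ to $B'$.

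I would then transfer this cut back to $G$. Let $A$ be the set of vertices of $G$ lying in the preimages of $A'\cup\{x_i\}$, where $s$ and $t$ stand for the two endpoints of $e$ and each $x_j$ for the two endpoints of $f_j$. Let $B$ be defined likewise from $B'\cup\{x_i\}$. I do not take the separator to be $\{x_i^+,x_i^-\}$, because that pair is a flag edge and would give a trivial separation by \Cref{obs:deletion_of_flag_edge}. Instead, I aim for an undirected $2$-separation of $G$ whose separator is $\{x_i^-\}$ together with the endpoint of $e$ that lies in $\cF_n$, or symmetrically. The reasoning is as follows. Every edge of $G-e$ with both endpoints outside $f_i$ corresponds to a directed edge of $D_\cC^e$. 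Such an edge runs from lower vertices to upper vertices of later flags (\Cref{remark:alternating_path}). Edges from $B'$ back to $A'$ are still allowed in the directed picture, and these are exactly the edges that must be absorbed by a suitably chosen separator.

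The cleaner route, and the one I would actually try, is to use the correspondence of \Cref{tightsettoseparation}. The directed $1$-separation $(A'\cup\{x_i\},B'\cup\{x_i\})$ of $D_\cC^e$ becomes a directed $1$-separation of $D(G,M_\cC^e)$ once $s$ and $t$ are re-identified, because $s\in A'$ and $t\in B'$ merge into $v_e$. Because $s$ and $t$ lie on opposite sides, however, re-identifying them destroys this separation. Instead I would read off a set $X=\bigcup_{x_j\in A'}f_j \cup \{x_i^-\}\cup\{s\}$ and check directly that every edge leaving $X$ is incident with $x_i^-$ or with $t$'s endpoint. This yields a $2$-separation of $G$ with separator consisting of $x_i^{-}$ and an endpoint of $e$. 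Both proper sides have even size: each $f_j$ contributes two vertices, and the proper sides differ from unions of matching edges by one vertex from each of the pairs $f_i$ and $e$. The proper sides are nonempty because $A'$ contains $s$ together with its matching-partner structure in $\cF_1$, and $B'$ contains $t$. This contradicts the assumption that $G$ has no nontrivial even $2$-separation.

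The main obstacle is bookkeeping which endpoints of $e$ and $f_i$ go where, so that the edges of $G$ crossing between the proper sides are exactly those incident with the chosen separator. Here I would use the bipartition: all non-matching edges go from $V_1$ (lower vertices) to $V_2$ (upper vertices), and directed edges correspond to lower-to-upper edges. Hence a backward edge $B'\to A'$ in $D_\cC^e$ is an edge from a lower vertex in $B$ to an upper vertex in $A$. To exclude such edges I would place the upper vertices of $B'$-pairs and the lower vertices of $A'$-pairs appropriately. Concretely, I would take the separator to be $\{x_i^-,x_i^+\}$ replaced by the pair $\{x_i^-, \text{lower endpoint of } e\}$ and verify the edge conditions case by case. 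If this fails, I would fall back to arguing via \Cref{lem-decisive-even}: first produce any decisive $2$-separation, then refine it.
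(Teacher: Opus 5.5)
Your route differs from the paper's. The paper takes a perfect matching of $G-e$ through $f_i$ and converts it via $\Delta$ into an alternating $s$--$t$ path avoiding $x_i$. You instead try to turn a cut vertex $x_i$ of $D_\cC^e$ into a nontrivial even $2$-separation of $G$. Your route is the one that visibly uses the standing hypothesis, and that hypothesis is genuinely needed. For $G=C_6$, which is path-like with a distinguished edge, $D_\cC^e$ is the directed path $s\to x_1\to x_2\to t$, and $f_1$ lies in no perfect matching of $G-e$.

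However, as written the proposal does not establish the separation. Write $\widehat{Y}$ for the vertices of $G$ represented by $Y\subseteq V(D_\cC^e)$, and let $X=\widehat{A'}\cup\{x_i^-\}$. The decisive step is that every edge leaving $X$ is $e$ or $f_i$. You only announce this (``check directly''), and you assert the opposite for arcs from $B'$ to $A'$, proposing to absorb them into the separator. That cannot work with two vertices once there are several such edges.

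The missing idea is acyclicity (\Cref{prop:acyclic_ordering}) combined with \Cref{cor:path_from_s_and_to_t}:
\begin{itemize}
\item Every $a\in A'$ reaches $t$, and since no arc goes from $A'$ to $B'$, such a path passes through $x_i$.
\item Every $b\in B'$ is reached from $s$, necessarily through $x_i$.
\item Hence an arc $b\to a$, $x_i\to a$ or $b\to x_i$ would close a directed cycle through $x_i$.
\end{itemize}
So all arcs into $x_i$ come from $A'$, all arcs out of $x_i$ go to $B'$, and there are no arcs between $A'$ and $B'$ in either direction. In $G$ this says exactly that $\partial(X)=\{e,f_i\}$, with $|X|=2|A'|$ and $|V(G)\setminus X|=2|B'|$ both even.

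Second, the separator choice and the nontriviality argument need fixing. Both separator vertices should lie on the same side of this $2$-edge cut. Your first choice $\{x_i^-,t\}$ leaves the odd set $\widehat{A'}$ as a proper side. The choice $\{s,x_i^-\}$ gives proper sides $\widehat{A'}\setminus\{s\}$ and $\widehat{B'}\cup\{x_i^+\}$, both even.

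Your justification that the first of these is nonempty is wrong. If $x_i=x_1$ and $N_G(s)=\{x_1^-,t\}$ (as in $C_6$), then $A'=\{s\}$ and that side is empty. In that case use the symmetric separator $\{x_i^+,t\}$, with proper sides $\widehat{A'}\cup\{x_i^-\}$ and $\widehat{B'}\setminus\{t\}$. Since $|A'|+|B'|=n+1\ge 3$, at least one of the two choices is nontrivial. With these two additions your argument is complete, and the fallback through \Cref{lem-decisive-even} is unnecessary.
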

    \begin{proof}
        Suppose not for a contradiction. Then Menger's theorem ensures the existence of a vertex $x_i \in V(D_\cC^e) \setminus \{s,t\}$, such that there is no directed $s$--$t$~path in $D_\cC^e - x_i$. By~\Cref{alternating_to_dipath}, every directed $s$--$t$~path in $D_\cC^e$ corresponds to an $M_\cC^e$-alternating $s$--$t$~path in $G-e$. Thus every $M_\cC^e$-alternating $s$--$t$~path avoiding $e$ contains $f_i$.
        Since $G$ is matching covered, there exists a perfect matching $N$ in $G-e$ that contains $f_i$.
        By~\cref{prop:alternating_s_t_paths}, there is an $M_\cC^e$-alternating $s$--$t$~path $P$ with $\triangle{P}=N$.
        Thus $P$ avoids $e$ and $f_i$ and thus, $P$ induces a directed $s$--$t$~path in $D_\cC^e - x_i$, a contradiction.
    \end{proof}

    

    \begin{corollary}\label{lem:two_disjoint_paths}
        There exist two internally disjoint $M_\cC^e$-alternating $s$--$t$~paths in $G$ that avoid~$e$.
    \end{corollary}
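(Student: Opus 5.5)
This corollary follows by transferring \Cref{prop:source_sink_no_2_separation} back to $G$ via the correspondence of \Cref{alternating_to_dipath}. The plan is as follows.

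\textbf{Step 1.} By \Cref{prop:source_sink_no_2_separation}, fix two internally disjoint directed $s$--$t$~paths $Q_1, Q_2$ in $D_\cC^e$.

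\textbf{Step 2.} By \Cref{alternating_to_dipath}, each $Q_j$ corresponds to an $M_\cC^e$-alternating $s$--$t$~path $P_j$ in $G-e$. So both $P_1$ and $P_2$ avoid $e$ by construction.

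\textbf{Step 3 (main point).} We must check that $P_1$ and $P_2$ are internally disjoint as paths in $G$. This means recalling how the correspondence is built. A directed path in $D_\cC^e$ lifts to $G$ by the following rules:
\begin{itemize}
\item each internal vertex $x_i$ of the directed path is expanded into the matching edge $f_i = x_i^- x_i^+$, traversed from upper to lower vertex as in \Cref{remark:alternating_path};
\item each arc is replaced by the corresponding non-matching edge of $G$;
\item the endpoints $s$ and $t$ of $D_\cC^e$ become the vertices $s$ and $t$ of $G$.
\end{itemize}
Hence the internal vertices of $P_j$ are exactly the ends $x_i^-, x_i^+$ of those $f_i$ with $x_i$ an internal vertex of $Q_j$. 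The edges $f_i$ are pairwise disjoint, and none of them meets $s$ or $t$, since $M_\cC^e$ is a perfect matching containing $e$. Therefore, if $Q_1$ and $Q_2$ share no internal vertex, then $P_1$ and $P_2$ share no internal vertex.

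The only subtlety is making sure that $s$ and $t$ in $D_\cC^e$ are not matching-contracted vertices. This holds because $e$ was contracted to $v_e$ and then split back into $s$ and $t$. So the lifted paths genuinely start at $s$, end at $t$, and use only the edges $f_i$ as matching edges.
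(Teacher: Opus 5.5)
Your proposal is correct and matches the paper's argument. The paper states this corollary without proof, as an immediate consequence of \Cref{prop:source_sink_no_2_separation} together with the path correspondence of \Cref{alternating_to_dipath}. Your Step 3 spells out the routine check the paper leaves implicit: each internal vertex $x_i$ of a directed path lifts to the matching edge $f_i$, these edges are pairwise disjoint and avoid $s$ and $t$, so internal disjointness carries over to $G$.
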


    \subsection{Torsoids}\label{sec:torsoidsGplus}
	In this subsection, we build a one-to-one correspondence between torsoids in $G$ and the following near partitions of $D_\cC^e$:
	\begin{definition}\label{def-partition-ssm}
        For every vertex $x$ in $D_\cC^e$ except the source $s$ and sink $t$, we define $\cP(x):=\{\{x\},I^x,I_p^x,\{i^x\},R^x,\{o^x\},O_p^x,O^x\}$ in $D_\cC^e$ as follows:
		\begin{itemize}
            \item $i^x$: The vertex of $V(D_\cC^e) \setminus \{x\}$ minimizing $\operatorname{dist}(i^x,x)$ such that there is no directed $s$--$x$~path in $D_\cC^e - i^x$.
            \item $o^x$: The vertex of $V(D_\cC^e) \setminus \{x\}$ minimizing $\operatorname{dist}(x,o^x)$ such that there is no directed $x$--$t$~path in $D_\cC^e - o^x$. 
			\item $I^x$: The set of vertices $v \in V(D_\cC^e) \setminus \{x\}$ such that there is no directed $v$--$t$~path in $D_\cC^e - x$.
			\item $O^x$: The set of vertices $v \in V(D_\cC^e) \setminus \{x\}$ such that there is no directed $s$--$v$~path in $D_\cC^e - x$.
            \item $I_p^x$: The set of all internal vertices of directed $i^x$--$x$~paths that are not in $I^x$.
            \item $O_p^x$: The set of all internal vertices of directed $x$--$o^x$~paths that are not in $O^x$.
			\item $R^x$: All remaining vertices of $D_\cC^e$.
		\end{itemize}
	\end{definition}

    \begin{proposition}\label{near-partition}
        The $\cP(x)$ is indeed a near-partition of $V(D_\cC^e)$.
    \end{proposition}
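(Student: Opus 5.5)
The plan is to check the two requirements for a near partition: the members of $\cP(x)$ cover $V(D_\cC^e)$, and they are pairwise disjoint. Empty members are allowed. Covering is immediate, because $R^x$ is defined as everything not yet covered. So the real work is (a) showing that $i^x$ and $o^x$ are well defined, and (b) a case check of pairwise disjointness. Throughout I will use the acyclic ordering $s,x_1,\dots,x_n,t$ of \Cref{prop:acyclic_ordering}, \Cref{cor:path_from_s_and_to_t}, and \Cref{prop:source_sink_no_2_separation}.

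For (a), the candidates for $i^x$ are the dominators of $x$: the vertices $v\neq x$ lying on every directed $s$--$x$ path. This set is nonempty because it contains $s$. Every dominator lies on every $s$--$x$ path, and by acyclicity all of them occur on each such path in the same order. So they form a chain, and the last one, the immediate dominator, is the unique vertex minimising $\operatorname{dist}(\cdot,x)$. Symmetrically, $o^x$ is the immediate post-dominator of $x$, which exists because $t$ is a candidate. In particular $i^x$ precedes $x$ and $o^x$ succeeds $x$ in the acyclic ordering, so $i^x\neq o^x$. Moreover, the initial segment of any $s$--$i^x$ path avoids $x$, and similarly for $o^x$--$t$ paths.

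For (b), most cases reduce to acyclicity. The member $\{x\}$ is disjoint from the others by definition.

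If $v\in I^x\cap O^x$, then by \Cref{cor:path_from_s_and_to_t} there is an $s$--$v$ path and a $v$--$t$ path. Both must pass through $x$, which yields a directed $x$--$v$ path and a directed $v$--$x$ path, hence a cycle, a contradiction.

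A vertex of $I_p^x\cup\{i^x\}$ precedes $x$, and a vertex of $O_p^x\cup\{o^x\}$ succeeds $x$. This gives disjointness of $\{i^x\}\cup I_p^x$ from $\{o^x\}\cup O_p^x$. The sets $I_p^x$ and $\{i^x\}$ are disjoint since $I_p^x$ consists of internal vertices of $i^x$--$x$ paths, and the same holds for $O_p^x$ and $\{o^x\}$.

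For $v\in I_p^x\cup\{i^x\}$, concatenate an $s$--$i^x$ path with the $i^x$--$v$ subpath. Since all these vertices precede $x$, the resulting $s$--$v$ path avoids $x$, so $v\notin O^x$. Symmetrically, $O_p^x\cup\{o^x\}$ is disjoint from $I^x$.

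$I_p^x\cap I^x=\emptyset$ and $O_p^x\cap O^x=\emptyset$ hold by definition.

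The remaining cases, which I expect to be the main obstacle, are $i^x\notin I^x$ and, symmetrically, $o^x\notin O^x$. For $i^x=s$, \Cref{prop:source_sink_no_2_separation} gives two internally disjoint $s$--$t$ paths, one of which avoids $x$, so $s\notin I^x$. For general $i^x$, suppose $i^x\in I^x$. Then every $s$--$x$ path passes through $i^x$, and every path leaving $i^x$ towards $t$ passes through $x$. I would take $U$ to be the set of vertices that lie strictly between $i^x$ and $x$, that is, on $i^x$--$x$ paths, and translate the pair $\{i^x,x\}$ back to $G$ via the flags: the vertex $i^x$ corresponds to an edge $f_j$ and $x$ to an edge $f_k$. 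The aim is to exhibit the matching edges belonging to $U$, together with suitable endpoints of $f_j$ and $f_k$, as one side of a $2$-separation of $G$ whose proper sides are unions of whole matching edges and hence even. If the separation is nontrivial, this contradicts the standing assumption that $G$ has no nontrivial even $2$-separation.

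I expect the delicate part to be choosing which endpoints of $f_j$ and $f_k$ form the separator, so that no edge of $G$ crosses. Here \Cref{remark:alternating_path} constrains the directions of the non-matching edges. A further point to check is the degenerate case where $U$ is empty, i.e.\ $i^x x$ is an arc that is the only route. This should be excluded either by showing that $G-e$ then fails to cover $f_k$ by a perfect matching, arguing via \Cref{prop:alternating_s_t_paths} as in the proof of \Cref{prop:source_sink_no_2_separation}, or by noting that $I^x$ would then not contain $i^x$ after all.
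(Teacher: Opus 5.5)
Your covering argument, your dominator-chain proof that $i^x$ and $o^x$ are well defined, and your disjointness checks based on reachability and the acyclic ordering are correct. They match the paper's proof, which does not address well-definedness at all. You also correctly identify the crux: $i^x\notin I^x$ and $o^x\notin O^x$ when $i^x\neq s$ and $o^x\neq t$.

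The paper disposes of this in one line, as a consequence of $s\notin I^x$. That is not a formal consequence. In the digraph with arcs $s\to a\to x\to t$ and $s\to b\to t$, there are two internally disjoint $s$--$t$ paths and $s\notin I^x$, yet $i^x=a\in I^x$. So the hypothesis that $G$ has no nontrivial even $2$-separation really must enter, as you intend.

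This is where your proposal has a genuine gap: for this step you only state an aim. Write $i^x=x_j$ and $x=x_k$. You do not prove the closure properties that would make your set a side of a separation, and you do not choose the separator. Both exits you propose for $U=\emptyset$ also fail:
\begin{itemize}
\item Nothing prevents $x_k$ from being the only out-neighbour of $x_j$ and $x_j$ the only in-neighbour of $x_k$. Then $x_j\in I^x$ genuinely holds.
\item $f_k$ does lie in a perfect matching of $G-e$. Indeed, $s\notin I^x$ gives a directed $s$--$t$ path avoiding $x_k$, and \Cref{alternating_to_dipath,prop:alternating_s_t_paths} turn it into such a matching.
\end{itemize}
With separator $\{x_j^+,x_k^-\}$, the choice under which the $U$-side is a union of whole matching edges as your sketch envisages, this case gives only a trivial separation.

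The step can be closed as follows. Let $U$ be the set of internal vertices of directed $x_j$--$x_k$ paths.
\begin{itemize}
\item[(i)] Every in-neighbour $w$ of a vertex $u\in U\cup\{x_k\}$ lies in $U\cup\{x_j\}$. Follow an $s$--$w$ path by $w\to u\to\dots\to x_k$. This path must meet $x_j$, and by acyclicity it does so at $w$ or before $w$.
\item[(ii)] Every out-neighbour $w$ of a vertex $u\in U\cup\{x_j\}$ lies in $U\cup\{x_k\}$. Otherwise no $w$--$t$ path meets $x_k$, since that would put $w$ into $U$. Then $x_j\to\dots\to u\to w\to\dots\to t$ avoids $x_k$, contradicting $x_j\in I^x$.
\end{itemize}
An arc $x_a\to x_b$ is the edge $x_a^+x_b^-$ of $G$. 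Hence (i) and (ii) say that $W:=\{x_j^+,x_k^-\}\cup\bigcup_{x_m\in U}\{x_m^+,x_m^-\}$ satisfies $\partial(W)=\{f_j,f_k\}$.

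Now take the \emph{other} endpoints as separator. Then $(W\cup\{x_j^-,x_k^+\},\,V(G)\setminus W)$ is a $2$-separation with proper sides $W$ and $V(G)\setminus(W\cup\{x_j^-,x_k^+\})$, the latter containing $s$ and $t$. Both sides are nonempty and even, which is a contradiction. This works uniformly, whether or not $U$ is empty. The case $o^x\notin O^x$ is symmetric.
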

    \begin{proof}
        \Cref{def-partition-ssm} ensures that $x$ and $R^x$ are disjoint from all other classes of $\cP(x)$.
        By construction, every element of $I^x \cup I_p^x \cup \{i^x\}$ can reach $x$ and every element of $O^x \cup O_p^x \cup \{o^x\}$ can be reached from $x$.
        This implies that the sets $I^x \cup I_p^x \cup \{i^x\}$ and $O^x \cup O_p^x \cup \{o^x\}$ are disjoint, $s\notin O^x \cup O_p^x \cup \{o^x\}$ and $t\notin I^x \cup I_p^x \cup \{i^x\}$ by \Cref{prop:acyclic_ordering}. 
        
        Furthermore, the set $I_p^x$ is disjoint from $I^x$ and $\{i^x\}$ by choice.
        By~\cref{prop:source_sink_no_2_separation}, $s \notin I^x$, which implies that the vertex $i^x$ is not contained in $I^x$.
        Thus the sets $I^x$, $I_p^x$ and $\{i^x\}$ are pairwise disjoint.
        By symmetry, also $O^x$, $O_p^x$ and $\{o^x\}$ are pairwise disjoint, which completes the proof.
    \end{proof}

    By \Cref{prop:acyclic_ordering}, we can deduce:
    \begin{observation}\label{obs:PX_properties}
    The following properties hold for $\cP(x)$: Let $v \in V(D_\cC^e)$ be arbitrary.
    \begin{enumerate}[label=(\arabic*)]
        \item\label{obs:PX1} The vertex $v$ is contained in $I^x \cup I_p^x$ if and only if $v$ is an internal vertex of some directed $i^x$--$x$~path.
        \item\label{obs:PX2} The vertex $v$ is contained in $O^x \cup O_p^x$ if and only if $v$ is an internal vertex of some directed $x$--$o^x$~path.
    \end{enumerate}
    \end{observation}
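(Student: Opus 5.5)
The plan is to prove \ref{obs:PX1} directly and then obtain \ref{obs:PX2} by symmetry, reversing all edges of $D_\cC^e$ and swapping the roles of $s$ and $t$. Under this reversal $i^x$ becomes $o^x$, $I^x$ becomes $O^x$ and $I_p^x$ becomes $O_p^x$, and \Cref{prop:acyclic_ordering} and \Cref{cor:path_from_s_and_to_t} are symmetric. We use throughout that, because $D_\cC^e$ is acyclic by \Cref{prop:acyclic_ordering}, the concatenation of a directed $a$--$b$ path and a directed $b$--$c$ path is again a directed path, since a repeated vertex would produce a directed cycle.

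The backward direction of \ref{obs:PX1} is immediate from the definitions. Let $v$ be an internal vertex of some directed $i^x$--$x$ path. If $v \notin I^x$, then $v \in I_p^x$ by the definition of $I_p^x$. Either way $v \in I^x \cup I_p^x$.

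For the forward direction, if $v \in I_p^x$ we are done by definition, so assume $v \in I^x$.
\begin{itemize}
\item By \Cref{cor:path_from_s_and_to_t} there is a directed $v$--$t$ path. Since $v \in I^x$, this path meets $x$, so its initial segment is a directed $v$--$x$ path $Q'$.
\item We claim that every directed $s$--$v$ path contains $i^x$. Suppose instead that some directed $s$--$v$ path $Q$ avoids $i^x$. The concatenation $QQ'$ is a directed $s$--$x$ path, so it must meet $i^x$, which therefore lies on $Q'$ strictly before $x$.
\item In that case $v$ reaches $i^x$ by a path avoiding $x$. If $i^x$ had a directed path to $t$ avoiding $x$, then so would $v$, contradicting $v \in I^x$. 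Hence $i^x \in I^x$.
\item This contradicts \Cref{near-partition}, whose proof shows $i^x \notin I^x$. So the claim holds.
\end{itemize}
Now take any directed $s$--$v$ path, which exists by \Cref{cor:path_from_s_and_to_t}. By the claim it contains $i^x$, so its terminal segment is a directed $i^x$--$v$ path. Concatenating this with $Q'$ gives a directed $i^x$--$x$ path through $v$. Finally, $v \neq x$ by definition of $I^x$, and $v \neq i^x$ since $i^x \notin I^x$. So $v$ is an internal vertex of this path.

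The main obstacle is the claim in the second bullet list, namely that $i^x$ precedes $v$ rather than lying between $v$ and $x$. The argument above settles it by reducing to the fact $i^x \notin I^x$ from \Cref{near-partition}, which in turn rests on \Cref{prop:source_sink_no_2_separation}. If that citation were too weak, I would instead argue via the minimality of $\operatorname{dist}(i^x,x)$ in the definition of $i^x$.
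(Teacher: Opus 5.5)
Your proof is correct and follows essentially the paper's route: the paper only states that the observation follows from acyclicity (\Cref{prop:acyclic_ordering}), immediately after \Cref{near-partition}, and your concatenation argument using $i^x\notin I^x$ spells that out. Keep the citation of \Cref{near-partition} rather than your proposed fallback via the minimality of $\operatorname{dist}(i^x,x)$, because the fallback cannot work: in the acyclic digraph with edges $s\to v\to a\to x\to t$ and $s\to b\to t$ one has $i^x=a\in I^x$ and the statement fails for $v$, so $i^x\notin I^x$ rests on the standing hypothesis that $G$ has no nontrivial even $2$-separation, not on the digraph structure alone.
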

    
	\begin{proposition}\label{prop:inoutsep}
		For a near partition $\cP(x)$, the following pairs are all directed $1$-separations of $D_\cC^e$:
        \begin{enumerate}
            \item[(P1)] $(\{x\} \cup I^x, V(D_\cC^e) \setminus I^x)$, $(V(D_\cC^e) \setminus I_p^x, \{i^x\} \cup I_p^x)$, $(V(D_\cC^e) \setminus (I^x \cup I_p^x),I^x \cup I_p^x \cup \{i^x\})$ and $(V(D_\cC^e) \setminus (\{x\} \cup I^x \cup I_p^x),\{x\} \cup I^x \cup I_p^x \cup \{i^x\})$;
            \item[(P2)] $(V(D_\cC^e) \setminus O^x, \{x\} \cup O^x)$, $(\{o^x\} \cup O_p^x, V(D_\cC^e) \setminus O_p^x)$, $(O^x \cup O_p^x \cup \{o^x\}, V(D_\cC^e) \setminus (O^x \cup O_p^x))$ and $(\{x\} \cup O^x \cup O_p^x \cup \{o^x\}, V(D_\cC^e) \setminus (\{x\} \cup O^x \cup O_p^x))$.
        \end{enumerate}
	\end{proposition}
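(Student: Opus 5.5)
Each of the eight pairs covers $V(D_\cC^e)$: in every case the two sides are complementary apart from the separator vertex. By \cref{near-partition} the separator in each case is the single listed vertex ($x$, $i^x$, $i^x$, $i^x$ respectively in (P1)), so each pair has order $1$. What remains is the edge condition: no edge may have its tail in $A\setminus B$ and its head in $B\setminus A$. Moreover, (P2) is the mirror image of (P1) under reversing all edges and swapping $s$ and $t$. Reversal sends $I^x,I_p^x,i^x$ to $O^x,O_p^x,o^x$ and turns a separation $(A,B)$ into $(B,A)$, so it suffices to treat (P1).

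\emph{First pair} $(\{x\}\cup I^x, V\setminus I^x)$. The claim is that no edge goes from $I^x$ to $V\setminus(I^x\cup\{x\})$. Suppose $uv$ is an edge with $u\in I^x$ and $v\notin I^x\cup\{x\}$. Then $v$ has a directed $v$--$t$ path avoiding $x$ (note $v=t$ is allowed), and prefixing $uv$ gives a $u$--$t$ path in $D_\cC^e-x$, a contradiction.

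\emph{Second pair} $(V\setminus I_p^x, \{i^x\}\cup I_p^x)$. Here we must rule out edges from $V\setminus(I_p^x\cup\{i^x\})$ into $I_p^x$. Let $w\in I_p^x$, so $w$ lies on a directed $i^x$--$x$ path, and let $uw$ be an edge with $u\neq i^x$. The key fact is that every directed $s$--$w$ path passes through $i^x$. Otherwise an $s$--$w$ path avoiding $i^x$, followed by the $w$--$x$ part of the $i^x$--$x$ path, would give an $s$--$x$ walk avoiding $i^x$; by acyclicity (\cref{prop:acyclic_ordering}) this walk is a path, contradicting the choice of $i^x$. So by \cref{cor:path_from_s_and_to_t}, every $s$--$u$ path meets $i^x$, and $u$ is reachable from $i^x$ and reaches $x$. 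Hence $u$ is an internal vertex of an $i^x$--$x$ path, and by \cref{obs:PX_properties}\ref{obs:PX1}, $u\in I^x\cup I_p^x$. It remains to exclude $u\in I^x$. Since $w\notin I^x$, $w$ has a $w$--$t$ path avoiding $x$, and then $u$ would too, contradicting $u\in I^x$. Thus $u\in I_p^x$.

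\emph{Third pair} $(V\setminus(I^x\cup I_p^x), I^x\cup I_p^x\cup\{i^x\})$. We rule out edges from $V\setminus(I^x\cup I_p^x\cup\{i^x\})$ into $I^x\cup I_p^x$. The same dominator argument works for any $w\in I^x\cup I_p^x$, since such $w$ lies on an $i^x$--$x$ path by \cref{obs:PX_properties}\ref{obs:PX1}. Hence every $s$--$w$ path uses $i^x$. Any in-neighbour $u\neq i^x$ of $w$ is then reachable from $i^x$ and reaches $x$ through $w$, so $u$ lies on an $i^x$--$x$ path and $u\in I^x\cup I_p^x$.

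\emph{Fourth pair.} This additionally puts $x$ on the right side, and we must exclude edges into $x$ from outside $I^x\cup I_p^x\cup\{i^x\}$. Such an in-neighbour $u\ne i^x$ of $x$ is either $s$ or reachable from $s$. If $u=s$, the edge $sx$ is an $s$--$x$ path avoiding $i^x$, impossible. Otherwise every $s$--$u$ path must meet $i^x$ (else we again get an $s$--$x$ path avoiding $i^x$), so $i^x\,\cdots\,u\,x$ is an $i^x$--$x$ path with internal vertex $u$, giving $u\in I^x\cup I_p^x$.

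The main obstacle is the recurring dominator step: showing that any vertex lying on an $i^x$--$x$ path has all its $s$-paths through $i^x$. This relies on the definition of $i^x$ (it separates $s$ from $x$) together with acyclicity, which lets us concatenate walks into paths. One also has to check the boundary cases $u=s$ and $v=t$ explicitly, as in the fourth pair.
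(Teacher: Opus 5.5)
Your proof is correct and follows essentially the same argument as the paper. The first pair is handled by prefixing the offending edge to a $v$--$t$ path avoiding $x$. The other (P1) pairs use that an in-neighbour $u \neq i^x$ of a vertex on an $i^x$--$x$ path is itself an internal vertex of an $i^x$--$x$ path, because $i^x$ separates $s$ from $x$ and $D_\cC^e$ is acyclic; in the second pair, $u \in I^x$ is then excluded via the head's path to $t$ avoiding $x$. (P2) follows by symmetry, and your explicit order check via \cref{near-partition} and the case $u = s$ are small details the paper leaves implicit.
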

	\begin{proof}
		Up to symmetry, it suffices to prove any pair in (P1) is a directed $1$-separation of $D_\cC^e$.
        
        \textbf{Case 1: $(\{x\} \cup I^x, V(D_\cC^e) \setminus I^x)$.}
        First, suppose for a contradiction that there exists an edge $f$ with tail in $I^x$ and head outside $I^x \cup \{x\}$.
        Let $y$ be the head of $f$.
        By the definition of $I^x$ in \Cref{def-partition-ssm}, there is no directed $y$--$t$~path in $D_\cC^e - x$, which means that $y\in I^x \cup \{x\}$, a contradiction. Thus $(\{x\} \cup I^x, V(D_\cC^e) \setminus I^x)$ is a directed $1$-separation of $D_\cC^e$.

        \textbf{Case 2: $(V(D_\cC^e) \setminus I_p^x, \{i^x\} \cup I_p^x)$.}
        Suppose that there exists an edge $f'$ with tail outside $\{i^x\} \cup I_p^x$ and head in $I_p^x$.
        Let $z$ be the tail of $f'$.
        Then there exists a directed $s$--$z$~path $P_1$ in $D_\cC^e$ since $s$ is the only source by \Cref{prop:acyclic_ordering}.
        By the definition of $I_p^x$ and the choice of $f'$, there exists a directed $z$--$x$~path $P_2$ in $D_\cC^e$ that avoids $i^x$.
        Note that $P_1$ and $P_2$ concatenate to a directed $s$--$x$~path by \Cref{prop:acyclic_ordering}, then $i^x$ is either the startvertex or an internal vertex of $P_1$ by the definition of $i^x$. This implies that $z$ is contained in $I_p^x\cup I^x$ by \Cref{obs:PX_properties}-\Cref{obs:PX1}. Since $z$ is the tail of $f'$, $z\in I^x$. Then there exists a directed $z$--$t$ path in $D_\cC^e$ that avoids $x$ since the head of $f'$ is in $I_p^x$, a contradiction to the definition of $I^x$. 
        This shows that $(V(D_\cC^e) \setminus I_p^x, \{i^x\} \cup I_p^x)$ is a directed $1$-separation of $D_\cC^e$.

        \textbf{Case 3: $(V(D_\cC^e) \setminus (I^x \cup I_p^x),I^x \cup I_p^x \cup \{i^x\})$ and $(V(D_\cC^e) \setminus (\{x\} \cup I^x \cup I_p^x),\{x\} \cup I^x \cup I_p^x \cup \{i^x\})$.}
        Let $A=I^x \cup I_p^x \cup \{i^x\}$ or $\{x\} \cup I^x \cup I_p^x \cup \{i^x\})$.
        Suppose that there exists an edge $f''$ with tail outside $A$ and head in $A\setminus \{i^x\}$. Let $z'$ be the tail of $f''$. 
        Similar to \textbf{Case 2}, $z'$ is contained in $I_p^x\cup I^x$, a contradiction. 
        This completes the proof.
	\end{proof}

    
    We call a directed $1$-separation $(A,B)$ of $D_\cC^e$ \emph{good} if $s$ and $t$ are in the same side, i.e.\ $s,t \in A$ or $s,t \in B$, while $A\neq\{s\}$ and $B\neq\{t\}$. Note that all directed 1-separations in \Cref{prop:inoutsep} are good.
    By~\cref{cor:path_from_s_and_to_t} we know:
    \begin{observation} \label{obs:s_and_t}
        Let $(A,B)$ be a good directed $1$-separation of $D_\cC^e$.
        If $s\in A \cap B$, then $t\in A \setminus B$ and if $t\in A \cap B$, then $s\in B \setminus A$.
    \end{observation}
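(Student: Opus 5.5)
The plan is to argue directly from the definitions, using \Cref{cor:path_from_s_and_to_t}: every vertex lies on a directed $s$--$t$ path in $D_\cC^e$ (an $s$--$v$ path followed by a $v$--$t$ path, which is again a path by \Cref{prop:acyclic_ordering}). By symmetry between the two statements, reversing all edges and swapping $s$ with $t$ and $A$ with $B$, it suffices to prove the first: if $s\in A\cap B$, then $t\in A\setminus B$.

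Let $(A,B)$ be good with $s\in A\cap B$. Then $A\cap B=\{s\}$, since a $1$-separation has a separator of size one. Goodness forces $s,t$ onto a common side, so $t\in A$ or $t\in B$, and $t\neq s$. It therefore remains to exclude $t\in B\setminus A$.

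Suppose $t\in B\setminus A$. Because $A\neq\{s\}$, there is some vertex $v\in A\setminus B$. By \Cref{cor:path_from_s_and_to_t} there is a directed $v$--$t$ path $P$. This path starts in $A\setminus B$ and ends in $B\setminus A$, and it cannot pass through $s$, since $s$ is a source and so has no in-edges. Hence every vertex of $P$ lies in $(A\setminus B)\cup(B\setminus A)$. Consequently some edge of $P$ has its tail in $A\setminus B$ and its head in $B\setminus A$, which contradicts $(A,B)$ being a directed separation. Therefore $t\in A\setminus B$.

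For the second statement, suppose $t\in A\cap B$, so the separator is $\{t\}$. Goodness gives $s\in A$ or $s\in B$. If $s\in A\setminus B$, use $B\neq\{t\}$ to pick $w\in B\setminus A$ and take a directed $s$--$w$ path. This path avoids $t$, since $t$ is a sink, so it again contains an edge from $A\setminus B$ into $B\setminus A$, a contradiction. Hence $s\in B\setminus A$.

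The only subtle point is checking that the paths avoid the separator vertex. This follows from $s$ being the unique source and $t$ the unique sink, as noted after \Cref{prop:acyclic_ordering}.
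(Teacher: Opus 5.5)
Your proof is correct and follows essentially the paper's route. The paper derives this observation directly from \Cref{cor:path_from_s_and_to_t}, and you simply spell out the details. A directed $v$--$t$ path (respectively $s$--$w$ path) avoids the separator because $s$ has no in-edges and $t$ has no out-edges. It would therefore have to cross from $A\setminus B$ to $B\setminus A$, which is impossible.
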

    Given a good directed $1$-separation $(A,B)$ of $D_\cC^e$,
    we define:
    \begin{equation*}
        \phi(A,B):=
        \begin{cases}
            ((A\setminus\{s,t\})\cup{\{v_e\}}, B), & \text{ if }s,t\in A \setminus B;\\
            (A, (B\setminus\{s,t\})\cup{\{v_e\}}), &\text{ if } s,t\in B \setminus A;\\
            ((A\setminus\{s,t\})\cup \{v_e\}, (B\setminus \{s\})\cup \{v_e\}), &\text{ if } s \in A \cap B, t\in A \setminus B;\\
            ((A\setminus \{t\})\cup \{v_e\}, (B\setminus \{s,t\})\cup \{v_e\}), &\text{ if } t\in A \cap B, s \in B \setminus A,
        \end{cases}
    \end{equation*}
    where $v_e$ is the vertex corresponding to $e$ in $D(G,M_\cC^e)$.


    \begin{proposition}\label{prop:surmap}
        The function $\phi$ is a surjective map from the set of good directed $1$-separations of $D_\cC^e$ to the set of directed $1$-separations of $D(G,M_\cC^e)$.
    \end{proposition}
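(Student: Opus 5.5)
We prove the two halves separately: first that $\phi$ is well defined, i.e.\ that every good directed $1$-separation is sent to a directed $1$-separation of $D(G,M_\cC^e)$, and then that every directed $1$-separation of $D(G,M_\cC^e)$ has a good preimage. Throughout we use that $D(G,M_\cC^e)$ is obtained from $D_\cC^e$ by identifying $s$ and $t$ into $v_e$. The out-edges of $v_e$ are exactly those of $s$, and the in-edges of $v_e$ are exactly those of $t$.

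\textbf{Well-definedness.} Let $(A,B)$ be a good directed $1$-separation of $D_\cC^e$. By goodness $s,t$ lie on a common side, and by \Cref{obs:s_and_t} at most one of them is in $A\cap B$. Hence exactly the four cases in the definition of $\phi$ occur, and they are mutually exclusive.

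In each case the image $(A',B')$ satisfies $A'\cup B'=V(D(G,M_\cC^e))$, and $A'\cap B'$ is obtained from $A\cap B$ by replacing $s$ or $t$ with $v_e$ if needed. So the order stays $1$.

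It remains to check that no edge has tail in $A'\setminus B'$ and head in $B'\setminus A'$. Edges not incident with $v_e$ are unchanged, so only edges at $v_e$ need attention.
\begin{itemize}
\item In case one, $v_e\in A'\setminus B'$, so we consider an out-edge $v_ey$. It comes from an edge $sy$ with $s\in A\setminus B$, hence $y\notin B\setminus A$.
\item In case two the argument is symmetric, using in-edges coming from $t\in B\setminus A$.
\item In cases three and four, $v_e$ lies in the separator, so edges at $v_e$ impose no condition.
\end{itemize}

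\textbf{Surjectivity.} Let $(A',B')$ be a directed $1$-separation of $D(G,M_\cC^e)$. We split $v_e$ back into $s$ and $t$ according to where $v_e$ sits.
\begin{itemize}
\item If $v_e\in A'\setminus B'$, put both $s,t$ into $A$.
\item If $v_e\in B'\setminus A'$, put both into $B$.
\item If $v_e\in A'\cap B'$, put $s$ into $A\cap B$ and $t$ into $A\setminus B$, giving case three. Here the out-edges of $s$ are unconstrained, and the in-edges of $t$ point into $A$, which is harmless since $t$ is on the tail-receiving side.
\end{itemize}
In the first two cases, edges at $s$ and $t$ inherit the required property from those at $v_e$. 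The orders agree, and $\phi$ maps the constructed separation back to $(A',B')$.

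\textbf{Main obstacle: goodness of the preimage.} We must show the constructed $(A,B)$ satisfies $A\neq\{s\}$ and $B\neq\{t\}$. (The preimage is already a directed separation, as checked above.) In case three we have $t\in A$, so $A\neq\{s\}$, and $B=(B'\setminus\{v_e\})\cup\{s\}$. This equals $\{t\}$ only impossibly.

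The degenerate cases are where $A$ or $B$ becomes $\{s\}$ or $\{t\}$ only because $A'$ or $B'$ is tiny, for instance $A'=\{v_e\}$ or $B'=\{v_e\}$. To handle them we use the remaining choice: when $v_e$ is in the separator we may instead use case four. If both cases were degenerate, then $B'=\{v_e\}$ in one and $A'=\{v_e\}$ in the other. In that situation $(A',B')$ is a trivial separation, so we still produce a good preimage by choosing whichever of cases three and four fails to be degenerate. When $v_e\notin A'\cap B'$ we have $s,t$ on the same strict side, so $\{s\}$ or $\{t\}$ cannot be a whole side unless $|V|$ is tiny.

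This bookkeeping of the degenerate cases is the step needing care. The key fact we rely on is that the two choices for splitting a separator-$v_e$ cannot both be degenerate unless one of $A',B'$ equals $\{v_e\}$. Such a trivial separation is handled by picking the other case.
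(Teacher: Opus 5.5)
Your proof is correct and follows the paper's route. For well-definedness, only edges at $v_e$ can violate the separation property, and this can only happen when $s,t$ lie together in one strict side. For surjectivity, you split $v_e$ back into $s,t$ according to its position; using only the case-three preimage when $v_e$ is the separator suffices.

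The ``main obstacle'' paragraph should be deleted, because goodness of every constructed preimage is automatic. Either $s,t$ lie together in one strict side, so that side is not $\{s\}$ and the other side contains neither $s$ nor $t$, or $t\in A$ and $s\in B$. Hence no degenerate configurations exist: for instance, $A'=\{v_e\}$ simply gives $A=\{s,t\}$. The hedge ``unless $|V|$ is tiny'' is likewise unnecessary. That paragraph is also internally inconsistent: it assumes both splittings are degenerate and then chooses the non-degenerate one.
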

    \begin{proof}
        Let $(A,B)$ be a good directed $1$-separation of $D_\cC^e$.
        Suppose for a contradiction that there exists an edge $f$ in $D(G,M_\cC^e)$ with tail in the left side of $\phi(A,B)$ and head in the right side of $\phi(A,B)$ that avoids the separator of $\phi(A,B)$.
        Since $(A,B)$ is a directed $1$-separation, one endpoint of $f$ is $v_e$, which implies $v_e$ is not the separator of $\phi(A,B)$. 
        Then $s,t \in A \setminus B$ or $s,t \in B \setminus A$. Therefore, $f$ corresponds to an edge in $D_\cC^e$ with tail in $A \setminus B$ and head in $B \setminus A$, contradicting that $(A,B)$ is a directed $1$-separation.
        
        For the other direction, let $(A,B)$ be a directed $1$-separation of $D(G,M_\cC^e)$.
        If $v_e\in A \setminus B$, then $((A\setminus \{v_e\})\cup \{s,t\},B)$ is a good directed $1$-separation of $D_\cC^e$ satisfying $\phi((A\setminus \{v_e\})\cup \{s,t\},B)=(A,B)$. 
		If $v_e \in B \setminus A$, then $(A, (B \setminus \{v_e\}) \cup \{s,t\})$ is a good directed $1$-separation of $D_\cC^e$ satisfying $\phi(A, (B \setminus \{v_e\}) \cup \{s,t\})= (A,B)$.
        Otherwise, $v_e \in A \cap B$, then both $((A\setminus \{v_e\})\cup \{s,t\}, (B\setminus \{v_e\})\cup \{s\})$ and $((A\setminus \{v_e\})\cup \{t\}, (B\setminus \{v_e\})\cup \{s,t\})$ are distinct good directed $1$-separations of $D_\cC^e$, and both are preimages of $(A,B)$ under $\phi$.
        This completes the proof.
    \end{proof}
    Note that a directed 1-separation $(A,B)$ of $D(G,M_\cC^e)$ admits two preimages under $\phi$ in $D_\cC^e$ precisely when $v_e$ is the separator.
    
    Let $(A_1,A_2)$ be a good directed 1-separation of $D_\cC^e$. According to \Cref{tightsettoseparation,prop:surmap},
    the separation $(A_1,A_2)$ corresponds to a unique tight cut in $G$, which we denote by $\rho(A_1,A_2)$.
    Moreover, each side $A_i$ corresponds to a tight set in $G$, which we denote by $\sigma(A_i)$, such that $\rho(A_1,A_2) = \boundary{\sigma(A_1)}=\boundary{\sigma(A_2)}$.

    \begin{proposition}\label{prop:cut-side2}
		Let $(A_1,A_2)$ be a good directed $1$-separation of $D_\cC^e$. Every edge in $\rho(A_1,A_2)$ has one endpoint in $\sigma(A_1)$ as an upper vertex of some flag, and the other in $\sigma(A_2)$ as a lower vertex of some flag.
	\end{proposition}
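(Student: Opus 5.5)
We would prove this by making explicit how $\sigma$ turns sides of $D_\cC^e$ into vertex sets of $G$, and then checking where each cut edge can go. Throughout, $\sigma(A_i)$ is the set of vertices of $G$ covered by the matching edges whose contracted vertices lie in $A_i \setminus (A_1\cap A_2)$, together with exactly one endpoint of the matching edge contracted to the separator. By \Cref{tightsettoseparation} the tight set from the $1$-separation $\phi(A_1,A_2)$ of $D(G,M_\cC^e)$ is formed this way. The separator vertex $w$ (some $x_i$, or $v_e$ when $s$ or $t$ is the separator) has its matching edge split. The side containing the tail of the in-edges keeps the lower endpoint; the other side keeps the upper one. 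Concretely, the matching edge $f_i$ (or $e$) is split with its upper endpoint ($V_2$) in the side that receives edges into $w$, and its lower endpoint ($V_1$) in the other side.

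\textbf{Cut edges through the separator.} By tightness, the cut $\rho(A_1,A_2)$ consists of exactly the edges of $G$ crossing between $\sigma(A_1)$ and $\sigma(A_2)$. The matching edge of the separator is one such edge. Its endpoints are the lower vertex $x_i^+$ and upper vertex $x_i^-$ of $f_i$, or $s$ and $t$ for $e$; recall that $s$ is a lower vertex of $\cF_1$ and $t$ is an upper vertex of $\cF_n$. We would check that the lower endpoint lands in $\sigma(A_2)$ and the upper one in $\sigma(A_1)$.

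\textbf{Non-matching cut edges.} Each such edge $uv$ with $u\in V_1$ (lower) and $v\in V_2$ (upper) corresponds to a directed edge from the matching edge containing $u$ to the matching edge containing $v$. For $e$, the split into $s$ and $t$ means out-edges leave $s$ and in-edges enter $t$. Since $(A_1,A_2)$ is a directed separation, there is no edge from $A_1\setminus A_2$ to $A_2\setminus A_1$. Hence every crossing edge goes from the $A_2$ side to the $A_1$ side, possibly passing through the separator. So its tail endpoint $u$, which is lower, lies in $\sigma(A_2)$, and its head endpoint $v$, which is upper, lies in $\sigma(A_1)$.

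The separator cases must be handled carefully. When the tail is the separator $w$, the edge uses the lower endpoint of $w$'s matching edge, and that endpoint is placed in $\sigma(A_2)$. When the head is $w$, the edge uses the upper endpoint, placed in $\sigma(A_1)$. This is consistent with the assignment described at the start.

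\textbf{Main obstacle.} The main difficulty is pinning down the convention for $\sigma$ when the separator is $s$ or $t$ (via $v_e$), using the four cases of $\phi$ and \Cref{obs:s_and_t}. We would handle it by checking that splitting $v_e$ into $s$ and $t$ exactly matches splitting $e$ into its lower endpoint $s$ and upper endpoint $t$. With that established, the previous argument applies verbatim.
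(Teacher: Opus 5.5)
Your treatment of the cut edges other than the separator's matching edge is fine, and it is the same as the first part of the paper's proof. An edge between the proper sides must run from $A_2\setminus A_1$ to $A_1\setminus A_2$, so its lower end lies in $\sigma(A_2)$ and its upper end in $\sigma(A_1)$. The non-matching edges at the separator follow once you know how its matching edge is split. The gap is precisely that split. You assert that \Cref{tightsettoseparation} ``forms the tight set this way'' and then say you ``would check'' that the upper endpoint of $f_i$ lies in $\sigma(A_1)$, but you give no check. As stated in the paper, \Cref{tightsettoseparation} only provides a bijection; it says nothing about which endpoint of the separator's matching edge goes to which shore. Your verbal rule (``the side containing the tail of the in-edges'') is also not well defined, since in-neighbours of $w$ may lie on both sides. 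This placement is the actual content of the proposition, so by assuming it you assume the main part of what is to be shown. Relatedly, your ``main obstacle'' is misplaced. The case where the separator of $\phi(A_1,A_2)$ is $v_e$ takes one line via \Cref{obs:s_and_t}; the case $x=x_i$ is where work is needed.

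The missing idea is to derive the split from tightness together with goodness. Take $v$ in the proper side not containing $s,t$. By \Cref{cor:path_from_s_and_to_t} there are directed $s$--$v$ and $v$--$t$ paths. By acyclicity (\Cref{prop:acyclic_ordering}) exactly one of them passes through $x_i$, so the other uses an edge from $A_2\setminus A_1$ to $A_1\setminus A_2$. The paper then argues as follows: if $x_i^+\in\sigma(A_1)$ and $x_i^-\in\sigma(A_2)$, the $M_\cC^e$-alternating path through $f_i$ (traversed from $x_i^-$ to $x_i^+$ by \Cref{remark:alternating_path}) would yield a perfect matching meeting the tight cut $\rho(A_1,A_2)$ more than once. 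A cleaner finish uses a colour-class fact. In a bipartite matching covered graph, every edge of a tight cut has its end in a given shore in the same colour class: a colour count shows that the unique cut edge of any perfect matching has its end in the shore's majority class, and every edge lies in some perfect matching. The edge between the proper sides found above is a cut edge with its upper end in $\sigma(A_1)$, hence so is every cut edge, $f_i$ included. For $x=x_i$, only when the side away from $s$ and $t$ consists of the separator alone does the split genuinely reduce to the orientation convention of the correspondence. If you want to rely on such a convention there, you must state it explicitly and justify it from the cited source.
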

	\begin{proof}
		Let $x \in V(D(G,M_\cC^e))$ be the separator of $\phi(A_1,A_2)$ and let $e_x \in M_\cC^e$ be the corresponding edge in $G$. Then $e_x\in \rho(A_1,A_2)$.
        Let $f$ be an arbitrary edge in $\rho(A_1,A_2)\setminus \{e_x\}$. Then $f$ corresponds to an edge $f'\in E(A_1\setminus\{x\},A_2\setminus\{x\})$.
        By \Cref{di-separation}, $f'$ is an edge from $A_2\setminus\{x\}$ to $A_1\setminus\{x\}$, which implies that the endpoint of $f$ in $\sigma(A_1)$ is an upper vertex in some flag, and the other one in $\sigma(A_2)$ is a lower vertex of a distinct flag.

        Now we consider the case for the edge $e_x$.
        If $x=v_e$, then $e_x=e=st$. By \Cref{obs:s_and_t}, we have that $s\in \sigma(A_2)$ and $t\in \sigma(A_1)$. Note that $s$ is a lower vertex of $\cF_1$ and $t$ is an upper vertex of $\cF_n$.
        Otherwise, $x=x_i$ for some $i\in[n]$, then $e_x=f_i=x_i^+x_i^-$ with $x_i^+$ the lower vertex of $\cF_i$ and $x_i^-$ the upper vertex of $\cF_i$.
        Note that $x$ is also the separator of $(A_1,A_2)$, and both $s$ and $t$ are either in $A_1\setminus A_2$ or $A_2\setminus A_1$ by the definition of $\phi(A_1,A_2)$.
        Since $s$ is the only source and $t$ is the only sink of $D_\cC^e$, for any vertex $v$ that is not on the same side as $s$ and $t$, there exists a directed $s$--$v$ path $P_s$ and a directed $v$--$t$ path $P_t$ in $D_\cC^e$. By \Cref{prop:acyclic_ordering}, $P_s$ and $P_t$ are disjoint, which means one of them goes through $x$ and the other contributes an edge from $A_2\setminus A_1$ to $A_1\setminus A_2$.
        
        Next, we will show that $x_i^-\in \sigma(A_1)$ and $x_i^+\in \sigma(A_2)$. Suppose not for a contradiction. 
        If $s,t\in A_1\setminus A_2$, then $x$ is in $P_s$. By \Cref{alternating_to_dipath}, $P_s$ corresponds to a $M_\cC^e$-alternating $s$--$v$ path $P_s'$ in $G-e$. Moreover, $P_s'$ goes from $s$ to $x_i^-$ first, then to $x_i^+$, and goes from $x_i^+$ to $v$ by \Cref{remark:alternating_path}.
        Since $x_i^+\in \sigma(A_1)$ and $x_i^-\in \sigma(A_2)$, $|\Delta(P_s')\cap \rho(A_1,A_2)|>1$, a contradiction.
        If $s,t\in A_2\setminus A_1$, then $x$ is in $P_t$ and $P_t'$ is the corresponding $M_\cC^e$-alternating $v$--$t$ path in $G-e$.
        By a similar argument, one can deduce a contradiction as $|\Delta(P_t')\cap \rho(A_1,A_2)|>1$.
        This completes the proof.
	\end{proof}
	
    Now we can deduce the torsoid in $G$ corresponds to $\cP(x)$ in $D_\cC^e$.
	\begin{theorem}\label{ssm-torsoid}
        Any $\cP(x)$ in $D_\cC^e$ corresponds to a $C_4$-torsoid $\cT_x=(H,\varepsilon)$ in $G$ with $V(H) = \{v_1,v_2,v_3,v_4\}$ such that:
		\begin{itemize}
			\item $v_1:=\{x^+\}$,
            \item $v_2:=\{x^-\}$,
            \item $v_3:= \begin{cases}
                \bigcup_{x_i \in I_p^x} \{x_i^+, x_i^-\} \cup \{{i^x}^+\} & \text{ if } i^x \neq s,\\
                \bigcup_{x_i \in I_p^x} \{x_i^+, x_i^-\} \cup \{s\} & \text{ if } i^x = s,
            \end{cases}$
            \item $v_4:= \begin{cases}
                \bigcup_{x_i \in O_p^x} \{x_i^+, x_i^-\} \cup \{{o^x}^-\} & \text{ if } o^x \neq t,\\
                \bigcup_{x_i \in O_p^x} \{x_i^+, x_i^-\} \cup \{t\} & \text{ if } o^x = t,
            \end{cases}$
            \item $\varepsilon(v_1v_2)=\emptyset$,
			\item $\varepsilon(v_2v_3)=\bigcup_{x_i\in I^x}\{x_i^+, x_i^-\}$,
            \item $\varepsilon(v_1v_4)=\bigcup_{x_i\in O^x}\{x_i^+, x_i^-\}$,
            \item $\varepsilon(v_3v_4)=
            \begin{cases}
            \bigcup_{x_i\in R^x}\{x_i^+, x_i^-\} \cup \{{i^x}^-, {o^x}^+, s, t\} & \text{ if } i^x \neq s \text{ and } o^x \neq t,\\
            \bigcup_{x_i\in R^x}\{x_i^+, x_i^-\} \cup \{{o^x}^+, t\} & \text{ if } i^x = s \text{ and } o^x \neq t,\\
            \bigcup_{x_i\in R^x}\{x_i^+, x_i^-\} \cup \{{i^x}^-, s\} & \text{ if } i^x \neq s \text{ and } o^x = t,\\
            \bigcup_{x_i\in R^x}\{x_i^+, x_i^-\} & \text{ if } i^x = s \text{ and } o^x = t.
            \end{cases}
            $
		\end{itemize}
        Furthermore, this is a one-to-one correspondence.
	\end{theorem}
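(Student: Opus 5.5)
We verify that the pair $(H,\varepsilon)$ described in the statement satisfies (T1)--(T7) of \Cref{def-torsoid}, and then establish the bijection. Throughout we translate between $G$ and $D_\cC^e$ via $\phi$, $\rho$ and $\sigma$ (\Cref{prop:surmap,tightsettoseparation}).

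\textbf{Tightness and the near partition.} For (T2), each $v_j$ should be $\sigma$ of a side of a good directed $1$-separation from \Cref{prop:inoutsep}. Taking $A_2=\{i^x\}\cup I_p^x$ from (P1), $\sigma(A_2)$ consists of both endpoints of every $f_i$ with $x_i\in I_p^x$, together with only one endpoint of the separator edge. \Cref{prop:cut-side2} says this endpoint is the lower vertex ${i^x}^+$, or $s$ when $i^x=s$. This is exactly $v_3$, and $v_4$ is symmetric via (P2). The singletons $v_1=\{x^+\}$ and $v_2=\{x^-\}$ are trivially tight. For (T4), \Cref{near-partition} gives that $\cP(x)$ is a near partition. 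Blowing each class up to $G$, where $x_i\mapsto\{x_i^+,x_i^-\}$ and $s,t$ are distributed as in the statement, gives exactly $V(H)\cup\operatorname{Im}(\varepsilon)$. The split vertices $i^x,o^x$ contribute one endpoint to $v_3$ or $v_4$ and the other to $\varepsilon(v_3v_4)$, using \Cref{obs:s_and_t} for the placement of $s,t$.

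\textbf{Edges, passability and the $C_4$ structure.} For (T6), we show there is no edge between $v_1$ and $v_3$ or between $v_2$ and $v_4$. Any edge of $G$ outside $M_\cC^e$ goes from a lower vertex to a later upper vertex (\Cref{remark:alternating_path}). So an edge $x^+$--$v_3$ would give an arc from $x$ into $I_p^x\cup\{i^x\}$, which would create a directed cycle, contradicting \Cref{prop:acyclic_ordering}. For (T5), the sets $v_2\cup\varepsilon(v_2v_3)=\sigma(\{x\}\cup I^x)$ and $v_3\cup\varepsilon(v_2v_3)$ are tight by (P1). Edges exist because $x^-x^+\in E(G)$, and paths $I^x\to x$ and $i^x\to x$ provide the remaining edges. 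Maximality of each $\varepsilon$ is the main obstacle. Suppose a strictly larger passable set $S$ existed for $v_2,v_3$. Then $\sigma^{-1}$ of $v_2\cup S$ would yield a good $1$-separation whose side contains a vertex $w\notin I^x$ with a directed $w$--$t$ path avoiding $x$. We would combine that path with a path through $w$ into $x$ and apply \Cref{lem:two_disjoint_paths} and \Cref{prop:alternating_s_t_paths} to produce a perfect matching using two edges of the cut, a contradiction. The choice of $i^x$ minimizing the distance and the definition of $R^x$ are needed here, in particular to show that $R^x$ (with $s,t$) is the largest set passable for both $v_3$ and $v_4$. For (T7), a forbidden partition $P_1,P_2,P_3$ would yield a further nested tight cut refining the cycle, which would give a nontrivial even $2$-separation of $G$ or contradict the minimal choice of $i^x$ and $o^x$. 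We would argue this by comparing with \Cref{lem-decisive-even}. Finally, (T1): $H$ is a $4$-cycle $v_1v_2v_3v_4$, which is matching covered.

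\textbf{Bijection.} The map is injective because $x$ is recovered from $\{v_1,v_2\}=\{\{x^+\},\{x^-\}\}$. For surjectivity, every torsoid is a $C_4$-torsoid (\Cref{lem:decisive,lem-decisive-even}), and by \Cref{rem:cyclic_torsoids} it is cleaved by a torso $\cS_i$ of $\cC$. By \Cref{ob_path-like}\ref{obs:adjacent}, $\cS_i$ has adjacent singletons, which form $f_j$ for some $j$ or $e$ itself. We then show that the torsoid coincides with $\cT_{x_j}$, using uniqueness of torsoids cleaved by a torso. The case where the singletons form $e$ needs separate treatment: we would show that such a torsoid also has a single matching edge $f_j$ as two adjacent singleton vertices, by the symmetric roles of $s$ and $t$.
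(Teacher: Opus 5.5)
You handle (T2), (T4), (T6), the edges $v_1v_2$, $v_2v_3$, $v_1v_4$ in (T5), and the correspondence the same way the paper does. For maximality of $\varepsilon(v_2v_3)$ the paper argues more directly. Since $x^+x^-\in\partial(v_2\cup S)$, the separation has separator $x$. A directed $w$--$t$ path avoiding $x$, starting at some $w\in I_p^x\cup\{i^x\}$ on the $v_2\cup S$ side, then already violates it, so \Cref{lem:two_disjoint_paths} plays no role.

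The genuine gap is the edge $v_3v_4$, which is the hardest part of the proof and which you leave untreated. Saying that the choice of $i^x$ and the definition of $R^x$ ``are needed'' is not an argument. Even passability of $\varepsilon(v_3v_4)$ needs more than \Cref{prop:inoutsep}. The paper exhibits the good directed $1$-separation $\bigl(\{x\}\cup I^x\cup O^x\cup O_p^x\cup\{o^x\},\ R^x\cup I_p^x\cup\{i^x,o^x\}\bigr)$ with separator $o^x$. It is a separation because $x$ is the out-separator of $\{x\}\cup I^x$ and $o^x$ is the out-separator of $\{x\}\cup O^x\cup O_p^x\cup\{o^x\}$. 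Together with its mirror image with separator $i^x$, this lets one read off $v_3\cup\varepsilon(v_3v_4)$ and $v_4\cup\varepsilon(v_3v_4)$ as $\sigma$-sides via \Cref{prop:cut-side2}. You also omit the edges that (T5) requires here: $st$ if $i^x=s$, otherwise ${i^x}^+{i^x}^-$, and symmetrically at $o^x$.

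More seriously, your maximality argument does not transfer. It works for $v_2v_3$ only because $v_2=\{x^-\}$ anchors the separator at $x$. Take a strictly larger passable set $S'$ between $v_3$ and $v_4$, say meeting $v_4$. The tight set $Y=v_3\cup S'$ has no such anchor: the separator $z$ of the corresponding separation of $D(G,M_\cC^e)$ is a priori arbitrary, and if $z=v_e$ there are two preimages under $\phi$. The paper takes a preimage $(A_1',B_1')$ with separator $z'$ and proceeds in three steps.
\begin{enumerate}
\item It shows $Y=\sigma(B_1')$ rather than $\sigma(A_1')$. Otherwise every $i^x$--$x$ path meets $z'$, forcing $z'=i^x$. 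Then both ends of the $M_\cC^e$-edge represented by $z'$ (namely ${i^x}^+{i^x}^-$, or $st$ when $i^x=s$, using \Cref{obs:s_and_t}) would lie in $\sigma(A_1')$, although this edge is in the cut.
\item It locates $z'$ in $O_p^x$, since the vertices of $B_1'$ coming from $v_4$ lie on $x$--$o^x$ paths that must cross the separation.
\item It refutes $z'\in O_p^x$. If $z\neq v_e$, then $z$ separates $x$ from $t$, so it lies after $o^x$ on every $x$--$t$ path while also reaching $o^x$, contradicting acyclicity. If $z=v_e$, then $z'\in\{s,t\}$, which is disjoint from $O_p^x$.
\end{enumerate}
Without some argument of this kind, (T5) is not established for $v_3v_4$.

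Two smaller points. First, the case where adjacent single vertices of a torso ``form $e$'' never arises. The vertices $s\in\cF_1$ and $t\in\cF_n$ lie in flags of different torsos, and each $\cS_i$ has $\{x_i^+\},\{x_i^-\}$ as adjacent single vertices. So injectivity of $x\mapsto\cT_x$, together with \Cref{rem:cyclic_torsoids} (each torso cleaves exactly one torsoid, and each $C_4$-torsoid is cleaved by exactly one torso), gives the bijection by counting. Second, for (T7) the paper also only appeals to the absence of longer cyclic torsoids (\Cref{lem:decisive,lem-decisive-even}). Your detour through the minimal choice of $i^x$ and $o^x$ is not needed there.
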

    \begin{proof}
        By definition, $\cT_x$ satisfies (T1), (T3) and (T4) in \Cref{def-torsoid}.
        Then \Cref{prop:acyclic_ordering,obs:PX_properties} ensure that there exists neither an edge with tail $x$ and head in $I_p^x \cup \{i^x\}$ nor an edge with head $x$ and tail in $O_p^x \cup \{o^x\}$, which implies that (T6) holds.
        Moreover, (T7) holds, since there are only $C_4$-torsoids in $G$ by~\Cref{lem:decisive,lem-decisive-even}.
        

        Now we show that (T2) holds. It's trivial that $v_1$ and $v_2$ are tight sets.
        \Cref{prop:inoutsep} ensures that $(V(D_\cC^e) \setminus I_p^x,\{i^x\} \cup I_p^x)$ is a good directed $1$-separation in $D_\cC^e$.
        Note that $t \in V(D) \setminus (I_p^x \cup \{i^x\})$.
        Then
        \[
        \phi(V(D_\cC^e) \setminus I_p^x,\{i^x\} \cup I_p^x) = 
        \begin{cases}
        ((V(D_\cC^e) \setminus (I_p^x \cup \{s,t\})) \cup \{v_e\}, \{i^x\} \cup I_p^x) & \text{ if } i^x \neq s,\\
        ((V(D_\cC^e) \setminus (I_p^x \cup \{s,t\})) \cup \{v_e\}, I_p^x \cup \{v_e\}) & \text{ if } i^x = s,
        \end{cases}
        \]
        which is a directed $1$-separation in $D(G,M_\cC^e)$ by~\cref{prop:surmap}.
        Then, by~\cref{prop:cut-side2},
        \[
        \sigma(\{i^x\} \cup I_p^x) =
        \begin{cases}
        
        \bigcup_{x_i \in I_p^x} \{x_i^+,x_i^-\} \cup \{{i^x}^+\} & \text{ if }  i^x \neq s,\\
        \bigcup_{x_i \in I_p^x} \{x_i^+,x_i^-\} \cup \{s\} & \text{ if }  i^x = s,
        \end{cases}
        \]
        which implies $\sigma(\{i^x\} \cup I_p^x) = v_3$.
        Thus, since $\sigma(\{i^x\} \cup I_p^x)$ is tight, $v_3$ is a tight set in $G$. Up to symmetry, $v_4=\sigma(O_p^x \cup \{o^x\})$ is also tight.
%

        Next, we will show that (T5) holds. \Cref{def-partition-ssm} ensures that none of $E(x,I^x)$, $E(x,O^x)$, $E(\{x\}\cup I^x,I_p^x \cup \{i^x\})$ and $E(\{x\}\cup O^x,O_p^x \cup \{o^x\})$ is empty. This implies that none of $E(v_1,v_4\cup \edgefkt{v_1v_4})$, $E(v_4,v_1\cup \edgefkt{v_1v_4})$, $E(v_2,v_3\cup \edgefkt{v_2v_3})$ and $E(v_3,v_2\cup \edgefkt{v_2v_3})$ is empty.
        Then we show that both $E(v_3,v_4\cup \edgefkt{v_3v_4})$ and $E(v_4,v_3\cup \edgefkt{v_3v_4})$ are not empty. Up to symmetry, it suffices to show $E(v_3,v_4\cup \edgefkt{v_3v_4})\neq \emptyset$.
        If $i^x =s$, then $st\in E(v_3,v_4\cup \edgefkt{v_3v_4})$.
        If $i^x \neq s$, then $i^x$ corresponds to an edge between $v_3$ and $\edgefkt{v_3v_4}$.
        Then by \Cref{prop:surmap,tightsettoseparation}, for every $v_iv_j\in E(H)$, there is an edge from $v_i\cup \edgefkt{v_iv_j}$ to $v_j$.

        \begin{claim}
			$\varepsilon(v_2v_3)$ is maximal passable set between $v_2$ and $v_3$, and $\varepsilon(v_1v_4)$ is maximal passable set between $v_1$ and $v_4$.
		\end{claim}
		\begin{claimproof}
			By symmetry, it suffices to show the first case. By \Cref{prop:inoutsep}, $x$ is the out-1-separator of $\{x\}\cup I^x$ and $i^x$ is the in-1-separator of $\{i^x\}\cup I_p^x$ in $D_\cC^e$. By definition of $i^x$, $I^x$ and $I_p^x$, it's easy to check that $i^x$ is also the in-1-separator of $\{i^x\}\cup I^x\cup I_p^x$.
            Then $\bigcup_{x_i\in I^x}\{x_i^+, x_i^-\}=\varepsilon(v_2v_3)$ is a passable set between $v_2$ and $v_3$ by \Cref{tightsettoseparation,prop:surmap}.
            
            Suppose that there exists a larger passable set $\edgefkt{v_2v_3}'$ in $v_2 \cup \edgefkt{v_2v_3} \cup v_3$ between $v_2$ and $v_3$. Then $\edgefkt{v_2v_3}'\cap v_3\neq \emptyset$ and $Y:=v_2\cup \edgefkt{v_2v_3}'$ is a tight set.
            By \Cref{tightsettoseparation}, $Y$ corresponds to a directed 1-separation $(A,B)$ in $D(G,M_\cC^e)$ with $x$ being the out-separator of $A$. Since $x\neq v_e$, there exists precisely one preimage of $(A,B)$ under $\phi$ in $D_\cC^e$, say $(A',B')$. Note that $A$ is the side corresponding to $Y$ as $x$ is the out-separator of $A$, then by the construction of $\phi$, we can get that $x$ is the out-separator of $A'$ and $\sigma(A')=Y$.
            Since $Y\cap v_3\neq \emptyset$, $A'\cap (I_p^x\cup \{i^x\})\neq \emptyset$. By \Cref{near-partition}, $I^x\cap (I_p^x\cup \{i^x\})=\emptyset$, which implies that there exists a directed $v$--$t$ path $P$ for any $v\in A'\cap (I_p^x\cup \{i^x\})$ avoiding $x$ by the definition of $I^x$. Then $P$ contributes an edge from $A'\setminus B'$ to $B'\setminus A'$, a contradiction to \Cref{di-separation}.
            Thus $\varepsilon(v_2v_3)$ is maximal passable set between $v_2$ and $v_3$.
		\end{claimproof}
		
		\begin{claim}
			$\varepsilon(v_3v_4)$ is maximal passable set between $v_3$ and $v_4$.
		\end{claim}
		\begin{claimproof}
            First, we show that there exists a good directed 1-separation $(A,B)$ with $B:=R^x \cup I_p^x \cup \{i^x,o^x\}$ and $A\cap B=\{o^x\}$.
            Let $uv$ be an edge with $v \in B\setminus A$.
            Then $u\notin I^x$, since $x$ is the out-separator of $I^x \cup \{x\}$, and $u\notin \{x\} \cup O^x \cup O_p^x$, since $o^x$ is the out-separator of $\{x\} \cup O^x \cup O_p^x \cup \{o^x\}$ by~\Cref{prop:inoutsep}. Then $u\in B$, which implies that $(A,B)$ is a good directed 1-separation in $D_\cC^e$.
            By symmetry, there exists another good directed 1-separation $(A',B')$ in $D_\cC^e$ with $A':=R^x \cup O_P^x \cup \{i^x,o^x\}$ and $A' \cap B' = \{i^x\}$.
            By \Cref{prop:surmap,tightsettoseparation}, $\sigma(B)$ and $\sigma(A')$ are tight sets. Note that $\sigma(B)=\edgefkt{v_3v_4}\cup v_3$ and $\sigma(A')=\edgefkt{v_3v_4}\cup v_4$ by \Cref{prop:cut-side2}. This implies that $\varepsilon(v_3v_4)$ is a passable set between $v_3$ and $v_4$.
            
            Next, suppose for a contradiction that there exists a larger passable set $\edgefkt{v_3v_4}'$ between $v_3$ and $v_4$ in $v_3 \cup \edgefkt{v_3v_4} \cup v_4$. Then $\edgefkt{v_3v_4}'\cap (v_3\cup v_4)\neq \emptyset$. Assume without loss of generality that $\edgefkt{v_3v_4}'\cap v_4\neq \emptyset$ and let $Y:=\edgefkt{v_3v_4}'\cup v_3$. Then $Y$ corresponds to a directed 1-separation $(A_1,B_1)$ in $D(G,M_\cC^e)$ by \Cref{tightsettoseparation}. Let $z$ be the separator of $(A_1,B_1)$. Let $(A_1',B_1')$ be an arbitrary preimage of $(A_1,B_1)$ under $\phi$ in $D_\cC^e$, and let $z'$ be the separator of $(A_1',B_1')$.
            
            Now, we will show that $\sigma(B_1')=Y$. Suppose for a contradiction that $\sigma(A_1')=Y$.
            Then $i^x \in A_1'$ and $x \in B_1' \setminus A_1'$. 
            Since there is no edge from $A_1'\setminus \{z'\}$ to $B_1'\setminus \{z'\}$,
            every $i^x$--$x$~path has to contain $z'$.
            Thus the definition of $i^x$ implies that $z'=i^x$.
            We show that the unique edge $f \in \rho(A_1',B_1') \cap M_\cC^e$, i.e.\ the edge that corresponds to $z$, has both endpoints in $\sigma(A_1)$, a contradiction.
            
            If $z \neq v_e$, then $z'=i^x\notin \{s,t\}$ by definition of $\phi$.
            Thus ${i^x}^+$ and ${i^x}^-$ are the endpoints of $f$.
            Furthermore, ${i^x}^+\in v_3$ and ${i^x}^-\in \edgefkt{v_3v_4}$, by definition of $v_3$ and $\edgefkt{v_3v_4}$.
            This implies that ${i^x}^+$ and ${i^x}^-$ are contained in $\sigma(A_1')$.
            If $z=v_e$, then $z'=i^x \in \{s,t\}$, which implies that $z'=i^x=s$.
            In particular, $s \in A_1' \cap B_1'$.
            By \cref{obs:s_and_t}, $t\in A_1'$, which implies $t\in \sigma(A_1')$.
            Thus in both cases, the endpoints of $f$ are contained in $\sigma(A_1')$, a contradiction.
            Therefore $\sigma(B_1')=Y$.

            Since $Y\cap v_4\neq \emptyset$ and $\sigma(B_1')=Y$, we have $B_1'\cap O_p^x\neq \emptyset$.
            We show $z'\in B_1'\cap O_p^x$. Suppose not, and let $y\in B_1'\cap O_p^x$ be arbitrary. By definition of $O_p^x$, $y$ is an internal vertex of some directed $x$--$o^x$ path $P$. Then $P$ contributes an edge from $A_1'$ to $B_1'$ that avoids $z'$, a contradiction to \Cref{di-separation}.
            If $z \neq v_e$, then $(A_1',B_1')$ is unique and $z=z'\in B_1'\cap O_p^x$.
            Note that $t \in B_1'$ since $s \in B_1'$.
            Since $z$ is the separator, there is no directed $x$--$t$ path in $D_\cC^e - z$. The definition of $o^x$ ensures that there is also no directed $x$--$t$ path in $D_\cC^e - o^x$, which implies that $z$ lies on every directed $o^x$--$t$ path. However, $z\in O_p^x$, so by \Cref{obs:PX_properties}-\Cref{obs:PX2}, there exists a directed $z$--$o^x$ path. This leads to a contradiction, because $D_\cC^e$ is acyclic.
            If $z=v_e$, then $(A_1',B_1')$ is one of the $((A_1\setminus \{v_e\})\cup \{s,t\}, (B_1\setminus \{v_e\})\cup \{s\})$ and $((A_1\setminus \{v_e\})\cup \{t\}, (B_1\setminus \{v_e\})\cup \{s,t\})$. This shows that $z'\in \{s,t\}$.
            Note that neither $s$ nor $t$ is in $O_p^x$, a contradiction to $z'\in O_p^x$.
            

            
            This shows that $\edgefkt{v_3v_4}'\cap v_4=\emptyset$.
            Thus $\varepsilon(v_3v_4)$ is maximal passable set between $v_3$ and $v_4$.
		\end{claimproof}
        Therefore, $\cT_x=(H,\varepsilon)$ is a torsoid in $G$.

        We now prove the furthermore assertion. Note that any edge $f_i$ in $M_\cC^e$ represents the flag $F_i$ of $\cC$, it means that any torso based on $\cC$ corresponds to some vertex in $D_\cC^e$ except the source and sink.
        Since $G$ is path-like and without nontrivial even $2$-separations, there are only $C_4$-torsoids in $G$ by \Cref{lem:decisive,lem-decisive-even}. In other words, any torsoid $\cT_x$ in $G$ is cleaved by a unique torso $S_x$ in $\mathcal{C}$. While $S_x$ corresponds to vertex $x$ in $D_\cC^e$. Thus the torsoid $\cT_x$ corresponds precisely to the $\cP(x)$ of $D_\cC^e$. Here we are done.
    \end{proof}
    
    \begin{proposition}\label{alternating_path_in_torsoid}
        Let $u\in V(G)\setminus \{s,t\}$, then the following properties hold for $\cT_x$:
        \begin{description}
            \item[If $i^x \neq s$:] A vertex $u$ is contained in $v_3\cup \edgefkt{v_2v_3}$ if and only if $u$ is an internal vertex of some $M_\cC^e$-alternating ${i^x}^-{i^x}^+$--$x^-$~path that avoids $e$ and $x^-x^+$.
            \item[If $i^x = s$:] A vertex $u$ is contained in $v_3\cup \edgefkt{v_2v_3}$ if and only if $u$ is an internal vertex of some $M_\cC^e$-alternating $s$--$x^-$~path that avoids $e$ and $x^-x^+$.
            \item[If $o^x \neq s$:] A vertex $u$ is contained in $v_4\cup \edgefkt{v_1v_4}$ if and only if $u$ is an internal vertex of some $M_\cC^e$-alternating $x^+$--${o^x}^-{o^x}^+$~path that avoids $e$ and $x^-x^+$.
            \item[If $o^x = s$:] A vertex $u$ is contained in $v_4\cup \edgefkt{v_1v_4}$ if and only if $u$ is an internal vertex of some $M_\cC^e$-alternating $x^+$--$t$~path that avoids $e$ and $x^-x^+$.
        \end{description}
    \end{proposition}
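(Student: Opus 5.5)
The plan is to translate the statement into $D_\cC^e$ and then read it off \Cref{obs:PX_properties}. By \Cref{ssm-torsoid}, $v_3\cup\edgefkt{v_2v_3}$ equals $\bigcup_{x_i\in I^x\cup I_p^x}\{x_i^+,x_i^-\}$, together with ${i^x}^+$ if $i^x\neq s$, or with $s$ if $i^x=s$. Symmetrically, $v_4\cup\edgefkt{v_1v_4}$ equals $\bigcup_{x_i\in O^x\cup O_p^x}\{x_i^+,x_i^-\}$ together with ${o^x}^-$ or $t$. (The last two cases in the statement should read $o^x\neq t$ and $o^x=t$; I will treat them that way.)

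\textbf{Step 1: first two cases.} Using \Cref{obs:PX_properties}-\Cref{obs:PX1}, a vertex $x_i$ lies in $I^x\cup I_p^x$ if and only if it is an internal vertex of some directed $i^x$--$x$ path in $D_\cC^e$. Such a path never uses the vertex $v_e$, because $s$ is a source and $t$ is a sink.

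\textbf{Step 2: lifting directed paths to $G$.} I will use the analogue of \Cref{alternating_to_dipath} for partial paths, which follows from \Cref{remark:alternating_path} and the definition of the $M_\cC^e$-direction. A directed $i^x$--$x$ path $Q$ lifts uniquely to an $M_\cC^e$-alternating path in $G$ that avoids $e$.
\begin{itemize}
\item It starts with the matching edge ${i^x}^-{i^x}^+$, or starts at the lower vertex $s$ if $i^x=s$.
\item Each internal vertex $x_i$ of $Q$ is replaced by the traversal of $f_i$ from $x_i^-$ to $x_i^+$.
\item It ends at $x^-$, without using $f_x=x^-x^+$.
\end{itemize}
Conversely, every such alternating path contracts to a directed $i^x$--$x$ path. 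The non-matching edges go from a lower vertex to an upper vertex of a later flag, so the contraction is consistent with the orientation.

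\textbf{Step 3: matching the vertex sets.} For $u\notin\{s,t\}$, $u$ is an internal vertex of the lifted path exactly when one of the following holds:
\begin{itemize}
\item $u\in\{x_i^+,x_i^-\}$ for an internal vertex $x_i$ of $Q$;
\item $u={i^x}^+$, since ${i^x}^-$ is the start vertex and ${i^x}^+$ is internal.
\end{itemize}
This gives exactly the set above, and proves both $i^x$ cases. The $o^x$ cases follow symmetrically from \Cref{obs:PX_properties}-\Cref{obs:PX2}. The lifted path starts at $x^+$ and ends with the edge ${o^x}^-{o^x}^+$, so ${o^x}^-$ is internal and ${o^x}^+$ is the endpoint; if $o^x=t$, the path ends at the upper vertex $t$.

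\textbf{Main obstacle.} The bookkeeping at the endpoints needs care.
\begin{itemize}
\item I must confirm which of ${i^x}^\pm$ is internal. The path starts at ${i^x}^-$ and goes through the matching edge into ${i^x}^+$, which is consistent with $v_3$ containing ${i^x}^+$ and $\edgefkt{v_3v_4}$ containing ${i^x}^-$.
\item I must check that the alternating path automatically avoids $x^-x^+$ and $e$. An alternating path is simple, so any use of $x^-x^+$ could only occur as the final edge, and that edge is excluded.
\item The exclusion $u\notin\{s,t\}$ disposes of $s$ when $i^x=s$, where $s$ is an endpoint rather than an internal vertex. It likewise disposes of $t$ when $o^x=t$.
\end{itemize}
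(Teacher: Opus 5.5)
Your proposal is correct and follows essentially the same route as the paper. Both proofs rewrite $v_3\cup\varepsilon(v_2v_3)$ and $v_4\cup\varepsilon(v_1v_4)$ via \Cref{ssm-torsoid} in terms of $I^x\cup I_p^x\cup\{i^x\}$ and $O^x\cup O_p^x\cup\{o^x\}$, apply \Cref{obs:PX_properties}, and lift directed paths in $D_{\mathcal{C}}^e$ to $M_{\mathcal{C}}^e$-alternating paths. Your explicit endpoint bookkeeping (the lift starts with the matching edge ${i^x}^-{i^x}^+$, or at the lower vertex $s$) is a more transparent justification than the paper's appeal to \Cref{prop:cut-side2} and tightness, and you are right that the last two cases should read $o^x\neq t$ and $o^x=t$.
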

    \begin{proof}
        We first show the cases for $v_3\cup \edgefkt{v_2v_3}$. Let $u\in v_3\cup \edgefkt{v_2v_3}$.
        By \Cref{ssm-torsoid}, $u$ corresponds to a vertex $v_u$ in $D_\cC^e$ such that $v_u\in I^x\cup I_p^x\cup \{i^x\}$. \Cref{obs:PX_properties}-\Cref{obs:PX1} ensures that $v_u$ is in some directed $i^x$--$x$~path, which implies that $u$ is in some $M_\cC^e$-alternating path $P$ that avoids $e$ and $x^-x^+$ by \Cref{alternating_to_dipath} and the definition of $i^x$.
        If $i^x=s$, then since $v_2\cup \edgefkt{v_2v_3}$ is tight, $P$ starts at $s$ and ends at $x^-$.
        Otherwise, $P$ starts at ${i^x}^-$ and ends at $x^-$ by \Cref{prop:cut-side2}. For both cases, $u$ is an internal vertex of $P$.
        Conversely, if $u$ is an internal vertex of some desired $M_\cC^e$-alternating path, then $v_u$ is in some directed $i^x$--$x$~path by \Cref{alternating_to_dipath,prop:cut-side2}. This implies that $v_u\in I^x\cup I_p^x\cup \{i^x\}$ by \Cref{obs:PX_properties}-\Cref{obs:PX1}. Thus $u\in v_3\cup \edgefkt{v_2v_3}$.
        By a similar argument, we can deduce the case for $v_4\cup \edgefkt{v_1v_4}$.
    \end{proof}

    

	\subsection{Interaction of torsoids}
	
	\begin{lemma}\label{interaction-ssm}
		Let $a=x_i$ and $b=x_j$ be two distinct vertices in $D_\cC^e$ with $1\le i<j\le n$. Let $\{\{a\},I^a,I^a_p, \{i^a\}, R^a, \{o^a\}, O^a_p, O^a\}$ and $\{\{b\},I^b,I^b_p, \{i^b\}, R^b,\{o^b\}, O^b_p, O^b\}$ be $\cP(a)$ and $\cP(b)$ of $D_\cC^e$, respectively. Then $\cP(a)$ and $\cP(b)$ interact in the following ways:
		\begin{enumerate}
			\item If $a$ and $b$ are incomparable, then $I^b \cup O^b \cup \{b\} \subseteq R^a$ and $I^a \cup O^a \cup \{a\} \subseteq R^b$.
			Furthermore, both $(I^b_p \cup \{i^b\})\cap (O^a_p \cup \{o^a\})$ and $(I^a_p \cup \{i^a\}) \cap (O^b_p \cup \{o^b\})$ are empty.
			\item If $a$ and $b$ are comparable, then $a \in I^b \cup I^b_p \cup \{i^b\} \cup R^b$ and $b \in O^a \cup O^a_p \cup \{o^a\} \cup R^a$.
			\begin{enumerate}[label=(\alph*)]
        		\item\label{itm:inter_a} If $a \in I^b$, then $I^a \cup O^a_p \cup O^a \subseteq I^b$, $o^a \in I^b \cup \{b\}$, $I^a_p \subseteq I^b \cup I^b_p$, $i^a \in I^b \cup I^b_p \cup \{i^b\}$, $b \in R^a \cup \{o^a\}$ and $R^b \cup O^b_p \cup \{o^b\} \cup O^b \subseteq R^a$.
                \item\label{itm:inter_b} If $a \in I^b_p$, then $I^a \cup I^a_p \subseteq I^b_p$ and $i^a \in I^b_p \cup \{i^b\}$.
                \item\label{itm:inter_c} If $a \in R^b \cup \{i^b\}$, then $I^a \cup I^a_p \cup \{i^a\} \subseteq R^b$.
        		\item\label{itm:inter_d} If $b \in O^a$, then $O^b \cup I^b \cup I^b_p \subseteq O^a$, $i^b \in O^a \cup \{a\}$, $O^b_p \subseteq O^a \cup O^a_p$, $o^b \in O^a \cup O^a_p \cup \{o^a\}$, $a \in R^b \cup \{i^b\}$ and $R^a \cup I^a_p \cup \{i^a\} \cup I^a \subseteq R^b$.
        		\item\label{itm:inter_e} If $b \in O^a_p$, then $O^b \cup O^b_p \subseteq O^a_p$ and $o^b \in O^a_p \cup \{o^a\}$.
        		\item\label{itm:inter_f} If $b \in R^a \cup \{o^a\}$, then $O^b \cup O^b_p \cup \{o^b\} \subseteq R^a$.
            \end{enumerate}
		\end{enumerate}
	\end{lemma}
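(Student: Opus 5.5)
The plan is to translate every class of $\cP(x)$ into a reachability statement in the acyclic digraph $D_\cC^e$ and then compare the two partitions by chasing directed paths. The tools are \Cref{prop:acyclic_ordering}, \Cref{cor:path_from_s_and_to_t}, \Cref{obs:PX_properties} and the $1$-separations of \Cref{prop:inoutsep}. Here "comparable" means that there is a directed $a$--$b$ path. Since $i<j$ and $s,x_1,\dots,x_n,t$ is an acyclic ordering, no directed $b$--$a$ path exists.

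The working dictionary is as follows.
\begin{itemize}
\item The set $I^x\cup I^x_p\cup\{i^x\}\cup\{x\}$ is contained in the set of vertices that reach $x$, and by \Cref{obs:PX_properties} its part other than $i^x$ and $x$ is exactly the set of internal vertices of $i^x$--$x$ paths.
\item Dually, $O^x\cup O^x_p\cup\{o^x\}$ consists of vertices reachable from $x$.
\item The vertices of $I^x$ are exactly those all of whose paths to $t$ pass through $x$; dually for $O^x$ and $s$.
\end{itemize}

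\textbf{Part 1 (incomparable).} Every vertex of $I^b\cup\{b\}$ reaches $b$, so none of them is reached from $a$; otherwise $a$ would reach $b$. Every vertex of $I^b$ reaches $t$ only through $b$, so if it reached $a$, then $a$ would reach $t$ through $b$, which is impossible. Hence $I^b\cup\{b\}$ misses all classes of $\cP(a)$ except $R^a$. The same holds for $O^b$ by duality, and symmetrically with $a$ and $b$ swapped. For the "furthermore" statement, suppose $v\in(I^b_p\cup\{i^b\})\cap(O^a_p\cup\{o^a\})$. Then $a$ reaches $v$, and $v$ reaches $b$ because $v$ lies on an $i^b$--$b$ path. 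Thus $a$ reaches $b$, a contradiction.

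\textbf{Part 2, first claim.} If $a$ reaches $b$, then $a$ is not reachable from $b$, so $a\notin O^b\cup O^b_p\cup\{o^b\}$. Dually, $b\notin I^a\cup I^a_p\cup\{i^a\}$.

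\textbf{Items (a) and (d).} These are dual under reversing all edges and swapping $s$ with $t$, so I will prove (a) and transfer it. Assume $a\in I^b$, so every $a$--$t$ path meets $b$.
\begin{itemize}
\item A vertex of $I^a$ or $O^a$ routes to $t$ only through $a$, or is reached from $a$. In either case its routes to $t$ pass through $a$ or through a descendant of $a$, and hence through $b$. This gives inclusion in $I^b$, where for $O^a$ one uses that its vertices are descendants of $a$, so their paths to $t$ extend $a$-paths.
\item For $O^a_p$: a vertex on an $a$--$o^a$ path is a descendant of $a$. Any path from it to $t$ avoiding $b$ would yield an $a$--$t$ path avoiding $b$, so it lies in $I^b$.
\item $o^a\in I^b\cup\{b\}$ follows in the same way.
\item $b\in R^a\cup\{o^a\}$ holds because $b\notin O^a$ (since $b$ reaches $t$ avoiding $a$), and $b\notin O^a_p$ (since $b$ separates $a$ from $t$, minimality of $o^a$ forces $\operatorname{dist}(a,o^a)\le\operatorname{dist}(a,b)$, and an internal vertex of an $a$--$o^a$ path equal to $b$ would contradict the choice of $o^a$ as the separator closest to $a$ together with acyclicity).
\item For $I^a_p$ and $i^a$: every $s$--$b$ path through $a$ contains $i^a$. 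Then $i^a$ and $I^a_p$ reach $b$, and being ancestors of $a$, they lie in $I^b\cup I^b_p\cup\{i^b\}$. Here I use that $i^b$ lies on every $s$--$a$ path because $a\in I^b$, so $i^b$ is an ancestor of $i^a$ or equal to it.
\item The final inclusion $R^b\cup O^b_p\cup\{o^b\}\cup O^b\subseteq R^a$ follows by excluding each class of $\cP(a)$ using the inclusions already proved and Part 2.
\end{itemize}

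\textbf{Items (b), (c), (e), (f).} These are shorter.
\begin{itemize}
\item For (b): if $a$ is internal on an $i^b$--$b$ path, then every vertex on an $i^a$--$a$ path reaches $b$. Moreover, the $s$--$a$ paths all pass through $i^b$ before $a$, which forces $i^a\in I^b_p\cup\{i^b\}$ and $I^a\cup I^a_p\subseteq I^b_p$. One checks that these vertices are not in $I^b$ because $a\notin I^b$ gives them a route to $t$ avoiding $b$.
\item For (c): vertices reaching $a$ cannot lie in $O^b\cup O^b_p\cup\{o^b\}$. They cannot lie in $I^b\cup I^b_p$ either, since otherwise $a$ would be in $I^b\cup I^b_p$ as well by the path-concatenation argument.
\end{itemize}

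\textbf{Main obstacle.} I expect the hardest part to be the pin-point placement of the single vertices $i^a$, $o^a$ and $b$ in (a) (and dually in (d)). These rely on the "minimizing distance" definition of $i^x$ and $o^x$, and I will need the fact that the vertices separating $s$ from $x$ are totally ordered along every $s$--$x$ path, by acyclicity. I will prove that ordering fact as a preliminary claim first, and then use it throughout.
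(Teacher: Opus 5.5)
Your overall strategy, reading the classes of $\cP(x)$ as (post-)dominator conditions in the acyclic digraph and chasing paths, is the paper's. Part~1, the first claim of Part~2, and items (b) and (c) go through; in (c) you should also exclude $b$ and $i^b$ explicitly, which is easy.

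\textbf{The gap.} It lies in item (a), and through your duality also in (d), at two steps.
\begin{itemize}
\item You justify $b\notin O^a$ by ``$b$ reaches $t$ avoiding $a$''. But $b\in O^a$ is a statement about $s$--$b$ paths avoiding $a$, so this reason is irrelevant.
\item For $v\in O^a$ you concatenate an $a$--$v$ path $P$ with a $v$--$t$ path $Q$ and conclude that $Q$ meets $b$. However, the $a$--$t$ path $PQ$ may meet $b$ on $P$; that is, $v$ may be a descendant of $b$.
\end{itemize}
Neither step follows from acyclicity, the unique source and sink, the two internally disjoint $s$--$t$ paths and the linear order of separators alone. Take the digraph with edges $s\to a\to c\to b\to t$ and $s\to w\to t$. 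All of these properties hold and $a\in I^b$. But $o^a=c$ and $O^a=\{c,b\}$, so $b\in O^a\setminus I^b$. Hence both $O^a\subseteq I^b$ and $b\in R^a\cup\{o^a\}$ fail there. What goes wrong is exactly that $o^a\in O^a$.

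\textbf{The missing ingredient.} You need the near-partition property of \Cref{near-partition}, in the form of part~(2) of \Cref{obs:PX_properties}. It says that every vertex of $O^a\cup O^a_p$ is an \emph{internal vertex of a directed $a$--$o^a$ path}, not merely reachable from $a$ as in your dictionary. The paper combines this with a second fact: since $a\in I^b$ and $o^a$ is the separator of $a$ from $t$ closest to $a$, there is a (possibly trivial) directed $o^a$--$b$ path. Then:
\begin{itemize}
\item Every $a$--$o^a$ path extends to an $a$--$b$ path. So each vertex of $O^a\cup O^a_p$ is internal on an $a$--$b$ path, whose initial segment up to that vertex avoids $b$. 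This gives $O^a\cup O^a_p\subseteq I^b$.
\item If $b\in O^a\cup O^a_p$, then $b$ would be an internal vertex of an $a$--$o^a$ path while $o^a$ reaches $b$, giving a directed cycle. Hence $b\in R^a\cup\{o^a\}$.
\end{itemize}

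The same ingredient, now in the form $i^b\notin I^b$, is what makes your claim that $i^b$ lies on every $s$--$a$ path when $a\in I^b$ true. Without it, $i^b$ could be a descendant of $a$: add the edge $s\to c$ to the example above, and then $i^b=c$.
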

	Note that $a \in R^b \cup \{i^b\}$ only if $s\in R^b$ and $b \in R^a \cup \{o^a\}$ only if $t\in R^a$ in the setting of~\cref{interaction-ssm}.

	\begin{proof}
		First assume that $a$ and $b$ are incomparable in $D_\cC^e$.
		By \Cref{def-partition-ssm}, every directed $s$--$t$~path, that contains an element of $I^b \cup O^b \cup \{b\}$, has to contain $b$.
		Furthermore, for every element $x \in I^a_p \cup O^a_p \cup \{a\}$, there exists a directed $s$--$t$~path that contains $x$ and $a$.
		We can deduce that $I^b \cup O^b \cup \{b\} \subseteq R^a$.
		Note that every vertex in $I^b_p$ reaches $b$ and every vertex in $O^a_p$ can be reached from $a$.
		Thus $I^b_p \cap O^a_p = \emptyset$.
		By symmetry, that is by interchanging the roles of $s$ and $t$ and of $\cP(a)$ and $\cP(b)$, we can deduce $I^a \cup O^a \cup \{a\} \subseteq R^b$.
        Similarly, $I^a_p \cap O^b_p = \emptyset$.
		
		Now we assume that $a$ and $b$ are comparable. Then \Cref{prop:acyclic_ordering} ensures that there exists a directed $a$--$b$~path as $i<j$.
        Thus $a \notin \{o^b\} \cup O^b_p \cup O^b$ and $b\notin \{i^a\}\cup I^a\cup I^a_p$. Up to symmetry, that is up to interchanging the roles of $s$ and $t$ and of $\cP(a)$ and $\cP(b)$, it suffices to prove the first three cases.

        \begin{description}
            \item[\cref{itm:inter_a}]
                If $a \in I^b$, then there is no directed $a$--$t$~path in $D_\cC^e - b$, which implies that there exists a (possibly trivial) directed $o^a$--$b$~path by definition of $o^a$.
                Thus $b \in R^a \cup \{o^a\}$ by \cref{obs:PX_properties}.
                Furthermore, the $o^a$--$b$~path implies that every directed $a$--$o^a$~path extends to some directed $a$--$b$~path. Then by \cref{obs:PX_properties}-\Cref{obs:PX2}, every vertex of $O^a \cup O^a_p$ is an internal vertex of a directed $a$--$b$~path. Thus $O^a \cup O^a_p \subseteq I^b$ and $o^a \in I^b \cup \{b\}$ by definition of $I^b$.
                By definition of $I^a$, every path from some vertex in $I^a$ to $t$ contains $a$, and thus contains also $b$ since $a \in I^b$, which implies $I^a \subseteq I^b$.
                By definition of $I^b$ and $i^b$, every directed $s$--$a$~path has to contain $i^b$. This implies that there exists a (possibly trivial) directed $i^b$--$i^a$~path by definition of $i^a$.
                Thus every directed $i^a$--$a$~path extends to a directed $i^b$--$b$~path.
                Then, by \cref{obs:PX_properties}-\Cref{obs:PX1}, $I^a_p \subseteq I^b \cup I^b_p$ and $i^a \in I^b \cup I^b_p \cup \{i^b\}$.
                Furthermore, \Cref{near-partition} ensures that $R^b \cup O^b_p \cup \{o^b\} \cup O^b \subseteq R^a$.
            \item[\cref{itm:inter_b}]
                If $a \in I^b_p$, then in $D_\cC^e - i^b$, there is a directed $a$--$b$~path but no directed $s$--$a$~path.
                This implies that there exists a (possibly trivial) directed $i^b$--$i^a$~path.
                Thus every directed $i^a$--$a$~path extends to a directed $i^b$--$b$~path.
                By \cref{obs:PX_properties}-\Cref{obs:PX1}, every vertex of $I^a \cup I^a_p$ is an internal vertex of some directed $i^b$--$b$~path, which implies that $I^a \cup I^a_p \subseteq I^b \cup I^b_p$.
                Since $a \in I^b_p$, there is a directed $a$--$t$~path avoiding $b$. Then any path from $I^a \cup I^a_p\cup \{i^a\}$ to $a$ can extend to a path to $t$ avoiding $b$. By the definition of $I^b$, we can deduce that $I^a \cup I^a_p \subseteq I^b_p$ and $i^a \in I^b_p \cup \{i^b\}$.
            \item[\cref{itm:inter_c}]
                If $a \in R^b \cup \{i^b\}$, then $s\in R^b$, since there exists a directed $a$--$b$~path. Extend this path, we can get a directed $s$--$b$~path $P$ that contains $a$.
                Note that $P$ has to contain $i^b$ by the definition of $i^b$. \cref{obs:PX_properties}-\Cref{obs:PX1} ensures that $a$ is an internal vertex of $Pi^b$, which implies that there exists a (possibly trivial) directed $a$--$i^b$~path.
                Then any path from $I^a \cup I^a_p\cup \{i^a\}$ to $a$ can extend to a path to $i^b$. Thus we can deduce that $I^a \cup I^a_p \cup \{i^a\} \subseteq R^b$. \qedhere
        \end{description}
	\end{proof}

    By \Cref{interaction-ssm,ssm-torsoid}, we can deduce the interactions between any two distinct torsoids in $G$.
	
	\begin{corollary}\label{interaction-ssm2}
		Let $\cT=(H, \edgefkt{})$ and $\cT'=(H', \edgefkt{}')$ be torsoids in $G$, where $V(H)=\{v_1,v_2,v_3,v_4\}$ and $V(H')=\{v_1',v_2',v_3',v_4'\}$ as defined in~\Cref{ssm-torsoid}. Let $v_1=\{x^+\}$, $v_2=\{x^-\}$, $v_1'=\{y^+\}$ and $v_2'=\{y^-\}$.
        Then $\cT$ and $\cT'$ interact in the following ways:
		\begin{enumerate}
			\item If there does not exist an $M_\cC^e$-alternating $s$--$t$~path containing the edges $x^+x^-$ and $y^+y^-$ in $G$, then $\edgefkt{v_1v_4} \cup v_1 \cup \edgefkt{v_1v_2} \cup v_2 \cup \edgefkt{v_2v_3} \subseteq \edgefkt{}'(v_3'v_4')$ and $\edgefkt{}'(v_1'v_4') \cup v_1' \cup \edgefkt{}'(v_1'v_2') \cup v_2' \cup \edgefkt{}'(v_2'v_3') \subseteq \edgefkt{v_3v_4}$.
            Furthermore, both $v_3 \cap v_4'$ and $v_4 \cap v_3'$ are empty.
			\item If there exists an $M_\cC^e$-alternating $s$--$t$~path in which the edge $x^+x^-$ precedes the edge $y^+y^-$ in $G$, then $v_1, v_2 \subseteq \edgefkt{}'(v_2'v_3') \cup v_3' \cup \edgefkt{}'(v_3'v_4')$ and $v_1', v_2' \subseteq \edgefkt{v_1v_4} \cup v_4 \cup \edgefkt{v_3v_4}$.
			\begin{enumerate}[label=(\alph*)]
        		\item\label{itm:cor_inter_a} If $v_2 \subseteq \edgefkt{}'(v_2'v_3')$,
                then $\edgefkt{v_1v_4} \cup v_1 \cup \edgefkt{v_1 v_2} \cup v_2 \cup \edgefkt{v_2v_3} \subseteq \edgefkt{}'(v_2'v_3')$, $v_4 \subseteq v_2' \cup \edgefkt{}'(v_2'v_3')$, $v_3 \subseteq \edgefkt{}'(v_2' v_3') \cup v_3'$, $v_1' \subseteq \edgefkt{v_3v_4}$ and $\edgefkt{}'(v_3'v_4') \cup v_4' \cup \edgefkt{}'(v_4'v_1')\subseteq \edgefkt{v_3v_4}$.
                \item\label{itm:cor_inter_b} If $v_2 \subseteq v_3'$, then $v_1\cup \edgefkt{v_2v_3} \cup v_3 \subseteq v_3'$.
                \item\label{itm:cor_inter_c} If $v_2 \subseteq \edgefkt{}'(v_3'v_4')$, then $\edgefkt{v_2v_3} \cup v_3 \subseteq \edgefkt{}'(v_3'v_4')$.
        		\item\label{itm:cor_inter_d} If $v_1' \subseteq \edgefkt{v_1v_4}$,
                then $\edgefkt{}'(v_2'v_3') \cup v_2' \cup \edgefkt{}'(v_1' v_2') \cup v_1' \cup \edgefkt{}'(v_1'v_4') \subseteq \edgefkt{v_1v_4}$, $v_3' \subseteq v_1 \cup \edgefkt{v_1v_4}$, $v_4' \subseteq \edgefkt{v_1 v_4} \cup v_4$, $v_2 \subseteq \edgefkt{}'(v_3'v_4')$ and $\edgefkt{v_3v_4} \cup v_3 \cup \edgefkt{v_3v_2}\subseteq \edgefkt{}'(v_3'v_4')$.
        		\item\label{itm:cor_inter_e} If $v_1' \subseteq v_4$, then $v_2'\cup \edgefkt{}'(v_1'v_4') \cup v_4' \subseteq v_4$.
        		\item\label{itm:cor_inter_f} If $v_1' \subseteq \edgefkt{v_3v_4}$, then $\edgefkt{}'(v_1'v_4') \cup v_4' \subseteq \edgefkt{v_3v_4}$.
            \end{enumerate}
		\end{enumerate}
	\end{corollary}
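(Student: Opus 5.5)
The plan is to transfer \Cref{interaction-ssm} to $G$ through the dictionary supplied by \Cref{ssm-torsoid}. By the one-to-one correspondence there, $\cT=\cT_a$ and $\cT'=\cT_b$ for vertices $a,b$ of $D_\cC^e$ distinct from $s,t$, where $a$ is the vertex obtained by contracting $x^+x^-$ and $b$ the one obtained by contracting $y^+y^-$. We record the translation once:
\begin{itemize}
\item $v_1\cup v_2=\sigma$-image of $\{a\}$;
\item $\varepsilon(v_2v_3)$ is the image of $I^a$;
\item $\varepsilon(v_1v_4)$ is the image of $O^a$;
\item $v_3$ is the image of $I^a_p$ together with ${i^a}^+$ (or $s$);
\item $v_4$ is the image of $O^a_p$ together with ${o^a}^-$ (or $t$);
\item $\varepsilon(v_3v_4)$ is the image of $R^a$, the other endpoints of $i^a,o^a$, and possibly $s,t$.
\end{itemize}
The same holds for $b$ and the primed sets. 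Each vertex $x_k\notin\{s,t,i^a,o^a\}$ has both endpoints of $f_k$ in the same part of $\cT_a$, so a set inclusion among classes of $\cP(a)$ and $\cP(b)$ not involving $i,o$ translates verbatim.

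Next I would match the hypotheses. By \Cref{alternating_to_dipath}, an $M_\cC^e$-alternating $s$--$t$ path containing $x^+x^-$ and $y^+y^-$ corresponds to a directed $s$--$t$ path through $a$ and $b$. Since $s$ reaches everything and everything reaches $t$ (\Cref{cor:path_from_s_and_to_t}), such a path exists if and only if $a,b$ are comparable. The order along the path is the order in $D_\cC^e$, so ``$x^+x^-$ precedes $y^+y^-$'' means there is an $a$--$b$ path, matching $i<j$ in \Cref{prop:acyclic_ordering}. For the case hypotheses:
\begin{itemize}
\item $v_2\subseteq\varepsilon'(v_2'v_3')$ iff $a\in I^b$;
\item $v_2\subseteq v_3'$ iff $a\in I^b_p$, or $a=i^b$ with $x^-$ landing in $v_3'$;
\item $v_2\subseteq \varepsilon'(v_3'v_4')$ corresponds to $a\in R^b\cup\{i^b\}$.
\end{itemize}
Cases (d)--(f) correspond to the $b$-side statements. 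Case 1 then follows from part 1 of \Cref{interaction-ssm}. The sets $I^b\cup O^b\cup\{b\}$, viewed inside $R^a$, translate to $\varepsilon'(v_1'v_4')\cup v_1'\cup v_2'\cup\varepsilon'(v_2'v_3')\subseteq\varepsilon(v_3v_4)$, using $\varepsilon(v_1v_2)=\emptyset$. The emptiness of $(I^b_p\cup\{i^b\})\cap(O^a_p\cup\{o^a\})$ gives $v_3'\cap v_4=\emptyset$.

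The main obstacle is the bookkeeping at the boundary vertices $i^a,o^a,i^b,o^b$ and at $s,t$, since their two endpoints split between a vertex class and $\varepsilon(v_3v_4)$. This is presumably why the corollary has the weaker forms $v_4\subseteq v_2'\cup\varepsilon'(v_2'v_3')$ and $v_3\subseteq \varepsilon'(v_2'v_3')\cup v_3'$. For each such statement I would use \Cref{prop:cut-side2} to place each endpoint by its upper/lower type:
\begin{itemize}
\item ${o^a}^-$ is the only vertex of $v_4$ not coming from $O^a_p$, and $o^a\in I^b\cup\{b\}$;
\item if $o^a=b$, then ${o^a}^-=y^-\in v_2'$, otherwise it lies in $\varepsilon'(v_2'v_3')$;
\item similarly $i^a\in I^b\cup I^b_p\cup\{i^b\}$ puts ${i^a}^+$ into $\varepsilon'(v_2'v_3')\cup v_3'$, since ${i^b}^+\in v_3'$;
\item $b\in R^a\cup\{o^a\}$ puts $y^+$ into $\varepsilon(v_3v_4)$, as ${o^a}^+\in\varepsilon(v_3v_4)$.
\end{itemize}
In (b), when $a=i^b$ (impossible if $a\in I^b_p$), the endpoint $x^+$ goes to $\varepsilon'(v_3'v_4')$, so that subcase is placed in (c). Throughout, I would use the note after \Cref{interaction-ssm} that $a\in R^b\cup\{i^b\}$ forces $s\in R^b$, which keeps the $s,t$ endpoints inside $\varepsilon'(v_3'v_4')$. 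With these checks each item translates directly, and (d)--(f) follow by the $s\leftrightarrow t$ symmetry.
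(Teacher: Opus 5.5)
Your plan is the paper's own argument: the paper obtains this corollary in one line by translating \Cref{interaction-ssm} through the dictionary of \Cref{ssm-torsoid}, and your endpoint bookkeeping is more explicit than anything the paper writes. Two details need fixing. First, for $a=i^b$ the formula in \Cref{ssm-torsoid} puts $x^+={i^b}^+\in v_3'$ and $x^-={i^b}^-\in\varepsilon'(v_3'v_4')$, the reverse of what you wrote; so $v_2\subseteq v_3'$ holds exactly when $a\in I^b_p$, and $a=i^b$ falls under (c) because $v_2=\{x^-\}\subseteq\varepsilon'(v_3'v_4')$. Second, the last inclusion of (a) also needs ${i^b}^-\in\varepsilon(v_3v_4)$, i.e.\ $i^b\in R^a\cup\{i^a\}$; this is not listed in part (a) of \Cref{interaction-ssm}, but it follows from the other inclusions there, since $i^b\notin I^b\cup I^b_p\cup\{b\}$ (symmetrically, you must also place ${o^a}^+$ in (d)).
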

    Note that $v_2 \subseteq \edgefkt{}'(v_3'v_4')$ only if $s \in \edgefkt{v_3'v_4'}$ and $v_1' \subseteq \edgefkt{v_3v_4}$ only if $t \in \edgefkt{v_3v_4}$ in the statement of~\cref{interaction-ssm2}.
    
    \section{Graphs without distinguished edge}\label{sec:torsoidsG}
		In this section, we analyze the structure of torsoids in path-like graphs that do not contain nontrivial even $2$-separation.
    For graphs containing distinguished edges, all torsoids and their interactions can be derived using the source-sink model described in \Cref{sec:sourcesink}.
    Thus, it remains to examine the subclass of graphs without distinguished edges.

	
	Let $G$ be an arbitrary path-like graph without nontrivial even $2$-separation and without distinguished edges, and let $\cC$ be a path-like maximal family of nested tight cuts in $G$.
    Moreover, let $G^+$ be the graph obtained from $G$ by adding an edge $e$ between a lower vertex of $\cF_1$ and an upper vertex of $\cF_n$.
	We note that $e$ is a distinguished edge of $G^+$ and $G^+$ does not contain a nontrivial even $2$-separation.
    Note that every tight set in $G^+$ is also a tight set in $G$ by construction of $G^+$.
    
    Let $M_\cC^e$ the the perfect matching of $G^+$ as defined in~\cref{sec:sourcesink}.
    The proof of~\cref{prop:unique_perfect_matching} shows:
    \begin{proposition}\label{prop:unique_perfect_matching_2}
        Let $G$ be a path-like graph without nontrivial even $2$-separation and without distinguished edge.
		Then $M_{\cC}^e$ is the unique perfect matching of $G^+$ containing $e$. \qed
	\end{proposition}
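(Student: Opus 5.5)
The plan is to rerun the inductive argument from the proof of \Cref{prop:unique_perfect_matching} inside $G^+$. First I check that its ingredients still hold. The family $\cC$ is a path-like maximal family of nested tight cuts in $G$. Adding $e$ between the lower vertex $s\in\cF_1$ and the upper vertex $t\in\cF_n$ does not change any edges incident with the upper vertex of $\cF_1$. It also does not change the edges at the upper vertex of any $\cF_i$ with $1<i<n$, nor at the upper vertex of $\cF_n$ other than $t$. The neighbourhood facts from \Cref{ob_flag,ob_flag_2,ob_path-like} are statements about $G$, and the only new edge is $e$, so they remain valid in $G^+$ for every vertex except $s$ and $t$. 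In particular, define $f_1,\dots,f_n$ as in \Cref{sec:sourcesink}: $f_1$ is the unique edge of $G[\cF_1\setminus\{s\}]$, $f_n$ is the unique edge of $G[\cF_n\setminus\{t\}]$, and $f_i$ is the unique edge of $G[\cF_i]$ otherwise. Then $M_\cC^e=\{f_1,\dots,f_n,e\}$ is a perfect matching of $G^+$, because the flags partition $V(G)$.

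Next, let $M$ be any perfect matching of $G^+$ with $e\in M$. The upper vertex of $\cF_1$ is adjacent only to the two lower vertices of $\cF_1$, one of which is $s$. Since $s$ is already covered by $e$, $M$ must contain $f_1$. Inductively, suppose $f_1,\dots,f_i\in M$ with $i<n$. Consider the upper vertex $u$ of $\cF_{i+1}$, taken different from $t$ when $i+1=n$. By \Cref{ob_flag,ob_flag_2}, its neighbours other than its flag partner lie in $\bigcup_{j\le i}\cF_j$. All of those vertices are already covered by $e,f_1,\dots,f_i$; the vertex $s$ is covered by $e$ and is not adjacent to $u$ anyway unless $u=t$. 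Hence $f_{i+1}\in M$. After $n$ steps we get $M=M_\cC^e$.

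The one point needing care is the last flag. $\cF_n$ has three vertices: $t$, the upper vertex $u$, and the lower vertex $x_n^+$ of $f_n$. Here $u$ is adjacent to the lower vertex of $\cF_n$ and to earlier flags, and we must make sure $u$ cannot be matched to $t$. This holds because $t$ and $u$ are both upper vertices, so they lie in the same colour class $V_2$ and $tu$ is not an edge; bipartiteness of $G^+$ is preserved since $e$ joins $V_1$ and $V_2$. The main obstacle, such as it is, is confirming that adding $e$ changes no neighbourhood used in the induction. It only touches $s$ and $t$, which are already covered by $e$, so the argument of \Cref{prop:unique_perfect_matching} carries over verbatim.
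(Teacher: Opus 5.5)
Your proposal is correct and takes the same approach as the paper, whose entire proof is the remark that the argument of \Cref{prop:unique_perfect_matching} carries over: you rerun that induction in $G^+$ and check that adding $e$ changes only the neighbourhoods of $s$ and $t$, both already covered by $e$. One harmless slip: your aside that $s$ is not adjacent to $u$ is false in general, since with no distinguished edge $s$ must be adjacent to the upper vertex of some intermediate flag. This does not matter, because $s$ is covered by $e$.
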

    Given a tight cut $C$ in $G$, let $C^+$ be the corresponding cut in $G^+$, then either $C^+=C$ or $C^+=C\cup \{e\}$.
    By \Cref{prop:unique_perfect_matching_2}, every perfect matching of $G^+$ except $M_\cC^e$ intersects $C^+$ precisely once.
    Let $\cC^+:=\{C^+: C \in \cC\}$.
    Since $M_{\cC}^e=\{f_i: i \in [n]\} \cup \{e\}$ and by the choice of $\cC$, $M_\cC^e$ intersects every cut of $\cC^+$ precisely once. This implies that every cut in $\cC^+$ is tight in $G^+$.
    Thus $\cC^+$ is a path-like maximal family of nested tight cuts in $G^+$.
	This implies, in particular, that $G^+$ is path-like.
	Then, by~\Cref{sec:sourcesink}, the structure of torsoids in $G^+$ is well-understood.

    \subsection{Torsoids}
	
	Let $\cT = (H, \edgefkt{})$ be an arbitrary torsoid in $G^+$ with $V(H) := \{v_1, v_2, v_3, v_4\}$ as in \Cref{ssm-torsoid}.
	Corresponding to $\cT$ we construct a tuple $\cR_\cT = (I,\edgefkt{})$ as follows:
	Let $I$ be a $C_4$ with vertices $w_1, \cdots, w_4$ and edges $w_1w_2, \cdots, w_4w_1$.
	We choose $w_1:= v_1$, $w_2:= v_2$, $w_3 \subseteq v_3$ and $w_4 \subseteq v_4$.
	More precisely, let
	\begin{itemize}
		\item $\edgefkt{w_1w_2}:= \edgefkt{v_1v_2} = \emptyset$;
		\item $\edgefkt{w_3w_4} := \edgefkt{v_3v_4}$;
		\item $\edgefkt{w_2w_3}$ be $\subseteq$-maximal with $\edgefkt{v_2v_3} \subseteq \edgefkt{w_2w_3} \subseteq \edgefkt{v_2v_3} \cup v_3$ such that either $\edgefkt{w_2w_3} = \edgefkt{v_2v_3}$ or
		\begin{itemize}
			\item $\boundary{w_2 \cup \edgefkt{w_2w_3}}$ contains precisely three edges of $M_\cC^e$,
			\item $s \in \edgefkt{w_2w_3}$, and
			\item any edge in $\boundary{w_2 \cup \edgefkt{w_2w_3}} \setminus \{e\}$ has one endpoint in $w_2 \cup \edgefkt{w_2w_3}$ as an upper vertex of some flag.
		\end{itemize}
		We set $w_3:= v_3 \setminus \edgefkt{w_2w_3}$;
		\item $\edgefkt{w_1w_4}$ be $\subseteq$-maximal with $\edgefkt{v_1v_4} \subseteq \edgefkt{w_1w_4} \subseteq \edgefkt{v_1v_4} \cup v_4$ such that either $\edgefkt{w_1w_4} = \edgefkt{v_1v_4}$ or
		\begin{itemize}
			\item $\boundary{w_1 \cup \edgefkt{w_1w_4}}$ contains precisely three edges of $M_\cC^e$,
			\item $t \in \edgefkt{w_1w_4}$, and
			\item any edge in $\boundary{w_1 \cup \edgefkt{w_1w_4}} \setminus \{e\}$ has one endpoint in $w_1 \cup \edgefkt{w_1w_4}$ as a lower vertex of some flag.
		\end{itemize}
		We set $w_4:= v_4 \setminus \edgefkt{w_1w_4}$.
	\end{itemize}
	
	We will show:
	\begin{theorem}\label{lem:torsoids_in_G}
		The tuple $\cR_\cT = (I,\edgefkt{})$ is a torsoid in $G$.
	\end{theorem}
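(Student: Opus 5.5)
We verify the axioms (T1)--(T7) of \Cref{def-torsoid} for $\cR_\cT$ in $G$, transferring as much as possible from the fact that $\cT$ is a torsoid of $G^+$ (\Cref{ssm-torsoid}). The key tool is the relation between tight sets of $G$ and $G^+$. Every tight set of $G^+$ is tight in $G$. Conversely, an odd set $X$ of $G$ is tight in $G$ exactly when every perfect matching of $G^+$ other than $M_\cC^e$ meets $\boundary{X}$ once, by \Cref{prop:unique_perfect_matching_2}. Moreover, since $M_\cC^e$ and the perfect matchings of $G$ differ by a single alternating cycle through $e$ (\Cref{prop:alternating_s_t_paths}), a set whose cut meets $M_\cC^e$ in three edges is tight in $G$ precisely when every $M_\cC^e$-alternating $s$--$t$~path in $G^+-e$ crosses the cut the right number of times. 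The three-edge condition in the definition of $\edgefkt{w_2w_3}$ and $\edgefkt{w_1w_4}$ is designed exactly for this. Using the orientation rule of \Cref{remark:alternating_path} together with the requirement that all non-$e$ edges of the cut leave through upper (resp.\ lower) vertices, we show that $w_2 \cup \edgefkt{w_2w_3}$ and $w_1 \cup \edgefkt{w_1w_4}$ are tight in $G$. The argument is that an alternating path can only enter such a set along a matching edge and exit once, so every $\Delta(P)$ meets the cut exactly once.

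With this in hand, (T1), (T3) and (T4) are immediate. The vertex set is still a $C_4$ and the sets still near-partition $V(G)$, since we only moved parts of $v_3$ and $v_4$ into edge sets. For (T2), the sets $w_1,w_2$ are singletons. The sets $w_3 = v_3\setminus \edgefkt{w_2w_3}$ and $w_4$ are handled by \Cref{crossing_tight_sets}. Take $w_3$ as an example: $v_3$ is tight, and $w_2\cup\edgefkt{w_2w_3}$ is tight in $G$, and the complement of its intersection pattern with $v_3$ is odd, so their intersection or difference (namely $w_3$) is tight. Parity is checked by counting, since $v_3$ and the removed part differ by an even number of vertices.

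For (T6), non-edges of $I$ are $w_1w_3$ and $w_2w_4$. Here $w_3\subseteq v_3$ and $w_4 \subseteq v_4$, so (T6) follows from (T6) for $\cT$ in $G^+$; the edge $e$ is irrelevant because $e\notin E(G)$. For (T7), we observe that $G$ still has no nontrivial even $2$-separation, so, as in \Cref{ssm-torsoid}, only $C_4$-torsoids occur and the condition is vacuous.

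The main work is (T5). Existence of edges from $w_i\cup\edgefkt{w_iw_j}$ to $w_j$ in $G$ requires care for $w_3w_4$, where in $G^+$ the witnessing edge might have been $e$. We use instead the two internally disjoint alternating paths of \Cref{lem:two_disjoint_paths}, or the fact that $G$ is matching covered, to find an edge avoiding $e$. Passability of $\edgefkt{w_iw_j}$ in $G$ follows from the tightness results above, using the relevant tight sets $w_2\cup\edgefkt{w_2w_3}$, $w_3\cup\edgefkt{w_2w_3}$, and so on.

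The hardest part is maximality. Suppose there is a strictly larger passable set $Y'$ in $G$. If $\boundary{Y'}$ still meets $M_\cC^e$ once, then $Y'$ is tight in $G^+$, which contradicts the maximality in $\cT$ when $Y'$ leaves $\edgefkt{v_iv_j}\cup v_i\cup v_j$ unchanged in the $G^+$ sense. Otherwise $\boundary{Y'}$ meets $M_\cC^e$ three times. In that case we must show that $Y'$ satisfies the three bulleted conditions, which contradicts the $\subseteq$-maximal choice of $\edgefkt{w_2w_3}$ or $\edgefkt{w_1w_4}$; for $w_3w_4$, we show instead that such a $Y'$ cannot exist. For this we plan to use \Cref{alternating_path_in_torsoid}, which characterises $v_3\cup\edgefkt{v_2v_3}$ via alternating paths, together with the interaction rules of \Cref{interaction-ssm2}. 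Proving that a cut meeting $M_\cC^e$ in exactly three edges must contain $s$ (resp.\ $t$) and must have the correct upper/lower orientation is where we expect the real case analysis.
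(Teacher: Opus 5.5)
You follow the paper's skeleton: verify (T1)--(T7), with the tightness in $G$ of $w_2\cup\varepsilon(w_2w_3)$ and $w_1\cup\varepsilon(w_1w_4)$ as the pivot. The easier parts are fine. In particular, your proof of (T2) is a valid alternative to the paper's direct count: apply \Cref{crossing_tight_sets} to $v_3$ and the complement of $w_2\cup\varepsilon(w_2w_3)$, whose intersection $w_3$ is odd.

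\textbf{Gap 1: the characterization lemma.} The maximality part of (T5) is not proved. You flag it as the hard part, but the tools you name (\Cref{alternating_path_in_torsoid}, \Cref{interaction-ssm2}) are not what makes it work. Your dichotomy (``$\partial(Y')$ meets $M_{\mathcal{C}}^e$ once, otherwise three times, and then the bulleted conditions hold'') is exactly the ``only if'' direction of \Cref{lem:comparing_tight_cut_G}. It needs the following argument.
\begin{itemize}
\item Suppose $C$ is tight in $G$ but $C^+$ is not tight in $G^+$. Only $M_{\mathcal{C}}^e$ can violate tightness, so $|M_{\mathcal{C}}^e\cap C^+|\ge 3$ by parity.
\item Take two internally disjoint $M_{\mathcal{C}}^e$-alternating $s$--$t$~paths $P_1,P_2$ (\Cref{lem:two_disjoint_paths}). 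Each $\Delta(P_i)$ is a perfect matching of $G$ containing every edge of $(M_{\mathcal{C}}^e\setminus\{e\})\cap C$ not on $P_i$. Hence each $P_i$ misses at most one such edge.
\item No such edge lies on both paths, so $|(M_{\mathcal{C}}^e\setminus\{e\})\cap C|\le 2$. This forces $M_{\mathcal{C}}^e\cap C^+=\{e,f_j,f_k\}$, so $s$ and $t$ lie on different sides. Without this step you cannot exclude five or more matching edges in the cut.
\item The orientation comes from $G$ being matching covered. The path $P_k$ whose $\Delta(P_k)$ is a perfect matching through $f_k$ crosses $C^+$ only at $f_j$. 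By \Cref{remark:alternating_path}, the $s$-side end of $f_j$ is therefore an upper vertex, and symmetrically for $f_k$.
\item For a non-matching cut edge $f$, the path of a perfect matching through $f$ crosses $C^+$ exactly at $f,f_j,f_k$. It must therefore traverse $f$ from the $t$-side to the $s$-side.
\end{itemize}
Without this you cannot conclude that a competitor for $\varepsilon(w_2w_3)$ satisfies the bullets, which is what contradicts the $\subseteq$-maximal choice.

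\textbf{Gap 2: maximality for $w_3w_4$.} Here your plan only asserts that no competitor exists. Your step ``contradicts maximality in $\mathcal{T}$'' also does not apply when $w_3\subsetneq v_3$ or $w_4\subsetneq v_4$. A competitor $Z$ lives in $w_3\cup\varepsilon(v_3v_4)\cup w_4$, and tightness of $w_3\cup Z$ says nothing directly about $v_3\cup Z$. The paper's argument (\Cref{lem:maximal_passable}) runs as follows.
\begin{itemize}
\item Lift with \Cref{crossing_tight_sets}: the intersections with $v_3$ and $v_4$ are the odd sets $w_3$ and $w_4$. This gives $v_3\cup Z$ and $v_4\cup Z$ tight in $G$.
\item One of them, say $v_4\cup Z$ (which contains $t$), is not tight in $G^+$. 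By the characterization, every cut edge then has its endpoint in $v_4\cup Z$ a lower vertex.
\item Take an alternating $s$--$t$~path through a non-matching edge at $x^+$. It enters the tight set $v_1\cup\varepsilon(v_1v_4)$ via $x^-x^+$.
\item It must leave through an edge $g\in\partial(v_4)\cap\partial(v_4\cup Z)$. By \Cref{remark:alternating_path}, the endpoint of $g$ in $v_4\cup Z$ is an upper vertex, a contradiction.
\end{itemize}
This argument is absent from your proposal.
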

	We note that $\cR_\cT$ and $\cT$ are cleaved by the same torso of $\cC$, since each vertex set $w_i$ of $\cR_\cT$ is contained in the vertex set $v_i$ of $\cT$.
	Given an arbitrary torsoid $\cR$ in $G$, let $\cS$ be some torso of $\cC$ such that $\cS$ cleaves $\cR$.
	Then there exists a torsoid $\cT$ in $G^+$ that is also cleaved by $\cS$, which implies $\cR = \cR_\cT$ by \Cref{rem:cyclic_torsoids}-\Cref{cleave-one}.
	This shows that every torsoid in $G$ is of the form $\cR_\cT$ for some torsoid $\cT$ of $G^+$, which provides the desired structure of torsoids in $G$.
	
	Before we prove~\Cref{lem:torsoids_in_G}, we need the following two lemmata.
	
	\begin{lemma}\label{lem:comparing_tight_cut_G}
        Let $C$ be a cut in $G$.
        Then $C$ is tight in $G$ and $C^+$ is not tight in $G^+$ if and only if $C^+$ meets the following three conditions:
        \begin{enumerate}[label=(\alph*)]
            \item\label{itm:comparing_a} $C^+$ contains precisely three edges of $M_\cC^e$,
			\item\label{itm:comparing_b} $s$ and $t$ are on different sides of $C^+$, and
			\item\label{itm:comparing_c} for every edge $f \in C^+ \setminus \{e\}$, the endpoint of $f$ in the side containing $s$ is an upper vertex of some flag.
        \end{enumerate}
	\end{lemma}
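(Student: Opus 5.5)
The plan is to reduce everything to one counting identity for alternating cycles through $e$. Write $M:=M_\cC^e$ and $k:=|M\cap C^+|$. By \Cref{prop:unique_perfect_matching_2}, the perfect matchings of $G^+$ are exactly $M$ together with the perfect matchings of $G$. For the latter, $N\cap C^+=N\cap C$. Hence, if $C$ is tight in $G$, then $C^+$ fails to be tight in $G^+$ if and only if $k\neq 1$. Conversely, if $C^+$ is not tight, then $k\neq 1$. The sides of $C$ are odd whenever $C$ is tight (or whenever $k$ is odd), so in the relevant situations $k$ is odd. Therefore ``$C$ tight in $G$ and $C^+$ not tight'' amounts to ``$k\ge 3$ odd and $|N\cap C|=1$ for every perfect matching $N$ of $G$''.

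To compute $|N\cap C|$, I use \Cref{prop:alternating_s_t_paths}: every such $N$ is $\Delta(P)=M\triangle(E(P)\cup\{e\})$ for an $M$-alternating $s$--$t$ path $P$ in $G$. Let $m_P$ and $n_P$ be the numbers of $M$-edges and non-$M$-edges of $P$ lying in $C$, and let $\delta:=[e\in C^+]$. Then
\[ |N\cap C| = k-\delta-m_P+n_P. \]
By \Cref{remark:alternating_path}, $P$ traverses each $M$-edge from an upper vertex to a lower vertex. It traverses each non-$M$-edge from a lower vertex to an upper vertex.

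\emph{Backward direction.} Assume \ref{itm:comparing_a}--\ref{itm:comparing_c}, and let $X$ be the side of $C^+$ containing $s$. By \ref{itm:comparing_b}, $t\notin X$ and $e\in C^+$, so $\delta=1$ and $k=3$. By \ref{itm:comparing_c}, every edge of $C$ has its upper endpoint in $X$. Hence every $M$-edge of $P$ in $C$ leaves $X$, and every non-$M$-edge of $P$ in $C$ enters $X$. Since $P$ starts in $X$ and ends outside $X$, its crossings alternate out, in, $\dots$, out, so $m_P=n_P+1$. This gives $|N\cap C|=3-1-(n_P+1)+n_P=1$ for every $N$, so $C$ is tight in $G$. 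Moreover $C^+$ is not tight, because $|M\cap C^+|=3$.

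\emph{Forward direction.} Assume $C$ is tight in $G$ and $C^+$ is not tight, so that $m_P-n_P=k-1-\delta\ge 1$ for every $P$. First, $m_P-n_P\le 1$ for every $P$ should hold because of the monotone flag structure. I expect to show this as follows. Take the first and last crossings of $P$; the portion of $P$ between two consecutive $M$-crossings that both leave the same side must return via a non-$M$ crossing, which forces alternation. I would make this precise by comparing $P$ with a second path obtained from \Cref{lem:two_disjoint_paths} or \Cref{cor:path_from_s_and_to_t}, rerouted so that it avoids a chosen $M$-edge of $C$. Granting the bound, $k-1-\delta\le 1$ together with $k\ge 3$ forces $k=3$ and $\delta=1$, i.e.\ \ref{itm:comparing_a} and \ref{itm:comparing_b}. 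Then $m_P=n_P+1$ for all $P$. Condition \ref{itm:comparing_c} is then derived by contradiction. Suppose some edge $f\in C$ has a lower endpoint in the $s$-side. Choose $P$ through $f$; this is possible since $G$ is matching covered, via \Cref{prop:alternating_s_t_paths} when $f\notin M$, and via \Cref{cor:path_from_s_and_to_t} for $M$-edges. Such a $P$ has a crossing in the ``wrong'' direction, which should produce $m_P\neq n_P+1$.

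The main obstacle is this forward direction, namely controlling $m_P-n_P$ for every path rather than for a single one. If the direct path-rerouting argument becomes messy, I would instead apply \Cref{crossing_tight_sets} in $G$. Its input would be the side of $C$ together with the tight sets $\sigma(\cdot)$ coming from the 1-separations of $D_\cC^e$ through the relevant $x_i$. The aim is to show that the three $M$-edges of $C^+$ and the orientation of the cut edges are forced.
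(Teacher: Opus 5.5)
Your backward direction is correct and is the same alternation argument the paper uses. The forward direction has a genuine gap at exactly the step you mark ``granting the bound''.

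The inequality $m_P-n_P\le 1$ does not follow from the monotone flag structure. The upper/lower orientation says nothing about which side of an arbitrary cut a vertex lies on, so a path can perfectly well return to the $s$-side through an $M$-edge. Here $m_P-n_P$ equals the constant $k-1-\delta$ only because $C$ is tight, so bounding it is exactly the content of (a) and (b). Your ``rerouting'' idea does not supply this bound.

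The missing step is a counting argument with two internally disjoint $M$-alternating $s$--$t$ paths $P_1,P_2$ in $G$ from \Cref{lem:two_disjoint_paths}. Since these paths avoid $e$, every $M$-edge on them has both ends interior. Hence $P_1,P_2$ share no $M$-edge, and $m_{P_1}+m_{P_2}\le k-\delta$. On the other hand, $m_{P_i}\ge m_{P_i}-n_{P_i}=k-1-\delta$. So $2(k-1-\delta)\le k-\delta$, i.e.\ $k-\delta\le 2$, which together with $k\ge 3$ forces $k=3$ and $\delta=1$.

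Your derivation of (c) is also incomplete. A single crossing in the wrong direction is compatible with $m_P=n_P+1$. For example, crossing out through a non-$M$ edge $f$, back in through $f_j$, and out through $f_k$ gives $m_P=2=n_P+1$. Likewise, a path \emph{through} an $M$-edge whose lower end is on the $s$-side need not yield a contradiction.

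You must first orient the two $M$-edges $f_j,f_k$ of $C$, and this needs paths that \emph{avoid} them. Let $N$ be a perfect matching of $G$ containing $f_k$, and let $\Delta(P)=N$. Then $N\cap C=\{f_k\}$ forces $P$ to meet $C$ only in $f_j$. This single crossing leaves the $s$-side, so by \Cref{remark:alternating_path} the upper end of $f_j$ lies on the $s$-side; symmetrically for $f_k$.

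Only then does a path through a non-$M$ edge $f\in C$ work. Such a path meets $C$ exactly in $\{f,f_j,f_k\}$, and both $M$-edges are outward crossings. So $f$ is the middle, inward crossing, and its upper endpoint is on the $s$-side. This is how the paper argues.
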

\begin{proof}
		Let $C$ be a tight cut in $G$ such that $C^+$ is not tight in $G^+$.
		Since $M_\cC^e$ is the unique perfect matching in $G^+$ that is not a perfect matching in $G$, there are at least two edges of $M_\cC^e$ in $C^+$.
		Since $C$ is tight in $G$, both sides of $C^+$ are odd and thus $M_\cC^e$ intersects $C^+$ oddly.
		This implies $|M_\cC^e \cap C^+| \geq 3$.
		Let $P_1$ and $P_2$ be internally disjoint $M_\cC^e$-alternating $s$--$t$~paths in $G$, which exist by~\Cref{lem:two_disjoint_paths}.
        For each $i\in[2]$, note that $\Delta(P_i)=(E(P_i) \setminus (M_\cC^e\setminus \{e\})) \cup ((M_\cC^e\setminus \{e\}) \setminus E(P_i))$ is a perfect matching of $G$ by \cref{prop:alternating_s_t_paths}. Then $|\Delta(P_i)\cap C|=1$ as $C$ is tight in $G$, which implies that $|\Delta(P_i)\cap ((M_\cC^e \setminus \{e\}) \cap C)| = | ((M_\cC^e \setminus \{e\}) \setminus E(P_i)) \cap C | \le 1$. Thus $|M_\cC^e \cap C| \leq 2$, which implies $|M_\cC^e \cap C^+| = 3$ and $e\in C^+$, i.e.\ $s$ and $t$ are on different sides of $C^+$. This shows that $C^+$ meets both conditions \cref{itm:comparing_a} and \cref{itm:comparing_b}.
        Let $j,k \in \N$ such that $M_\cC^e \cap C^+=\{e,f_k,f_j\}$.

        Let $M$ be an arbitrary perfect matching of $G$, and let $P$ be the $M_\cC^e$-alternating $s$--$t$~path with $\Delta(P)=M$, which exists by~\Cref{prop:alternating_s_t_paths}.
        Since $C$ is tight in $G$, $M$ intersects $C$ in precisely one edge $g$.
        Then either $g \in \{f_k,f_j\}$ or $g \in E(G) \setminus M_\cC^e$.
        In the former case, $P$ intersects $C$ precisely in the edge of $\{f_k,f_j\} \setminus \{g\}$.
        In the latter case, $P$ intersects $C$ precisely in the edges $f_k, f_j$ and $g$.
        Since $G$ is matching covered, there exists a perfect matching $M_k$ in $G$ such that $f_k\in M_k$.
        Let $P_k$ be an $M_\cC^e$-alternating $s$--$t$~path such that $\Delta(P_k)=M_k$.
        This implies that $P_k$ intersects $C^+$ precisely in $f_j$. \Cref{remark:alternating_path} ensures that the endpoint of $f_j$ that lies on the same side as $s$ is an upper vertex of $\cF_j$.
        Similarly, we can deduce that the endpoint of $f_k$ that lies on the same side as $s$ is an upper vertex of $\cF_k$.
        Meanwhile, for every edge $f\in C^+ \setminus M_\cC^e$, there also exists a perfect matching $N$ in $G$ such that $f\in N$. Let $P_N$ be the $M_\cC^e$-alternating $s$--$t$~path in $G$ such that $\Delta(P_N)=N$. Then $P_N$ intersects $C^+$ in the three edges $f,f_k,f_j$. Since both endpoints of $f_k$ and $f_j$ that lie on the same side as $s$ are upper vertices,
        $P_N$ traverses $f_k$ and $f_j$ from the side of $s$ to the side of $t$ by \Cref{remark:alternating_path}.
        Thus $P_N$ traverses $f$ from the side of $t$ to the side of $s$.
        Then \Cref{remark:alternating_path} implies that the endpoint of $f$ that lies on the side of $s$ is also an upper vertex of some flag.
        This shows that $C^+$ meets condition \cref{itm:comparing_c}.

		Conversely, assume that $C^+$ meets the conditions \cref{itm:comparing_a}, \cref{itm:comparing_b}, and \cref{itm:comparing_c}. Then $M_\cC^e$ witnesses that $C^+$ is not tight in $G^+$ and that both sides of $C^+$ are odd. Note that $C^+= C\cup \{e\}$.
        Let $M$ be an arbitrary perfect matching in $G$. Then there exists an $M_\cC^e$-alternating $s$--$t$~path $P'$ in $G$ with $\Delta(P')=M$ by \Cref{prop:alternating_s_t_paths}.
        By condition \cref{itm:comparing_c} and \Cref{remark:alternating_path}, $P'$ intersects $C^+$ in edges of $M_\cC^e$ if and only if it moves from the side of $s$ to the side of $t$. Then the first intersection of $P'$ with $C^+$ is an edge of $M_\cC^e$.
        Furthermore, if $P'$ intersects $C^+$ more than once, then the second intersection is not an edge of $M_\cC^e$ but the third.
        Since $|C^+\cap M_\cC^e|=3$ and $e\in C^+$, either $P'$ intersects $C$ precisely in one edge in $M_\cC^e\setminus \{e\}$, or $P'$ intersects $C$ in two edges in $C\cap M_\cC^e$ and one edge in $E(P')\setminus M_\cC^e$.
        Then the corresponding perfect matching $\Delta(P')=(E(P') \setminus (M_\cC^e\setminus \{e\})) \cup ((M_\cC^e\setminus \{e\}) \setminus E(P'))$ intersects $C$ precisely once.
        By \Cref{prop:alternating_s_t_paths}, the cut $C$ intersects any perfect matching of $G$ precisely once, which means $C$ is a tight cut in $G$. This completes the proof.
	\end{proof}
    By~\cref{lem:comparing_tight_cut_G}, $w_2 \cup \edgefkt{w_2w_3}$ and $w_1 \cup \edgefkt{w_1w_4}$ are tight sets in $G$.
	
	\begin{corollary}\label{T2forG}
	    $w_3$ and $w_4$ are tight sets in $G$.
	\end{corollary}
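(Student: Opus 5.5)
We want $w_3$ and $w_4$ to be tight in $G$; by symmetry (swapping the roles of $s$ and $t$, upper and lower vertices, and $v_2,v_3$ with $v_1,v_4$) it suffices to treat $w_3$. If $\edgefkt{w_2w_3}=\edgefkt{v_2v_3}$, then $w_3=v_3$. This set is tight in $G^+$ by \Cref{ssm-torsoid}, and every tight set of $G^+$ is tight in $G$, so we are done. So assume the second alternative: $Y:=w_2\cup\edgefkt{w_2w_3}$ satisfies the three bulleted conditions. By \Cref{lem:comparing_tight_cut_G}, $Y$ is then tight in $G$, as remarked just before the corollary.

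The plan is to obtain $w_3$ from two tight sets of $G$ via \Cref{crossing_tight_sets}. Take $Z:=v_2\cup\edgefkt{v_2v_3}\cup v_3$. It is the complement of $v_1\cup\edgefkt{v_1v_4}\cup v_4\cup\edgefkt{v_3v_4}$, which is the side $\sigma(\{x\}\cup O^x\cup O_p^x\cup\{o^x\})$ of a good $1$-separation from (P2) of \Cref{prop:inoutsep}. Hence $Z$ is tight in $G^+$ and therefore tight in $G$. We have $Y\subseteq Z$ and $w_3=Z\setminus Y$. Now apply \Cref{crossing_tight_sets} to $Y$ and $\overline{Z}:=V(G)\setminus Z$, which is also tight. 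Since $|V(G)|$ is even and $Y,Z$ are odd, the set $V(G)\setminus(Y\cup\overline Z)=w_3$ has odd size. The lemma, applied to the complements $\overline{Y}$ and $Z$ with $\overline Y\cap Z=w_3$ odd, gives that $\overline{Y}\cap Z=w_3$ is tight. This requires only that $\overline Y$ and $Z$ are tight, and both are, since complements of tight sets are tight.

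So the only real checks are the following. First, $Z$ is tight: this follows directly from \Cref{prop:inoutsep} together with \Cref{prop:surmap} and \Cref{tightsettoseparation}. Second, $|w_3|$ is odd, or more precisely $|\overline Y\cap Z|$ is odd, which is the parity computation above. Third, $w_3\neq\emptyset$, so that the statement is meaningful. Nonemptiness follows from $\subseteq$-maximality combined with the requirement that $Y$ be a proper tight set with odd complement; alternatively, it follows because $w_3$ is odd.

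I expect the main obstacle to be confirming that $Z$ really is tight in $G$ (rather than only in $G^+$) and has the right parity. Tightness transfers because every tight set of $G^+$ is tight in $G$. The parity of $Y$ holds because $Y$ induces a cut containing exactly three edges of $M_\cC^e$, so $|Y|$ is odd. The parity of $Z$ holds because it is a side of a tight cut. With these in hand, the corollary follows from \Cref{crossing_tight_sets}, and $w_4$ is handled symmetrically using $w_1\cup\edgefkt{w_1w_4}$ and $v_1\cup\edgefkt{v_1v_4}\cup v_4$.
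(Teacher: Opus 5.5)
Your overall strategy is the paper's idea: write $w_3$ as an odd intersection of two tight sets of $G$, one being $\overline{Y}$ with $Y=w_2\cup\varepsilon(w_2w_3)$ (tight by \Cref{lem:comparing_tight_cut_G}), and then uncross using \Cref{crossing_tight_sets}. However, the second set you chose is wrong, so the step ``$Z$ is tight'' fails. The set $Z=v_2\cup\varepsilon(v_2v_3)\cup v_3$ has \emph{even} size. Indeed $|v_2|=1$, $|\varepsilon(v_2v_3)|$ is even because $v_2\cup\varepsilon(v_2v_3)$ is tight, and $|v_3|$ is odd. So $Z$ cannot be tight. Your identification of $\overline{Z}=v_1\cup\varepsilon(v_1v_4)\cup v_4\cup\varepsilon(v_3v_4)$ with $\sigma(\{x\}\cup O^x\cup O^x_p\cup\{o^x\})$ is also incorrect. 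The vertex $x$ lies in that side and is not the separator, so both endpoints $x^+,x^-$ lie in $\sigma$ of that side. By \Cref{prop:cut-side2} that side is $v_1\cup v_2\cup\varepsilon(v_1v_4)\cup v_4$. Your parity paragraph is inconsistent for the same reason: with $Y\subseteq Z$ and both odd, $w_3=Z\setminus Y$ would be even, not odd. The same defect occurs in your treatment of $w_4$, since $v_1\cup\varepsilon(v_1v_4)\cup v_4$ is again even.

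The repair is short and gives exactly the paper's argument. Uncross $\overline{Y}$ with $v_3$ itself, which is tight in $G^+$ by (T2) in \Cref{ssm-torsoid} and hence tight in $G$. You could equally use $v_3\cup\varepsilon(v_2v_3)$, which is tight by passability. Since $v_2\cap v_3=\emptyset$ and $\varepsilon(v_2v_3)\subseteq\varepsilon(w_2w_3)\subseteq\varepsilon(v_2v_3)\cup v_3$, you get $w_3=v_3\cap\overline{Y}$. Moreover $|w_3|=|v_3|-|\varepsilon(w_2w_3)\setminus\varepsilon(v_2v_3)|$ is odd, because both $\varepsilon(w_2w_3)$ and $\varepsilon(v_2v_3)$ are even: $Y$ and $v_2\cup\varepsilon(v_2v_3)$ are odd and $|v_2|=1$. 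Then \Cref{crossing_tight_sets} shows that $w_3$ is tight. The paper phrases this as $\partial(w_3)\subseteq\partial(v_2\cup\varepsilon(w_2w_3))\cup\partial(v_3)$ together with a perfect-matching count. For $w_4$, use $v_4$ and $w_1\cup\varepsilon(w_1w_4)$ in the same way.
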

    \begin{proof}
        It's trivial for $w_3=v_3$ and $w_4=v_4$ by \Cref{ssm-torsoid}, as $v_3$ and $v_4$ are tight sets in $G^+$, then also in $G$.
        Thus we assume $w_3\subset v_3$ and $w_4\subset v_4$.
        By \Cref{lem:comparing_tight_cut_G} and construction of $\cR_\cT$, $v_2\cup \edgefkt{w_2w_3}$ is a tight set in $G$.
        Since $w_3=v_3\setminus \edgefkt{w_2w_3}$, $w_3$ is an odd set.
        Moreover, any edge in $\partial(w_3)$ is either in $\partial(v_2\cup \edgefkt{w_2w_3})$ or in $\partial(v_3)$.
        Then any perfect matching in $G$ intersects $\partial(w_3)$ precisely once, which implies that $w_3$ is tight.
        By a similar argument, we can deduce that $w_4$ is also tight in $G$.
    \end{proof}
    
	\begin{lemma}\label{lem:maximal_passable}
        $\edgefkt{v_3v_4}$ is the maximal subset of $w_3 \cup \edgefkt{v_3v_4} \cup w_4$ that is passable between $w_3$ and $w_4$ in $G$.
	\end{lemma}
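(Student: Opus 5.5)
We have to show two things: that $\edgefkt{v_3v_4}$ is passable between $w_3$ and $w_4$ in $G$, and that no strictly larger subset of $w_3\cup\edgefkt{v_3v_4}\cup w_4$ is. Passability means that $w_3\cup\edgefkt{v_3v_4}$ and $w_4\cup\edgefkt{v_3v_4}$ are tight in $G$.

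\textbf{Passability.} By \Cref{ssm-torsoid}, $v_3\cup\edgefkt{v_3v_4}$ and $v_4\cup\edgefkt{v_3v_4}$ are tight in $G^+$, hence tight in $G$. We then remove $\edgefkt{w_2w_3}$ from the first set, exactly as in the proof of \Cref{T2forG}. The set $w_3\cup\edgefkt{v_3v_4}$ is obtained from the tight set $v_3\cup\edgefkt{v_3v_4}$ by deleting $\edgefkt{w_2w_3}\setminus\edgefkt{v_2v_3}$. Equivalently, its complement is the union of the tight set $w_2\cup\edgefkt{w_2w_3}$ (tight by \Cref{lem:comparing_tight_cut_G}) with $v_1\cup\edgefkt{v_1v_4}\cup v_4$. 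The latter is the complement of $v_3\cup\edgefkt{v_3v_4}\cup v_2\cup\edgefkt{v_2v_3}$; we read that set off as a side of one of the 1-separations in \Cref{prop:inoutsep}, so it is tight. These two tight sets are disjoint and have union of odd size. Hence $w_3\cup \edgefkt{v_3v_4}$ is odd, every edge of its cut lies in one of two tight cuts, and the argument of \Cref{T2forG} (each perfect matching meets each of the two cuts once, and the two meetings coincide by parity) shows it is tight. The same argument with $\edgefkt{w_1w_4}$ handles $w_4\cup\edgefkt{v_3v_4}$. This part should be routine.

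\textbf{Maximality.} Suppose $\edgefkt{}'\supsetneq\edgefkt{v_3v_4}$ is passable in $G$. Without loss of generality $\edgefkt{}'\cap w_4\neq\emptyset$, so $Y:=w_3\cup\edgefkt{}'$ is tight in $G$ and meets $w_4$. We distinguish two cases.
\begin{itemize}
\item If $\boundary{Y}^+$ is tight in $G^+$, then $Y$ (or $Y$ together with $\edgefkt{w_2w_3}\setminus\edgefkt{v_2v_3}$, which is tight by crossing with $w_2\cup\edgefkt{w_2w_3}$ via \Cref{crossing_tight_sets}) gives a passable set for $v_3,v_4$ in $G^+$ strictly larger than $\edgefkt{v_3v_4}$. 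This contradicts (T5) for $\cT$ in $G^+$ (\Cref{ssm-torsoid}).
\item Otherwise, \Cref{lem:comparing_tight_cut_G} applies to $\boundary{Y}^+$: it contains exactly three edges of $M_\cC^e$, $e$ crosses it, and all non-$e$ edges have their endpoint on the $s$-side an upper vertex. We then derive a contradiction with the $\subseteq$-maximality in the definition of $\edgefkt{w_1w_4}$ (or of $\edgefkt{w_2w_3}$). The complement $V(G)\setminus Y$ is tight, contains $w_1\cup \edgefkt{w_1w_4}$ and is contained in $v_1\cup\edgefkt{v_1v_4}\cup v_4\cup v_2\cup\edgefkt{v_2v_3}$. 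Intersecting or uniting it with $w_1\cup\edgefkt{w_1w_4}$ via \Cref{crossing_tight_sets} should produce a set $\edgefkt{v_1v_4}\subseteq X\subseteq \edgefkt{v_1v_4}\cup v_4$ strictly larger than $\edgefkt{w_1w_4}$ satisfying the three bullet conditions. The conditions are checked using \Cref{prop:cut-side2} for the orientation of edges.
\end{itemize}

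The main obstacle is the second case. We must control which three $M_\cC^e$-edges lie in $\boundary{Y}^+$ and on which side $s$ and $t$ sit, so that the resulting set lands precisely in the allowed range for $\edgefkt{w_1w_4}$ rather than mixing with $v_2$. For this I would use \Cref{alternating_path_in_torsoid} together with the two internally disjoint alternating paths of \Cref{lem:two_disjoint_paths} to locate the edges $x^+x^-$, $f_{o^x}$ and $e$ as the three matching edges.
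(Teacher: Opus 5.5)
\textbf{The maximality part has a genuine gap.}

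In your first case, tightness of $Y=w_3\cup\varepsilon'$ in $G^+$ does give that $v_3\cup\varepsilon'$ is tight in $G^+$. The correct route is crossing with $v_3$ along the odd set $w_3$. Crossing with $w_2\cup\varepsilon(w_2w_3)$ is not available: that set is disjoint from $Y$, and it is not tight in $G^+$ when $w_3\neq v_3$. But contradicting (T5) for $\mathcal{T}$ also needs $v_4\cup\varepsilon'$ to be tight in $G^+$, and your case split says nothing about $w_4\cup\varepsilon'$.

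Your second case is not carried out, and the mechanism you suggest cannot work. Since $w_1\cup\varepsilon(w_1w_4)\subseteq V(G)\setminus Y$, applying \Cref{crossing_tight_sets} to these two sets just returns the sets themselves. Moreover $V(G)\setminus Y$ always contains $x^-$. So no set $w_1\cup X$ with $\varepsilon(w_1w_4)\subsetneq X\subseteq\varepsilon(v_1v_4)\cup v_4$ arises. The maximality of $\varepsilon(w_1w_4)$ or $\varepsilon(w_2w_3)$ is not the right tool here.

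\textbf{The missing idea is a direct orientation argument.}
\begin{enumerate}
\item Cross $w_4\cup\varepsilon'$ with $v_4$ and $w_3\cup\varepsilon'$ with $v_3$. The intersections are $w_4$ and $w_3$, both odd, so $v_4\cup\varepsilon'$ and $v_3\cup\varepsilon'$ are tight in $G$.
\item Note that $t\in v_4\cup\varepsilon(v_3v_4)\subseteq v_4\cup\varepsilon'$. If $v_4\cup\varepsilon'$ were not tight in $G^+$, \Cref{lem:comparing_tight_cut_G} would place $s$ outside it. It would also force every edge of $\partial(v_4\cup\varepsilon')$ to have a lower vertex as its inner endpoint.
\item But some edge joins $v_1\cup\varepsilon(v_1v_4)$ to $v_4$; the paper exhibits one on an alternating path through $x^+x^-$. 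This edge lies in $\partial(v_4\cup\varepsilon')$. Its endpoint in $v_4$ is an upper vertex, by \Cref{prop:cut-side2} applied to $v_1\cup\varepsilon(v_1v_4)=\sigma(\{x\}\cup O^x)$. This is a contradiction.
\item Symmetrically, $v_3\cup\varepsilon'$, which contains $s$, is tight in $G^+$. Here use an edge from $v_2\cup\varepsilon(v_2v_3)$ into $v_3$, whose endpoint in $v_3$ is a lower vertex.
\end{enumerate}
Both sets are then tight in $G^+$, which contradicts the maximality of $\varepsilon(v_3v_4)$ for $\mathcal{T}$ in $G^+$.

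\textbf{The passability part also has a false step, though it is easy to repair.} The set $v_1\cup\varepsilon(v_1v_4)\cup v_4$ is not tight, because it has even size. Indeed, two disjoint tight sets can never have odd union, so your sentence contradicts itself. The side actually supplied by \Cref{prop:inoutsep} (the separation with separator $o^x$) is $v_1\cup v_2\cup\varepsilon(v_1v_4)\cup v_4$, which contains $x^-$. This set meets $w_2\cup\varepsilon(w_2w_3)$ exactly in $\{x^-\}$. So \Cref{crossing_tight_sets} makes their union tight, and that union is the complement of $w_3\cup\varepsilon(v_3v_4)$. Alternatively, essentially as in the paper: the cut of this complement lies in $\partial(v_4)\cup\partial(w_2\cup\varepsilon(w_2w_3))$, and a parity argument finishes.
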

	\begin{proof}
        For $w_3=v_3$ and $w_4=v_4$, $\edgefkt{v_3v_4}$ is passable between $w_3$ and $w_4$ in $G$ by \Cref{ssm-torsoid}.
        Without loss of generality we assume $w_3\subset v_3$. By the choice of $\edgefkt{w_2w_3}$, $v_2 \cup \edgefkt{w_2w_3}$ is a tight set in $G$.
        We first show that $Y:=v_4 \cup \edgefkt{v_1v_4} \cup v_1 \cup \edgefkt{v_1 v_2} \cup v_2 \cup \edgefkt{w_2w_3}$ is tight in $G$.
        If $t\in Y$, then any edge in $\partial(Y)$ is either in $\partial(v_2 \cup \edgefkt{w_2w_3})$ or in $\partial(v_4)$. Thus any perfect matching in $G$ intersects $\partial(Y)$ precisely once, which implies that $Y$ is tight.
        Otherwise, $\partial(Y)$ meets the condition in \Cref{lem:comparing_tight_cut_G}, which means that $Y$ is a tight set.
        Therefore, the complement of $Y$, i.e.\ $w_3 \cup \edgefkt{v_3v_4}$, is also tight in $G$.
        Similarly, when $w_4\subset v_4$, $w_4 \cup \edgefkt{v_3v_4}$ is tight in $G$. 
        This implies that $\edgefkt{v_3v_4}$ is passable between $w_3$ and $w_4$ whenever $w_3\subset v_3$ or $w_4\subset v_4$.
        
        
		Suppose that there exists a larger passable set $Z$ between $w_3$ and $w_4$ in $w_3 \cup \edgefkt{v_3v_4} \cup w_4$, then $Z \cap (w_3\cup w_4) \neq \emptyset$, and $w_4 \cup Z$ and $w_3 \cup Z$ are both tight in $G$.
        Since $(w_4 \cup Z) \cap v_4 = w_4$ and $(w_3 \cup Z) \cap v_3 = w_3$ are odd, by \Cref{crossing_tight_sets}, $v_4 \cup Z = (w_4 \cup Z) \cup v_4$ and $v_3 \cup Z = (w_3 \cup Z) \cup v_3$ are also tight in $G$.
        As $\edgefkt{v_3v_4}$ is maximal passable between $v_3$ and $v_4$ in $G^+$, one of $v_3 \cup Z$ and $v_4 \cup Z$ is not tight in $G^+$. Without loss of generality, we assume that $v_4 \cup Z$ is not tight in $G^+$. 
        Then precisely one of $s$ and $t$ is in $v_4 \cup Z$ by \Cref{lem:comparing_tight_cut_G}. 
        Furthermore, since $t \in v_4 \cup \edgefkt{v_3v_4} \subseteq v_4 \cup Z$, for every edge $f \in \boundary{v_4 \cup Z}$, the endpoint of $f$ in $v_4 \cup Z$ is a lower vertex of some flag.
        We will show that there exists an edge $g$ in $\boundary{v_4 \cup Z}$ contradicting this property.
        
		Let $x^+, x^- \in V(G)$ such that $v_1 = \{x^+\}$ and $v_2 = \{x^-\}$, and let $f\in E(G) \setminus M_\cC^e$ be an edge incident with $x^+$.
        Note that $x^+x^-\in M_\cC^e\setminus\{e\}$.
        Since $G$ is matching covered and by~\cref{prop:alternating_s_t_paths}, there exists an $M_\cC^e$-alternating $s$--$t$~path $P$ that contains $f$. Then $P$ also contains $x^+x^-$.
        Note that $v_1 \cup \edgefkt{v_1 v_4}$ is a tight set in $G^+$ by \Cref{ssm-torsoid}, thus also a tight set in $G$. This implies that $M_\cC^e \cap \boundary{v_1 \cup \edgefkt{v_1 v_4}}=\{x^+x^-\}$ and $|\Delta(P)\cap \boundary{v_1 \cup \edgefkt{v_1 v_4}}|=1$. Let $g$ be this unique edge. Since every edge in $\boundary{v_1 \cup \edgefkt{v_1 v_4}}$ is either $x^+x^-$ or in $\boundary{v_4}$, $g\in \boundary{v_4} \cap \boundary{v_1 \cup \edgefkt{v_1 v_4}}$, which means $g\in \boundary{v_4 \cup Z}$. Moreover, $g\in E(P)\setminus M_\cC^e$ by definition of $\Delta(P)$.
        By \Cref{remark:alternating_path} and since $x^+$ is the lower vertex of $x^+x^-$, $P$
        traverses $x^+x^-$ from $x^-$ to $x^+$, and thus enters $v_1 \cup \edgefkt{v_1v_4}$ through $x^+x^-$.
        Therefore $P$ leaves $v_1 \cup \edgefkt{v_1v_4}$ through $g$.
        By \Cref{remark:alternating_path}, $g$ contributes an endpoint in $v_4 \cup Z$ as an upper vertex of some flag, a contradiction.
        
        Therefore, $\edgefkt{v_3v_4}$ is maximal passable set between $w_3$ and $w_4$.
	\end{proof}
	
	\begin{proof}[Proof of~\Cref{lem:torsoids_in_G}]
		By definition, the tuple $\cR_\cT=(H, \edgefkt{})$ satisfies (T1), (T3) and (T4) in~\Cref{def-torsoid}.
        \Cref{T2forG} ensures that (T2) holds, and \Cref{lem:decisive,lem-decisive-even} ensure that (T7) holds, as there are only $C_4$-torsoids in $G$. Furthermore, by \Cref{ssm-torsoid}, $E(v_1,v_3)$ and $E(v_2,v_4)$ are both empty in $G^+$, then also in $G$, which implies that $E(w_1,w_3)$ and $E(w_2,w_4)$ are empty in $G$, i.e.\ (T6) holds. Therefore, it remains to prove that $\cR_\cT$ satisfies (T5).

        By \Cref{ssm-torsoid}, $E(v_2, v_3\cup \edgefkt{v_2v_3})\neq \emptyset$, then $E(w_2, w_3\cup \edgefkt{w_2w_3})\neq \emptyset$ by definition of $\cR_\cT$.
        Moreover, $E(v_2\cup \edgefkt{v_2v_3},v_3)\neq \emptyset$, then $E(w_2\cup \edgefkt{w_2w_3},w_3)\neq \emptyset$ when $w_3=v_3$. If $w_3\subset v_3$, then there exists an edge between $w_3$ and $\edgefkt{w_2w_3}\setminus \edgefkt{v_2v_3}$ as $v_3$ is a tight set. Thus $E(w_2\cup \edgefkt{(w_2w_3)},w_3)\neq \emptyset$ also holds. Similarly, $E(w_1, w_4\cup \edgefkt{w_1w_4})\neq \emptyset$ and $E(w_1\cup \edgefkt{(w_1w_4)},w_4)\neq \emptyset$.
        
        Now, we show that the set $\edgefkt{w_2w_3}$ is maximal passable between $w_2$ and $w_3$ in $w_2 \cup \edgefkt{w_2w_3}\cup w_3$.
        It's trivial when $w_3=v_3$, therefore we assume $w_3\subset v_3$.
        Since $w_2 \cup \edgefkt{w_2w_3}$ is tight in $G$ by~\Cref{lem:comparing_tight_cut_G} and $w_3 \cup \edgefkt{w_2w_3}=v_3 \cup \edgefkt{v_2v_3}$ is also tight in $G$ by \Cref{ssm-torsoid}, $\edgefkt{w_2w_3}$ is passable between $w_2$ and $w_3$.
        Suppose for a contradiction that there exists a larger passable set $Z$ between $w_2$ and $w_3$ in $w_2 \cup \edgefkt{w_2w_3}\cup w_3$, then $Z\cap w_3\neq \emptyset$ as $|w_2|=1$. Since $\edgefkt{v_2v_3}$ is the maximal passable set between $v_2$ and $v_3$ in $v_2 \cup \edgefkt{v_2v_3}\cup v_3$, $w_2\cup Z=v_2\cup Z$ cannot be tight in $G^+$. Then $\partial(w_2\cup Z)$ meets conditions in \Cref{lem:comparing_tight_cut_G}. This implies that $Z$ meets the conditions for the choice of $\edgefkt{w_2w_3}$, a contradiction to the maximality of $\edgefkt{w_2w_3}$.
        By the symmetric arguments, $\edgefkt{w_1w_4}$ is maximal passable between $w_1$ and $w_4$ in $w_1 \cup \edgefkt{w_1w_4}\cup w_4$.

        Next, we show the property of (T5) holds for $w_3\cup \edgefkt{w_3w_4}\cup w_4$.
        Let $x^+, x^- \in V(G)$ such that $w_1 = \{x^+\}$ and $w_2 = \{x^-\}$. As $G$ is matching covered, there is a perfect matching $N$ in $G$ that contains the edge $x^+x^-$. 
        Note that any edge in $\partial(w_3)$ is either in $\partial(w_2\cup \edgefkt{w_2w_3})$ or in $\partial(w_4\cup \edgefkt{w_3w_4})$.
        Since $w_2 \cup \edgefkt{w_2w_3}$ is tight in $G$, $N\cap \partial(w_2\cup \edgefkt{w_2w_3})=\{x^+x^-\}$.
        This implies that $N\cap \partial(w_3) \subset \partial(w_4\cup \edgefkt{w_3w_4})$ as $w_3$ is also tight in $G$.
        Thus, there exists an edge between $w_3$ and $w_4\cup \edgefkt{w_3w_4}$.
        By symmetry, there exists an edge between $w_4$ and $w_3\cup \edgefkt{w_3w_4}$.
        Moreover, by \Cref{lem:maximal_passable}, $\edgefkt{w_3w_4}$ is maximal passable between $w_3$ and $w_4$ in $w_3 \cup \edgefkt{w_3w_4} \cup w_4$.

        This shows that $\cR_\cT$ satisfies (T5), i.e. $\cR_\cT$ is a torsoid in $G$.
	\end{proof}

	\subsection{Interaction of torsoids}
    Let $G$ be a path-like graph without nontrivial even $2$-separations, and let $\cC$ be a path-like maximal family of nested tight cuts in $G$.
    Let $\cR_\cT=(I, \edgefkt{})$ and $\cR_{\cT'}=(I', \edgefkt{}')$ be distinct torsoids in $G$, where $V(I)=\{w_1,w_2,w_3,w_4\}$ and $V(I')=\{w_1',w_2',w_3',w_4'\}$.
    Let $w_1=\{x^+\}$, $w_2=\{x^-\}$, $w_1'=\{y^+\}$ and $w_2'=\{y^-\}$.
    
	\begin{lemma}\label{lem:interaction_torsoids_in_G}
        $\cR_{\cT}$ and $\cR_{\cT'}$ interact in the following ways:
        		\begin{enumerate}
			\item If there does not exist an $M_\cC^e$-alternating $s$--$t$~path containing the edges $x^+x^-$ and $y^+y^-$ in $G$, then $\edgefkt{w_1w_4} \cup w_1 \cup \edgefkt{w_1w_2} \cup w_2 \cup \edgefkt{w_2w_3} \subseteq \edgefkt{}'(w_3'w_4')$ and $\edgefkt{}'(w_1'w_4') \cup w_1' \cup \edgefkt{}'(w_1'w_2') \cup w_2' \cup \edgefkt{}'(w_2'w_3') \subseteq \edgefkt{w_3w_4}$.
            Furthermore, both $w_3 \cap w_4'$ and $w_4 \cap w_3'$ are empty.
			\item If there exists an $M_\cC^e$-alternating $s$--$t$~path in which the edge $x^+x^-$ precedes the edge $y^+y^-$ in $G$, then $w_1, w_2 \subseteq \edgefkt{}'(w_2'w_3') \cup w_3' \cup \edgefkt{}'(w_3'w_4')$ and $w_1', w_2' \subseteq \edgefkt{w_1w_4} \cup w_4 \cup \edgefkt{w_3w_4}$.
                Furthermore,
                \[
                \edgefkt{w_2w_3} \cup w_3 \subseteq \begin{cases}
                \edgefkt{}'(w_2'w_3') \cup w_3' & \text{ if } w_2 \subseteq \edgefkt{}'(w_2'w_3'),\\
                w_3' & \text{ if } w_2 \subseteq w_3',\\
                \edgefkt{}'(w_3'w_4') & \text{ if } w_2 \subseteq \edgefkt{}'(w_3'w_4'),
                \end{cases}
                \]
                 and
                \[
                \edgefkt{}'(w_1'w_4') \cup w_4' \subseteq \begin{cases}
                \edgefkt{w_1w_4} \cup w_4 & \text{ if } w_1' \subseteq \edgefkt{w_1w_4},\\
                    w_4 & \text{ if } w_1' \subseteq w_4,\\
                    \edgefkt{w_3w_4} & \text{ if } w_1' \subseteq \edgefkt{w_3w_4}.
                \end{cases}
                \]
		\end{enumerate}
	\end{lemma}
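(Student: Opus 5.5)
The plan is to transfer \Cref{interaction-ssm2} from $G^+$ to $G$ along the construction $\cT\mapsto\cR_\cT$. Let $\cT=(H,\edgefkt{})$ and $\cT'=(H',\edgefkt{}')$ be the torsoids of $G^+$ with $\cR_\cT$ and $\cR_{\cT'}$ as the associated torsoids of $G$. Write $V(H)=\{v_1,\dots,v_4\}$ and $V(H')=\{v_1',\dots,v_4'\}$. The construction gives the following relations:
\begin{itemize}
\item $w_1=v_1$, $w_2=v_2$ and $\edgefkt{w_3w_4}=\edgefkt{v_3v_4}$;
\item $\edgefkt{v_2v_3}\subseteq\edgefkt{w_2w_3}$ and $\edgefkt{w_2w_3}\cup w_3=\edgefkt{v_2v_3}\cup v_3$;
\item $\edgefkt{v_1v_4}\subseteq\edgefkt{w_1w_4}$ and $\edgefkt{w_1w_4}\cup w_4=\edgefkt{v_1v_4}\cup v_4$.
\end{itemize}
The same relations hold for the primed objects. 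The key point is that each statement of \Cref{interaction-ssm2} only involves unions that are invariant under moving vertices from $v_3$ into $\edgefkt{w_2w_3}$ or from $v_4$ into $\edgefkt{w_1w_4}$, or it involves the sets $\edgefkt{v_3v_4}$ and $v_1,v_2$, which are unchanged.

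Two preliminary facts are needed. First, the $M_\cC^e$-alternating $s$--$t$ paths "in $G$" appearing in the hypotheses are the same objects as the ones used in \Cref{interaction-ssm2} for $G^+$. This holds because those paths avoid $e$ anyway, so the case split is identical. Second, for case~1 the left-hand side must be enlarged. We have $\edgefkt{w_1w_4}\cup w_1\cup\edgefkt{w_1w_2}\cup w_2\cup\edgefkt{w_2w_3}$, which is contained in $\edgefkt{v_1v_4}\cup v_4\cup v_1\cup v_2\cup\edgefkt{v_2v_3}\cup v_3$. I would use \Cref{interaction-ssm2}(1) together with its "furthermore" part to place all of this inside $\edgefkt{}'(v_3'v_4')=\edgefkt{}'(w_3'w_4')$.

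This enlargement is the main obstacle. The nonobvious part is showing that $\edgefkt{w_2w_3}\setminus\edgefkt{v_2v_3}$, which is a subset of $v_3$, is disjoint from $v_3'\cup v_4'\cup\dots$. I would argue via \Cref{alternating_path_in_torsoid}. Each vertex of $v_3\cup\edgefkt{v_2v_3}$ lies on an alternating path ending at $x^-$ and avoiding $x^+x^-$. If such a vertex were in $v_3'$ or $v_4'$, then concatenating paths would give an alternating $s$--$t$ path through both $x^+x^-$ and $y^+y^-$, contradicting the case hypothesis. If the interior structure makes this concatenation awkward, I would instead apply \Cref{interaction-ssm} directly in $D_\cC^e$, where incomparability of $x$ and $y$ gives $I^x_p\cup\{i^x\}\subseteq R^y$ by the same reachability argument used there. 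The vertices of $w_3'$ and $w_4'$ are handled by the symmetric statement, and the claims $w_3\cap w_4'=\emptyset$ and $w_4\cap w_3'=\emptyset$ follow immediately from $w_i\subseteq v_i$.

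For case~2, first rewrite the memberships $w_1,w_2\subseteq\edgefkt{}'(w_2'w_3')\cup w_3'\cup\edgefkt{}'(w_3'w_4')$ as the corresponding $v$-statements of \Cref{interaction-ssm2}(2), noting that the union on the right is the same set. Then split according to where $w_2=v_2$ lies. Here a subtlety arises: $v_2\subseteq v_3'$ in $G^+$ may become $w_2\subseteq\edgefkt{}'(w_2'w_3')$ in $G$, or vice versa. To cover this, I would check each target.
\begin{itemize}
\item If $w_2\subseteq\edgefkt{}'(w_2'w_3')\cup w_3'$, then $v_2$ lies in sub-case (a) or (b) of \Cref{interaction-ssm2}. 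Both give $\edgefkt{v_2v_3}\cup v_3\subseteq\edgefkt{}'(v_2'v_3')\cup v_3'$, which is the required statement. The stronger claim for $w_2\subseteq w_3'$ requires showing $\edgefkt{w_2w_3}\cup w_3\subseteq w_3'$; in that case $v_2\subseteq v_3'$ with (b) gives $v_3\cup\edgefkt{v_2v_3}\subseteq v_3'$. The remaining task is to exclude that part of $v_3$ lands in $\edgefkt{}'(w_2'w_3')\setminus\edgefkt{}'(v_2'v_3')$. I would do this by comparing the tight cuts of \Cref{lem:comparing_tight_cut_G}, using that the three $M_\cC^e$ edges are forced and $s$ sits on the $\edgefkt{w_2'w_3'}$ side.
\item Sub-case (c) transfers directly, since $\edgefkt{w_3'w_4'}$ is unchanged.
\end{itemize}
The statements for $w_1'$ are handled symmetrically by swapping the roles of $s$ and $t$.
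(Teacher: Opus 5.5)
The gap is in case~1, at exactly the step you identify as the main obstacle. Your reduction to \Cref{interaction-ssm2} is the right frame. Case~2 and the emptiness of $w_3\cap w_4'$ and $w_4\cap w_3'$ go through as you say, since those statements only involve the invariant unions $\varepsilon(w_2w_3)\cup w_3=\varepsilon(v_2v_3)\cup v_3$ and $\varepsilon(w_1w_4)\cup w_4=\varepsilon(v_1v_4)\cup v_4$, together with unchanged sets.

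In case~1, \Cref{interaction-ssm2} gives $v_3\subseteq v_3'\cup\varepsilon'(v_3'v_4')$. What you must show is that the vertices moved from $v_3$ into $\varepsilon(w_2w_3)$ do not lie in $v_3'$. Your concatenation argument does not do this. A vertex of $v_3\cap v_3'$ lies on a directed path to $x$ and on a directed path to $y$ in $D_\cC^e$. Two paths leaving a common vertex towards $x$ and towards $y$ do not combine into an $s$--$t$ path through both. It works for $v_4'$, but that case is already covered by the furthermore clause.

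Your argument also uses nothing specific to the moved vertices, so it would give $v_3\cap v_3'=\emptyset$ for all incomparable $x,y$, and that is false. Suppose $x$ and $y$ both have in-neighbourhood exactly $\{p,q\}$, and $s\to p$ and $s\to q$ are arcs. Then $i^x=i^y=s$ and $p,q\in I^x_p\cap I^y_p$. This occurs in path-like graphs without distinguished edge and without nontrivial even $2$-separation. The same configuration refutes your fallback $I^x_p\cup\{i^x\}\subseteq R^y$. The reachability argument of \Cref{interaction-ssm} only handles $I^y\cup O^y\cup\{y\}$, because every $s$--$t$ path meeting these sets passes through $y$; nothing like this holds for $I^y_p$.

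The missing idea is to use the defining property of the enlargement. Suppose $\varepsilon(v_2v_3)\subsetneq\varepsilon(w_2w_3)$. Then $\partial(w_2\cup\varepsilon(w_2w_3))$ contains exactly the $M_\cC^e$-edges $e$, $x^+x^-$ and one further $f_i$, and every other edge of this cut has an upper vertex as its inner endpoint. By \Cref{remark:alternating_path}, an $M_\cC^e$-alternating path in $G$ can therefore leave $w_2\cup\varepsilon(w_2w_3)$ only along $x^+x^-$ or $f_i$.

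In case~1, $y^+$ and $y^-$ lie outside this set, since they lie in $\varepsilon(v_3v_4)$. An alternating path from $s$ to $y^+y^-$ cannot use $x^+x^-$, as it would extend to an $s$--$t$ path through both edges. So it leaves through $f_i$ and never re-enters. Hence $x_i$ separates $s$ from $y$ in $D_\cC^e$. By \Cref{alternating_path_in_torsoid}, every vertex of $\varepsilon'(v_2'v_3')\cup v_3'$ lies after $f_i$ on some alternating $s$--$y^-$ path, so it misses $w_2\cup\varepsilon(w_2w_3)$.

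This is \Cref{prop:interaction_helper} with the roles of $\cT$ and $\cT'$ exchanged. Combined with \Cref{interaction-ssm2}, it yields $\varepsilon(w_2w_3)\subseteq\varepsilon'(w_3'w_4')$, and symmetrically $\varepsilon(w_1w_4)\subseteq\varepsilon'(w_3'w_4')$.

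The refinement $\varepsilon(w_2w_3)\cup w_3\subseteq w_3'$ in case~2, which you only sketch, is \Cref{prop:interaction_helper} as stated. It applies because an alternating path with $x^+x^-$ before $y^+y^-$ excludes one with the reverse order.
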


    Note that $w_2 \subseteq \edgefkt{}'(w_3'w_4')$ only if $s\in \edgefkt{}'(w_3'w_4')$ and $w_1' \subseteq \edgefkt{w_3w_4}$ only if $t\in \edgefkt{w_3w_4}$ in the statement of~\cref{lem:interaction_torsoids_in_G}.

    Before we can prove~\cref{lem:interaction_torsoids_in_G}, we need the following two propositions.

    \begin{proposition}\label{prop:interaction_helper}
        If $G$ has no distinguished edge, $\edgefkt{}'(v_2'v_3') \subsetneq \edgefkt{}'(w_2'w_3')$, $w_2 \nsubseteq w_2' \cup \edgefkt{}'(w_2'w_3')$ and there does not exist an $M_\cC^e$-alternating $s$--$t$~path in which $y^+y^-$ precedes $x^+x^-$, then $\edgefkt{}'(w_2'w_3') \cap (\edgefkt{w_2w_3} \cup w_3) = \emptyset$.
    \end{proposition}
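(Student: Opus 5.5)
We want to show that $\edgefkt{}'(w_2'w_3')$ is disjoint from $\edgefkt{w_2w_3}\cup w_3$. The plan is to exploit the extra structure that the hypothesis $\edgefkt{}'(v_2'v_3') \subsetneq \edgefkt{}'(w_2'w_3')$ gives us. By the construction of $\cR_{\cT'}$, the set $Y' := w_2' \cup \edgefkt{}'(w_2'w_3')$ is tight in $G$ but not in $G^+$. So by \Cref{lem:comparing_tight_cut_G}:
\begin{itemize}
\item $\partial(Y')$ contains exactly three edges of $M_\cC^e$, namely $e$, $y^+y^-$ and one further $f_k$;
\item $s \in Y'$ and $t \notin Y'$;
\item every edge of $\partial(Y')\setminus\{e\}$ has its endpoint in $Y'$ an upper vertex.
\end{itemize}
Similarly, $Y := w_2 \cup \edgefkt{w_2w_3}$ is tight in $G$, and its complement $w_3 \cup \edgefkt{w_3w_4}\cup w_4\cup\edgefkt{w_1w_4}\cup w_1$ is tight as well. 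The set $\edgefkt{w_2w_3}\cup w_3 = (v_3\cup\edgefkt{v_2v_3})$ is tight in $G^+$ by \Cref{ssm-torsoid}, and hence tight in $G$.

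I would argue by contradiction: suppose some vertex $u$ lies in $\edgefkt{}'(w_2'w_3') \cap (\edgefkt{w_2w_3}\cup w_3)$. The first step is to pin down the position of $x^\pm$ relative to $Y'$. The hypothesis $w_2 \nsubseteq Y'$ says $x^- \notin Y'$. Since $\partial(Y')$ contains only three $M_\cC^e$-edges, $x^+$ also lies outside $Y'$ unless $x^+x^- = f_k$.

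The second step is to use the alternating-path characterisation. By \Cref{alternating_path_in_torsoid}, $u \in v_3\cup\edgefkt{v_2v_3}$ means $u$ is an internal vertex of an $M_\cC^e$-alternating path $P$ ending at $x^-$, avoiding $e$ and $x^+x^-$, and starting at ${i^x}^-$ or $s$. Extend $P$ past $x^+x^-$ and then on to $t$, using \Cref{cor:path_from_s_and_to_t} and \Cref{alternating_to_dipath}. This gives an $M_\cC^e$-alternating $s$--$t$ path $Q$ in $G$ through $u$ and then $x^+x^-$.

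Now trace how $Q$ crosses $\partial(Y')$. The path starts at $s\in Y'$ and ends at $t\notin Y'$. By the third condition on $\partial(Y')$ together with \Cref{remark:alternating_path}, $Q$ leaves $Y'$ only through $M_\cC^e$-edges and re-enters only through non-matching edges. The vertex $u$ is in $Y'$, while $x^-$ (and after it $x^+$, which comes later on $Q$) lies outside $Y'$. So $Q$ must leave $Y'$ after $u$, and this can only happen through $y^+y^-$ or $f_k$.
\begin{itemize}
\item If $Q$ uses $y^+y^-$ before $x^+x^-$, then $y^+y^-$ precedes $x^+x^-$ on an alternating $s$--$t$ path. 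This contradicts the hypothesis directly.
\item Otherwise $Q$ leaves via $f_k$. Here I would show that $\Delta(Q)$, or a suitable rerouted path, meets the tight cut $\partial(Y)$ or $\partial(Y')$ in $G$ more than once, which is impossible since $\Delta(Q)$ is a perfect matching of $G$.
\end{itemize}

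The main obstacle is this second case, excluding the exit via $f_k$. The idea I would try first is to compare with the analogous triple $\{e, x^+x^-, f_{k'}\}$ for $\partial(Y)$ when $\edgefkt{w_2w_3}\neq\edgefkt{v_2v_3}$. Then apply \Cref{crossing_tight_sets} to $Y'$ and the tight set $v_3\cup\edgefkt{v_2v_3}$, whose intersection contains $u$: if the intersection is odd, the union and intersection are tight in $G$. Combining this with \Cref{lem:comparing_tight_cut_G} should force $y^+y^-$ to lie inside $\edgefkt{v_2v_3}\cup v_3$, i.e.\ reachable before $x^+x^-$. That again yields an alternating $s$--$t$ path in which $y^+y^-$ precedes $x^+x^-$, a contradiction.

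If the intersection is even, I would instead apply the lemma to the complement of $Y'$. The absence of a distinguished edge is what guarantees that $s$ and $t$ are not adjacent, which keeps the parity bookkeeping consistent in these steps.
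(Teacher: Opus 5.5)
Your setup is sound. With $Y':=w_2'\cup\varepsilon'(w_2'w_3')$ you correctly get $\partial(Y')\cap M_\cC^e=\{e,y^+y^-,f_k\}$ with upper endpoints in $Y'$. An alternating path can leave $Y'$ only through one of these matching edges, and you correctly dismiss the case where $Q$ leaves through $y^+y^-$ before $x^+x^-$. (Also, $x^+x^-=f_k$ is impossible, since its endpoint in $Y'$ would then be the lower vertex $x^+$.)

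But the remaining case, $Q$ leaving through $f_k$ after $u$, carries all the content, and neither of your suggested routes closes it. Counting how often $\Delta(Q)$ meets $\partial(Y')$ does not help: in this case the count comes out as exactly one, which is consistent with $\Delta(Q)$ being a perfect matching of $G$. The sketch via \Cref{crossing_tight_sets} and the parity of $Y'\cap(v_3\cup\varepsilon(v_2v_3))$ gives no mechanism that would force $y^+y^-$ into $v_3\cup\varepsilon(v_2v_3)$, and the remark about $s,t$ being non-adjacent does not supply one.

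There is a second omission. \Cref{alternating_path_in_torsoid} only covers $u\notin\{s,t\}$, yet $s\in\varepsilon'(w_2'w_3')$ always holds. So you must separately exclude $s\in\varepsilon(w_2w_3)\cup w_3$, i.e.\ $i^x=s$; without this the statement would be false.

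The missing idea is to apply your exit argument to \emph{every} $M_\cC^e$-alternating $s$--$x^+x^-$ path, not just to $Q$. Each such path starts in $Y'$ and must leave $Y'$ before reaching $x^-\notin Y'$. It cannot use $e$, and by hypothesis it cannot use $y^+y^-$, so it uses $f_k$. Thus $x_k$ separates $s$ from $x$ in $D_\cC^e$. Separating vertices appear in the same order on all $s$--$x$ paths of the acyclic $D_\cC^e$, and $i^x$ is the one closest to $x$. Hence $i^x\neq s$, which already gives $s\notin v_3\cup\varepsilon(v_2v_3)$, and $x_k$ equals or precedes $i^x$ on every $s$--$x$ path.

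Moreover, after traversing $f_k$ such a path can never re-enter $Y'$: it would then have to leave again through $y^+y^-$ before reaching $x^+x^-$. Every vertex of $v_3\cup\varepsilon(v_2v_3)$ is an internal vertex of an alternating path from ${i^x}^-{i^x}^+$ to $x^-$. It is therefore reached only after $f_k$ has been traversed, and so lies outside $Y'$.

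These are exactly the paper's two claims: every $s$--$x^+x^-$ path contains $f_k$ and avoids $y^+y^-$, and every $x_k^+$--$x^+x^-$ path in $G-f_k$ avoids $Y'$. In your own terms, $Q$ passes $i^x$ before $u$, hence passes $f_k$ before $u$, so the case you could not handle never arises.
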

    \begin{proof}
        By the definition of $\cR_{\cT'}$, we have that $s \in \edgefkt{}'(w_2'w_3')$, $\boundary{w_2' \cup \edgefkt{}'(w_2'w_3')}$ contains precisely three edges of $M_\cC^e$ and for every edge $f \in \boundary{w_2' \cup \edgefkt{w_2'w_3'}} \setminus \{e\}$, the endpoint of $f$ in $w_2' \cup \edgefkt{w_2'w_3'}$ is an upper vertex of some flag since $\edgefkt{}'(v_2'v_3') \subsetneq \edgefkt{}'(w_2'w_3')$. 
        Note that both $e$ and $y^+y^-$ are in $\boundary{w_2' \cup \edgefkt{}'(w_2'w_3')} \cap M_\cC^e$.
        Let $f_i=x_i^+x_i^-$ be the third edge in $\boundary{w_2' \cup \edgefkt{}'(w_2'w_3')} \cap M_\cC^e$, then the upper vertex $x_i^- \in w_2' \cup \edgefkt{}'(w_2'w_3')$.

        \begin{claim}\label{clm:helper1}
            Every $M_\cC^e$-alternating $s$--$x^+x^-$~path in $G$ contains $f_i$ and avoids $y^+y^-$.
        \end{claim}
        \begin{claimproof}
            Let $P$ be an arbitrary $M_\cC^e$-alternating $s$--$x^+x^-$~path in $G$.
            Since $w_2 \nsubseteq w_2' \cup \edgefkt{}'(w_2'w_3')$, both $x^+$ and $x^-$ are not in $w_2' \cup \edgefkt{}'(w_2'w_3')$. Note that $s \in \edgefkt{}'(w_2'w_3')$, then \Cref{remark:alternating_path} ensures that $P$ interacts $\boundary{w_2' \cup \edgefkt{}'(w_2'w_3')}\cap (M_\cC^e\setminus \{e\})$.
            Since there is no $M_\cC^e$-alternating $s$--$t$~path in which $y^+y^-$ precedes $x^+x^-$, $P$ avoids $y^+y^-$.
            This implies that $P$ contains $f_i$.
        \end{claimproof}
        \begin{claim}\label{clm:helper2}
            Every $M_\cC^e$-alternating $x_i^+$--$x^+x^-$~path in $G-f_i$ avoids $w_2' \cup \edgefkt{}'(w_2'w_3')$.
        \end{claim}
        \begin{claimproof}
            Let $Q$ be an arbitrary $M_\cC^e$-alternating $x_i^+$--$x^+x^-$~path in $G-f_i$.
            Furthermore, let $P'$ be some $M_\cC^e$-alternating $s$--$f_i$~path.
            Then $P'$ and $Q$ concatenate to an $M_\cC^e$-alternating $s$--$x^+x^-$~path.
            By~\cref{clm:helper1} and \Cref{remark:alternating_path}, the concatenation of $P'$ and $Q$ leaves $w_2' \cup \edgefkt{}'(w_2'w_3')$ only through $f_i$.
            Thus we can deduce that $Q$ does not intersect the set $w_2' \cup \edgefkt{}'(w_2'w_3')$ since its endvertices are not contained in $w_2' \cup \edgefkt{}'(w_2'w_3')$.
        \end{claimproof}
        
        \Cref{clm:helper1} implies that there is no $M_\cC^e$-alternating $s$--$x^+x^-$~path in $G-f_i$.
        Then by the definition of $i^x$ in \Cref{def-partition-ssm}, \Cref{obs:PX_properties,alternating_to_dipath,alternating_path_in_torsoid}, any $M_\cC^e$-alternating $s$--$f_i$~path in $G$ contains ${i^x}^-{i^x}^+$, and $i^x\neq s$.

        By \Cref{alternating_path_in_torsoid}, any vertex $u$ in $\edgefkt{v_2v_3} \cup v_3$ is in some $M_\cC^e$-alternating ${i^x}^-{i^x}^+$--$x^-$~path in $G$ that avoids $x^+x^-$, which implies that $u$ is in some $M_\cC^e$-alternating $x_i^+$--$x^+x^-$~path.
        Furthermore, \Cref{clm:helper2} shows that $\edgefkt{v_2v_3} \cup v_3$ does not intersect $w_2' \cup \edgefkt{w_2'w_3'}$.
        Thus $\edgefkt{w_2w_3} \cup w_3$  does not intersect $w_2' \cup \edgefkt{w_2'w_3'}$ by the construction of $\cR_\cT$.
    \end{proof}

    \begin{proposition}\label{prop:interaction_helper_2}
        If $G$ has no distinguished edge, and there does not exist an $M_\cC^e$-alternating $s$--$t$~path which contains $y^+y^-$ and $x^+x^-$, then $\edgefkt{}'(w_2'w_3') \subseteq \edgefkt{w_3w_4}$ when $\edgefkt{}'(v_2'v_3') \subsetneq \edgefkt{}'(w_2'w_3')$, and $\edgefkt{}'(w_1'w_4') \subseteq \edgefkt{w_3w_4}$ when $\edgefkt{}'(v_1'v_4') \subsetneq \edgefkt{}'(w_1'w_4')$.
    \end{proposition}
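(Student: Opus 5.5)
By the symmetry that swaps $s$ with $t$, upper with lower vertices, and the pair $(v_2',\edgefkt{}'(v_2'v_3'))$ with $(v_1',\edgefkt{}'(v_1'v_4'))$, it suffices to prove the first inclusion. So assume $\edgefkt{}'(v_2'v_3') \subsetneq \edgefkt{}'(w_2'w_3')$.

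The construction of $\cR_{\cT'}$ then gives three facts. First, $s \in \edgefkt{}'(w_2'w_3')$. Second, $\boundary{w_2' \cup \edgefkt{}'(w_2'w_3')}$ meets $M_\cC^e$ in exactly three edges: $e$, $y^+y^-$, and a third edge $f_i = x_i^+x_i^-$ whose upper vertex $x_i^-$ lies inside. Third, every other boundary edge has its inner endpoint an upper vertex of some flag.

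The goal is to place every vertex $u \in \edgefkt{}'(w_2'w_3')$ into $\edgefkt{v_3v_4}$ of the torsoid $\cT$ of $G^+$, which equals $\edgefkt{w_3w_4}$. By \Cref{ssm-torsoid} and \Cref{alternating_path_in_torsoid}, the complement of $\edgefkt{v_3v_4}$ in $V(G)$ is
\[
v_1 \cup v_2 \cup v_3 \cup v_4 \cup \edgefkt{v_2v_3} \cup \edgefkt{v_1v_4}.
\]
Every vertex of this complement other than $s,t$ is an internal vertex, or an endpoint, of an $M_\cC^e$-alternating path ending at $x^-$ or starting at $x^+$. Such a path extends, via \Cref{cor:path_from_s_and_to_t}, to an $M_\cC^e$-alternating $s$--$t$~path in $G-e$ through $x^+x^-$. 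So it suffices to show that no $M_\cC^e$-alternating $s$--$t$~path through $x^+x^-$ meets $\edgefkt{}'(w_2'w_3')$ at any vertex other than $s$. Separately, $s$ and $t$ themselves must be shown to lie in $\edgefkt{v_3v_4}$, i.e.\ that $i^x \neq s$ and $o^x \neq t$.

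\textbf{Step 1: $x^\pm \notin w_2' \cup \edgefkt{}'(w_2'w_3')$.} Suppose $x^+x^-$ were inside or crossing this set. By \Cref{remark:alternating_path}, any alternating $s$--$t$~path starts at $s$ inside the set and can leave it only through $M_\cC^e$-edges, namely $y^+y^-$ or $f_i$. By the third fact, any non-matching boundary edge is traversed inward. A path that reaches $x^+x^-$ inside the set must still leave through $y^+y^-$ or $f_i$. If it leaves through $y^+y^-$, that path contains both $x^+x^-$ and $y^+y^-$, contradicting the hypothesis. So it always leaves through $f_i$. I expect to push this, via a matching-covered argument through $x^+x^-$ together with \Cref{prop:alternating_s_t_paths}, to a contradiction.

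\textbf{Step 2: paths through $x^+x^-$ avoid the set after $s$.} This follows the pattern of \cref{clm:helper1} and \cref{clm:helper2}. Any alternating $s$--$t$~path $P$ through $x^+x^-$ leaves $w_2'\cup\edgefkt{}'(w_2'w_3')$ exactly once via an $M_\cC^e$-edge. By hypothesis that edge is not $y^+y^-$, so it is $f_i$. Once outside, the path cannot re-enter, since re-entering would require crossing a non-matching edge inward and then leaving again via $y^+y^-$ or $f_i$; a second use of $f_i$ is impossible and $y^+y^-$ is forbidden. Hence the vertices of $P$ inside the set form exactly the initial segment $s \dots x_i^-$. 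It follows that every $s$--$x^-$ alternating path passes through $f_i$, so $i^x \neq s$, with $i^x$ at or after $x_i$. All vertices of $v_3 \cup \edgefkt{v_2v_3}$ then lie on $i^x$--$x$ paths strictly after $f_i$, hence outside the set.

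\textbf{Step 3 and the main obstacle.} Vertices of $v_4 \cup \edgefkt{v_1v_4}$ lie on $x^+$--$t$ paths after $x^+x^-$, so they are outside the set by Step 2. It remains to show $o^x \neq t$, which is needed so that $t \in \edgefkt{v_3v_4}$. I expect this to be the main obstacle. If $o^x = t$, the vertex $t$ would belong to $v_4$, and its neighbours inside $\edgefkt{}'(w_2'w_3')$ (note $t \notin$ that set, since $e$ crosses the cut) are not obviously excluded. My first attempt would be to use that $G$ has no distinguished edge, so that $t$ is not adjacent to $s$, combined with the same path-exit argument applied to alternating paths ending at $t$. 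Failing that, I would invoke \Cref{lem:comparing_tight_cut_G} to show that $w_2'\cup\edgefkt{}'(w_2'w_3')$ together with $x$'s side would yield a tight cut contradicting the maximality used in \Cref{lem:maximal_passable}.
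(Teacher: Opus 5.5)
You have the right skeleton: your Step 2 exit argument is sound (it is essentially the two claims inside the proof of \Cref{prop:interaction_helper}). Once $x^\pm\notin W:=w_2'\cup\varepsilon'(w_2'w_3')$ is known, it gives $i^x\neq s$ and puts $v_1\cup v_2\cup v_3\cup v_4\cup\varepsilon(v_2v_3)\cup\varepsilon(v_1v_4)$ outside $W$. The gap is Step 1, which you leave open, and the route you sketch cannot close it. A path that passes $x^+x^-$ inside $W$ and then leaves through $f_i$ is consistent with the three boundary facts. A perfect matching through $x^+x^-$ only produces such a path, so the cut structure of $W$ gives you nothing to contradict.

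The missing ingredient is that $W$ lies in the in-side of $\mathcal{T}'$, i.e.\ $W\subseteq v_2'\cup\varepsilon'(v_2'v_3')\cup v_3'$. Since $x^-$ is an upper vertex other than $y^-$, the description in \Cref{ssm-torsoid} shows that $x^-\in W$ would force $x\in I^y\cup I^y_p$. Then $x$ is an internal vertex of a directed $i^y$--$y$ path by \Cref{obs:PX_properties}. Extending this path with \Cref{cor:path_from_s_and_to_t} and translating with \Cref{alternating_to_dipath} gives an $M_{\mathcal{C}}^e$-alternating $s$--$t$ path through $x^+x^-$ and then $y^+y^-$, contradicting the hypothesis. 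Moreover, $x^+\in W$ forces $x^-\in W$: otherwise $x^+x^-\in\partial W\cap M_{\mathcal{C}}^e=\{e,y^+y^-,f_i\}$, which is impossible because $f_i$ has its upper endpoint in $W$. This is how the paper obtains $w_2\nsubseteq w_2'\cup\varepsilon'(w_2'w_3')$ before applying \Cref{prop:interaction_helper}. The same observation also handles everything after $x$ more cheaply than your exit argument. A vertex $u$ common to $\varepsilon'(w_2'w_3')$ (on a path into $y$) and to $v_1\cup v_2\cup v_4\cup\varepsilon(v_1v_4)$ (on a path out of $x$) would give a path $x\to u\to y$.

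Your Step 3 obstacle, by contrast, is not a real obstacle. The inclusion only concerns elements of $\varepsilon'(w_2'w_3')$, and $t$ is not one of them; as you note yourself, $e\in\partial W$ and $s\in W$. If $o^x=t$, then $v_4=\{t\}\cup\bigcup_{x_j\in O^x_p}\{x_j^+,x_j^-\}$. The $O^x_p$-part consists of internal vertices of $x$--$t$ paths, which Step 2 already covers. So drop the goal $o^x\neq t$ rather than attacking it via \Cref{lem:comparing_tight_cut_G}: you do not need it, and nothing in the hypotheses speaks to the $t$-side of $x$. Only $i^x\neq s$ is needed, to place $s$ in $\varepsilon(v_3v_4)$.

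Finally, your reduction sentence is misstated. Every alternating $s$--$t$ path through $x^+x^-$ does meet $\varepsilon'(w_2'w_3')$ beyond $s$, namely along its initial segment up to $x_i^-$. What you need, and what Step 2 actually establishes once Step 1 is in place, is that the part of such a path from ${i^x}^+$ onwards avoids $W$.
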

    \begin{proof}
        We first assume that $\edgefkt{}'(v_2'v_3') \subsetneq \edgefkt{}'(w_2'w_3')$ and show that $\edgefkt{}'(w_2'w_3') \subseteq \edgefkt{w_3w_4}$.
        
        Since there does not exist an $M_\cC^e$-alternating $s$--$t$~path which contains $y^+y^-$ and $x^+x^-$, $w_2 \nsubseteq w_2' \cup \edgefkt{}'(w_2'w_3')$ by \Cref{obs:PX_properties}-\Cref{obs:PX1}, \Cref{alternating_to_dipath,ssm-torsoid}.
        Thus we can apply~\cref{prop:interaction_helper}, which shows that $\edgefkt{}'(w_2'w_3') \subseteq \edgefkt{w_3w_4} \cup w_4 \cup \edgefkt{w_1w_4} \cup w_1 \cup \edgefkt{w_1w_2} \cup w_2$.

        By \Cref{alternating_path_in_torsoid}, every vertex in $\edgefkt{}'(w_2'w_3')$ is in some $M_\cC^e$-alternating $s$--$y^+y^-$ path $P$, and every vertex in $w_4 \cup \edgefkt{w_1w_4} \cup w_1 \cup \edgefkt{w_1w_2} \cup w_2$ is in some $M_\cC^e$-alternating $x^+x^-$--$t$ path $P'$.
        Since there does not exist an $M_\cC^e$-alternating $s$--$t$~path which contains both $y^+y^-$ and $x^+x^-$, $V(P)\cap V(P')=\emptyset$. This implies that the set $\edgefkt{}'(w_2'w_3')$ does not intersect $w_4 \cup \edgefkt{w_1w_4} \cup w_1 \cup \edgefkt{w_1w_2} \cup w_2$.
        Thus $\edgefkt{}'(w_2'w_3') \subseteq \edgefkt{w_3w_4}$.


        By symmetry, we can deduce that $\edgefkt{}'(v_1'v_4') \subsetneq \edgefkt{}'(w_1'w_4')$ implies that $\edgefkt{}'(w_1'w_4') \subseteq \edgefkt{w_3w_4}$.
    \end{proof}

	\begin{proof}[Proof of~\cref{lem:interaction_torsoids_in_G}]
        If $G$ contains a distinguished edge, then~\cref{interaction-ssm2} implies the statement of~\cref{lem:interaction_torsoids_in_G}.
        Thus we can assume that $G$ does not contain a distinguished edge.
	    We apply \cref{interaction-ssm2} to $\cT$ and $\cT'$ in $G^+$ and deduce that $w_1, w_2 \subset \edgefkt{}'(w_2'w_3') \cup w_3' \cup \edgefkt{}'(w_3'w_4')$ by the definition of $\cR_\cT$ and $\cR_{\cT'}$.
        Note that there exists an $M_\cC^e$-alternating $s$--$t$~path containing the edges $x^+x^-$ and $y^+y^-$ in $G$ if and only if such a path exists in $G^+$.
        
        If there does not exist an $M_\cC^e$-alternating $s$--$t$~path containing the edges $x^+x^-$ and $y^+y^-$, then $w_1 \cup \edgefkt{w_1w_2} \cup w_2 \subseteq \edgefkt{}'(w_3'w_4')$, $ w_1' \cup \edgefkt{}'(w_1'w_2') \cup w_2'\subseteq \edgefkt{w_3w_4}$ and, both, $w_3 \cap w_4'$ and $w_4 \cap w_3'$ are empty, by \cref{interaction-ssm2} and definition of $\cR_{\cT}$ and $\cR_{\cT'}$.
        
        If $\edgefkt{}'(w_1'w_4')=\edgefkt{}'(v_1'v_4')$, then $\edgefkt{}'(w_1'w_4') \subseteq \edgefkt{}(w_3w_4)$ by \cref{interaction-ssm2} and definition of $\cR_\cT$ and $\cR_{\cT'}$.
        Otherwise, $\edgefkt{}'(v_1'v_4') \subsetneq \edgefkt{}'(w_1'w_4')$ and~\cref{prop:interaction_helper_2} implies $\edgefkt{}'(w_1'w_4') \subseteq \edgefkt{}(w_3w_4)$.
        By the same argument, $\edgefkt{}'(w_2'w_3') \subseteq \edgefkt{}(w_3w_4)$ and
        $\edgefkt{w_1w_4} \cup \edgefkt{w_2w_3} \subseteq \edgefkt{}'(w_3'w_4')$.

        Now we assume that there exists an $M_\cC^e$-alternating $s$--$t$~path in which the edge $x^+x^-$ precedes the edge $y^+y^-$.
        Then $w_1, w_2 \subset \edgefkt{}'(w_2'w_3') \cup w_3' \cup \edgefkt{}'(w_3'w_4')$ and $w_1', w_2' \subset \edgefkt{w_1w_4} \cup w_4 \cup \edgefkt{w_3w_4}$ by \Cref{alternating_path_in_torsoid} and the definition of $\cR_\cT$ and $\cR_{\cT'}$.
        Up to symmetry, it suffices to prove the first three cases.
        \begin{description}
            \item[If $w_2 \subset \edgefkt{}'(w_2'w_3')$:] Then $v_2 \subset \edgefkt{}'(v_2'v_3') \cup v_3'$.
            Thus \cref{interaction-ssm2} implies $\edgefkt{w_2w_3} \cup w_3 \subseteq \edgefkt{}'(w_2' w_3') \cup w_3'$.
            \item[If $w_2 \subset w_3'$:] Then either $w_3'=v_3'$ or $\edgefkt{}'(v_2'v_3') \subsetneq \edgefkt{}'(w_2'w_3')$.
            In the former case, \cref{interaction-ssm2} implies $\edgefkt{w_2w_3} \cup w_3 \subseteq w_3'$.
            In the latter case, we apply \cref{prop:interaction_helper}.
            Since there is an $M_\cC^e$-alternating $s$--$t$~path in which $x^+x^-$ precedes $y^+y^-$, by \Cref{ob_path-like}-\Cref{obs:order_of_path} and \Cref{ob_flag}, there does not exist an $M_\cC^e$-alternating $s$--$t$~path in which $y^+y^-$ precedes $x^+x^-$.
            Thus \cref{prop:interaction_helper} shows that $(\edgefkt{w_2w_3} \cup w_3) \cap \edgefkt{}'(w_2'w_3') = \emptyset$.
            By \cref{interaction-ssm2} and the definition of $\cS_\cT$ and $\cS_{\cT'}$, $\edgefkt{w_2w_3} \cup w_3 \subseteq w_3' \cup \edgefkt{}'(w_2'w_3')$ and thus $\edgefkt{w_2w_3} \cup w_3 \subseteq w_3'$.
            \item[If $w_2 \subset \edgefkt{}'(w_3'w_4')$:] Then $\edgefkt{w_2w_3} \cup w_3 \subseteq \edgefkt{}'(w_3'w_4')$ by \cref{interaction-ssm2} and the definition of $\cR_\cT$ and $\cR_{\cT'}$. \qedhere
        \end{description}
	\end{proof}

	\section*{Acknowledgements}
	
	The second author gratefully acknowledges support by a doctoral scholarship of the Studienstiftung des deutschen Volkes.
	The third author gratefully acknowledges support by China Scholarship Council (No.~202206770031) and Sino-German (CSC-DAAD) Postdoc Scholarship Program, 2022 (57607866).\\
	All authors contributed equally to this work.
	
	\bibliographystyle{alpha}
	\bibliography{reference}
	
\end{document}